\documentclass[12pt,reqno]{amsart}
\advance \topmargin by -\headheight
\advance \topmargin by -\headsep
\evensidemargin \oddsidemargin
\newtheorem{theorem}{Theorem}[section]
\newtheorem{lemma}[theorem]{Lemma}
\newtheorem{corollary}[theorem]{Corollary}
\newtheorem{proposition}[theorem]{Proposition}

\usepackage{amsmath, amsthm, amssymb, amsfonts}
 \usepackage{hyperref}
\usepackage{enumerate}
\usepackage{url}
\usepackage{dsfont}
\usepackage{mathrsfs}

\usepackage{bm}

\usepackage{xcolor}

\usepackage{fancyvrb}

\theoremstyle{definition}

\theoremstyle{remark}

\numberwithin{equation}{section}

\DeclareMathOperator{\Mod}{mod}
\newcommand{\mmod}[1]{\;(\Mod{ #1})}

\renewcommand{\geq}{\geqslant}
\renewcommand{\leq}{\leqslant}

\def\eps{\varepsilon}

\def \cC {\mathcal C}

\def \cE {\mathcal E}

\def \cM {\mathcal M}

\def \fS {\mathfrak S}

\def \N {\mathbb{N}}

\def \R {\mathbb{R}}

\def \Z {\mathbb{Z}}

\def\d{{\,{\rm d}}}

\def \Li {\mathrm{Li}}

\newcommand{\n}[1]{\|{#1}\|}
\newcommand\1{\mathds{1}}

\renewcommand\vec{\bm}

\title{Counting solutions to the quadratic determinant equation}

\author{Jonathan Chapman \and Akshat Mudgal}
\address{Mathematics Institute, Zeeman Building, University of Warwick, Coventry CV4 7AL, United Kingdom}
\email{Jonathan.Chapman@warwick.ac.uk}
\address{Mathematics Institute, Zeeman Building, University of Warwick, Coventry CV4 7AL, United Kingdom}
\email{Akshat.Mudgal@warwick.ac.uk}

 \subjclass[2020]{11D45 (primary); 11D09, 11N37 (secondary)} 
\keywords{Quadratic determinant equation, restricted divisor correlations}

\begin{document}

\begin{abstract}
Given $h, N \in \mathbb{N}$ satisfying $1 \leqslant h \leqslant N^2$, we prove an asymptotic formula for the number of solutions to the equation $x_1 x_2 - x_3 x_4 = h$ with $x_1, \ldots, x_4 \in [-N,N] \cap \mathbb{Z}$. We use a combination of combinatorial and analytic arguments in physical space along with bounds for Kloosterman sums. Our main result concerns the case when $h = N^2 + O(N)$, wherein we obtain square-root cancellation error terms by bypassing Kloosterman sum bounds and exploiting an additional symmetry available in this setting via Ramanujan sums. This confirms a speculation of Dhanda--Haynes--Prasala in a very general form.
\end{abstract}

\maketitle

\section{Introduction}

Many classical topics in analytic number theory concern counting solutions to $Q(\vec{x}) = h$, where $h$ is some positive integer, $Q \in \mathbb{Z}[x_1, x_2, x_3, x_4]$ is some quadratic form and $\vec{x} = (x_1, \dots, x_4)$ varies over lattice points in some expanding region. This is especially true when $Q$ satisfies $Q(\vec{x}) = x_1 x_4 - x_2 x_3$. For instance, a well known result of Selberg implies that for all $N \in \mathbb{N}$, one has
\begin{equation} \label{eqn1.1}
|\{ \vec{x} \in \mathbb{Z}^4:   Q(\vec{x}) = 1 \ \text{and}  \ x_1^2 + \dots + x_4^2 \leq N^2\}| = 6N^2 + O(N^{4/3}). 
\end{equation}
The error term here hasn't been improved since Selberg's result and it is conjectured the error term should of the order $O_{\eps}(N^{1 + \eps})$, see \cite{Iw2002}. Similarly,  the binary additive divisor problem concerns estimates for the quantity
\begin{equation} \label{eqn1.2}
|\{ \vec{x} \in \mathbb{N}^4 : Q(\vec{x}) = h \ \text{and}  \ x_3 x_4 \leq N^2  \}| = \sum_{1 \leq n \leq N^2} d(n) d(n+h)
\end{equation}
for positive integers $h, N$, where $d(n) = \sum_{x,y \in \mathbb{N}}\1_{n = xy}$ is the divisor function. This is a very well-studied problem, see \cite{DI1982, Meu2001, Mo1994} and the references therein, in part due to its close connections to the fourth moment of the Riemann zeta function on the half line \cite{HB1979}. 

In this paper, we are interested in analysing the quantity
\[ T(h,N) = |\{ \vec{x} \in \mathbb{Z}^4 : Q(\vec{x}) = h \ \text{and} \ \vec{x} \in [-N,N]^4 \}| = \sum_{|n| \leq N^2} d'(n) d'(n+h), \]
where $d'(n) = \sum_{x,y \in [-N,N]} \1_{n = xy}$ can be interpreted as a restricted version of the divisor function. This specific setting has seen significant recent activity, see \cite{Af2024, CM2025, CM2025b, DHP2025, GG2024, martin}. Since this is a problem concerning a quadratic form in four variables with  sharp cut-offs instead of smooth weights and with $h \in [1,2N^2]$ being quite arbitrary,  the circle method does not seem to give an asymptotic formula for $T(h,N)$ with strong error terms and uniformity in $h$, see \cite{Nie2010}. Nevertheless, the circle method heuristic would predict that the main term contribution should be of the order $N^2$ with the leading coefficient being a product of local densities and the real density. Thus, for every prime $p$, we define the $p$-adic density 
\[ \sigma_p(h) = \lim_{n \to \infty}p^{-3n} \{ \vec{x} \in (\mathbb{Z}/p^n \mathbb{Z})^4 : Q(\vec{x}) \equiv h \mmod{p^n} \}. \]
 Similarly, we also define the real density $\sigma_{\infty}(h/N^2)$ and the singular series $\mathfrak{S}_h$ to be
\[ \sigma_{\infty}(h/N^2) = \lim_{\eta \to 0^+} (2\eta)^{-1} \int_{[-1,1]^4} \1_{|Q(\vec{x}) - h/N^2| < \eta } \d \vec{x} \ \ \text{and}  \ \ \mathfrak{S}_h = \prod_{p \ \text{prime}} \sigma_p(h). \]
With this in hand, the circle method heuristic would predict that $T(h,N) \sim \sigma_{\infty}(h/N^2) \mathfrak{S}_h N^2$.

Despite the aforementioned issues with sharp cut-offs and $h \in [1,2N^2]$ being arbitrary, we are able to prove the above heuristic quantitatively by using a combination of combinatorial and analytic arguments in physical space along with bounds for Kloosterman sums.

\begin{theorem} \label{thm1.1}
    For any $h, N \in \mathbb{N}$, the quantities $\sigma_{\infty}(h/N^2)$ and $\mathfrak{S}_h$ exist, and 
    \[ T(h,N) = \sigma_{\infty}(h/N^2) \mathfrak{S}_h N^2 + O_{\eps}(N^{3/2 + \eps}).   \]
\end{theorem}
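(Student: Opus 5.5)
We count solutions to $x_1x_2 - x_3x_4 = h$ in $[-N,N]^4$ by fixing one pair of variables and counting the other pair as lattice points in a congruence class. Note that $|x_1x_2|,|x_3x_4|\leq N^2$ forces $h\leq 2N^2$; for larger $h$ both sides vanish trivially. By the symmetries $x_i\mapsto -x_i$ we may restrict to sign patterns. The contribution from solutions with some $x_i = 0$ is easily $O(N^{1+\eps})$ via the divisor bound, so we may assume all $x_i\neq 0$.

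We fix $x_3,x_4$ with $x_3x_4 = m$, so that $x_1x_2 = h+m$. It is more convenient to parametrise differently. Fix $x_1 = a$ with $1\leq |a|\leq N$. Then $x_3x_4\equiv -h \pmod{a}$, and $x_2 = (x_3x_4+h)/a$ must lie in $[-N,N]$. So for each $a$ we count pairs $(x_3,x_4)\in[-N,N]^2$ with $x_3x_4\equiv -h\pmod a$ inside the region $|x_3x_4+h|\leq N|a|$. This region is bounded by hyperbolas.

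Using the dyadic structure and the symmetry $x_1\leftrightarrow x_2$, we restrict to $|a|\leq |x_2|$, so that $|a|\leq \sqrt{2}N$ is effectively the smaller variable. For each $x_3$ with $\gcd(x_3,a)\mid h$, the condition on $x_4$ is a congruence modulo $a/\gcd(x_3,a)$ in an interval $I_a(x_3)$ whose endpoints are piecewise smooth in $x_3$. The count equals the expected density times $|I_a(x_3)|$ plus a sawtooth error $\psi$.

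The main terms, summed over $x_3$ and $a$, give $\sum_a a^{-1}\rho_h(a)\cdot\mathrm{vol}$-type expressions. Here $\rho_h(a)$ counts solutions of $yz\equiv -h$ modulo $a$, normalised by $a$; this is a Ramanujan-sum-type multiplicative function. We then evaluate $\sum_{a\leq X}\rho_h(a)/a$ asymptotically with an error of $O(X^{\eps})$ uniformly in $h$. Recombining by partial summation against the real-density integral yields $\sigma_\infty(h/N^2)\mathfrak{S}_hN^2$. Existence of $\sigma_p(h)$ and $\sigma_\infty$ follows by a direct Hensel/local computation and from the explicit integral, respectively.

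The main obstacle is the sawtooth error $\sum_a\sum_{x_3}\psi\big((\text{endpoint} - \bar{x_3}(-h))/a\big)$. Trivially this is $O(N^2)$. We expand $\psi$ in a truncated Fourier series of length $H$, at a cost of $O(N^2/H)$ after averaging. The resulting sums over $x_3$ in intervals modulo $a$ are incomplete Kloosterman sums $\sum_{x} e\big((k\bar{x}(-h)+\ldots)/a\big)$. To handle the smoothly varying endpoint, we split $x_3$ into short intervals on which the endpoint is nearly linear, or we use partial summation. The Weil bound then gives $\ll a^{1/2+\eps}\gcd(k,h,a)^{1/2}$ per complete sum. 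Summing over $k\leq H$, $a\leq N$, and the $O(N/a+1)$ completions per $a$ gives roughly $H N^{3/2+\eps}$-type bounds. This is too large when combined naively, so we balance as follows. For small $a\leq N^{1/2}$, we use the trivial count per residue class, with error $O(N/a\cdot a)$ summing to $O(N^{3/2})$. For larger $a$, Poisson/completion combined with Weil gives a total of $\sum_{a} a^{1/2+\eps}\ll N^{3/2+\eps}$, with the gcd factors absorbed by the divisor bound. Choosing the parameters so that the sawtooth tail and the Kloosterman contribution both equal $N^{3/2+\eps}$ gives the theorem.
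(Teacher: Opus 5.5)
\paragraph{Main gap: the sawtooth error term.} This term is exactly where the exponent $3/2$ is decided, and your proposal does not establish the $O_\eps(N^{3/2+\eps})$ bound.

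In your parametrisation you fix $x_1=a$ and $x_3$, and count $x_4$ in a class modulo $a'=a/g$, with $g=(a,x_3)$, $x_3'=x_3/g$, $h=gh'$. The endpoint of $I_a(x_3)$ coming from $|x_2|\leq N$ is $(N|a|-h)/x_3$. It is not an integer and it moves with $x_3$. After Fourier expansion you therefore face sums $\sum_{x_3} e\big(k\frac{N|a|-h}{x_3a'}\big)e\big(kh'\overline{x_3'}/a'\big)$. These are incomplete Kloosterman sums twisted by a phase whose variation over a range $x_3\asymp N$ can be of order $kN/a$.

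Partial summation, or splitting into short intervals, loses a positive power of the frequency $k$:
\begin{itemize}
\item Your own count $HN^{3/2+\eps}$, balanced against the truncation loss $N^2/H$, gives only $N^{7/4}$.
\item Even splitting into intervals on which the phase is essentially linear seems to give about $N^{3/2}H^{1/2}$ (heuristically), hence roughly $N^{5/3}$.
\end{itemize}

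Your final ``balancing'' does not repair this. The claim that large $a$ contribute $\sum_a a^{1/2+\eps}$ presupposes a bound $a^{1/2+\eps}$ per modulus. That is not what completion gives: the $x_3$-range has length $\asymp N$, so even without a twist one gets $\asymp Na^{-1/2+\eps}$ per $a$ and per frequency. More importantly, the loss caused by the twist, which you identified one sentence earlier, has simply been dropped. The claim that truncating at $H$ costs $O(N^2/H)$ ``after averaging'' also tacitly assumes an equidistribution statement you have not proved.

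\paragraph{The missing idea.} The twist is an artefact of how you read off the endpoints. Fix $(x_1,x_3)=(a,c)$, say with $a,c>0$, $g=(a,c)\mid h$, $a=ga'$, $c=gc'$. Parametrise the solutions of $ax_2-cx_4=h$ by one integer $t$: $x_4=x_4^{(0)}+ta'$, $x_2=x_2^{(0)}+tc'$.

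All four constraints $|x_2|,|x_4|\leq N$ then become bounds on $t$ with endpoints $(\pm N-x_4^{(0)})/a'$ and $(\pm N-x_2^{(0)})/c'$, which have integer numerators. Indeed your hyperbolic endpoint is exactly
$\big((Na-h)/c-x_4^{(0)}\big)/a'=(N-x_2^{(0)})/c'$, where $x_2^{(0)}\equiv h'\overline{a'}\pmod{c'}$. This is the reciprocity $\overline{c'}/a'+\overline{a'}/c'\equiv 1/(a'c')\pmod 1$ in disguise.

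So every sawtooth argument has the pure form $(r_1+r_2\overline{x})/y$ with fixed integers $r_1,r_2$. For the $x_2$-endpoints, however, the modulus is $c'$, so you must sum over $a'$ with $c'$ fixed, not over $x_3$ with $a$ fixed.

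The conditions selecting the active endpoints, and the non-emptiness of the $t$-range, are linear in $(a',c')$. Hence the inner variable runs over an interval of length at most $N/g$. One can then proceed as follows:
\begin{enumerate}
\item Truncate the Fourier expansion at $Q=N^{100}$. The tail is harmless, since $\|t/u\|\geq 1/u$ unless $u\mid t$.
\item Apply completion and Weil to get $\ll N^{\eps}(N/g)(sr_2,u)^{1/2}u^{-1/2}$ for the frequency-$s$ sum modulo $u$.
\item Use Cauchy--Schwarz with $\sum_{u\leq X}(m,u)\ll_\eps Xm^{\eps}$ to obtain $N^{\eps}(N/g)^{3/2}$.
\item Sum over $g\mid h$ to reach $N^{3/2+\eps}$, with no balancing at all.
\end{enumerate}
This is the paper's route, after splitting into $ax+by=h$ and $ax-by=h$ and dividing out $d=(x,y)$.

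Your main-term sketch is a plausible variant of the paper's M\"obius-plus-Riemann-sum argument. The error analysis, however, has to be done in these coordinates.
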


A natural point of comparison for this is its smoothened analogue, and so, let $w : \mathbb{R}^4 \to [0,1]$ be some smooth, compactly-supported function satisfying some further nice properties such as $w(\vec{0}) = 0$, see \cite{HB1996}. Here, a nice result of Heath-Brown \cite[Theorem 4]{HB1996} gives us
\begin{equation} \label{eqn1.3}
T_{w}(N^2,N) := \sum_{\vec{x} \in \mathbb{Z}^4} w(\vec{x}/N) \1_{Q(\vec{x}) = N^2} = \sigma_{\infty,w} \mathfrak{S}_h N^2 + O_{\eps, w}(N^{3/2 + \eps}), 
\end{equation}
where $\sigma_{\infty,w}$ is a smoothened version of $\sigma_{\infty}(1)$, see \cite[Theorem 3]{HB1996}. 
One can approximate the sharp cut-off with smooth weights and then use \eqref{eqn1.3}, but this makes the error terms extremely weak, see \cite[Theorem 3.6]{Nie2010}. Nevertheless, despite dealing with sharp cut-offs, our error term in Theorem \ref{thm1.1} matches in strength its analogue in \eqref{eqn1.3}. Combining Theorem \ref{thm1.1} with various properties of $\sigma_{\infty}(h/N^2)$ and $\mathfrak{G}_h$ gives the following corollary.

\begin{corollary} \label{cor1.2}
    Let $\lambda \in [0, 2)$. Then $0 < \sigma_{\infty}(\lambda) \ll 1$. Moreover, there exists  $C_{\lambda} >0$ such that for any $N \geq C_{\lambda}$ and $1 \leq h \leq 2N^2$, writing $\Delta = h - \lambda N^2$, we have that
    \[ T(h,N) = \sigma_{\infty}(\lambda) \frac{1}{\zeta(2)} \bigg(\sum_{d\mid h}\frac{1}{d}\bigg) N^2 + O_{\eps}(N^{3/2+ \eps}+ N^{\eps}|\Delta|). \]
\end{corollary}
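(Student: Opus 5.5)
Corollary \ref{cor1.2} will follow from Theorem \ref{thm1.1} once three facts are in place: an explicit evaluation of $\mathfrak{S}_h$, positivity and boundedness of $\sigma_\infty$ on $[0,2)$, and a Lipschitz-type estimate for $\sigma_\infty$ near $\lambda$. The proof then reads
\[
\mathfrak{S}_h\sigma_\infty(h/N^2) = \mathfrak{S}_h\sigma_\infty(\lambda) + O(\mathfrak{S}_h |\Delta|/N^2),
\]
and uses $\mathfrak{S}_h \ll \sum_{d\mid h} d^{-1} \ll \log\log h \ll N^{\eps}$. Multiplying by $N^2$ gives the error term $O(N^{\eps}|\Delta|)$, and adding the $O_\eps(N^{3/2+\eps})$ from Theorem \ref{thm1.1} gives the claim.

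\emph{Local densities.} I will compute $\sigma_p(h)$ directly for $Q=x_1x_2-x_3x_4$ (up to relabelling, the form in the introduction). Count solutions modulo $p^n$ by fixing $(x_3,x_4)$ and counting $(x_1,x_2)$ with $x_1x_2\equiv m \mmod{p^n}$. Equivalently, use characters:
\[
p^{-3n}\#\{\cdots\} = \sum_{j=0}^{n} p^{-4j}\sum_{a\in(\Z/p^j)^*} S(a)\,e(-ah/p^j),
\]
where $S(a)=\sum_{\vec x}e(aQ(\vec x)/p^j)$. Summing over $x_1,x_2$ first, and then over $x_3,x_4$, gives $S(a)=p^{2j}$. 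The inner sum is therefore $p^{-2j}c_{p^j}(h)$, a Ramanujan sum. Summing over $j$ yields
\[
\sigma_p(h)=(1-p^{-2})\sum_{0\le j\le v_p(h)}p^{-j}.
\]
Taking the product over $p$ gives $\mathfrak{S}_h=\zeta(2)^{-1}\sum_{d\mid h} d^{-1}$. This step is routine.

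\emph{Real density.} Write $\sigma_\infty(\mu)$ as the integral, over $(x_1,x_3,x_4)\in[-1,1]^3$ with $|x_1|$ bounded away from zero, of the Jacobian $1/|x_1|$ from solving for $x_2=(\mu+x_3x_4)/x_1$, restricted to $|x_2|\le 1$:
\[
\sigma_\infty(\mu)=\int_{[-1,1]^3} \frac{\1_{|\mu+x_3x_4|\le |x_1|}}{|x_1|}\d x_1\d x_3\d x_4 = 2\int_{[-1,1]^2}\log^+\!\frac{1}{|\mu+x_3x_4|}\d x_3\d x_4 .
\]
From this form, existence and $\sigma_\infty(\mu)\ll 1$ follow because the logarithmic singularity is integrable. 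Positivity for $\mu<2$ follows because the set $\{|\mu+x_3x_4|<1\}$, for instance near $x_3=-x_4$ with $x_3^2\approx\mu$, has positive measure (for $\mu<1$ this is clear, and for $\mu\in[1,2)$ take $x_3x_4\approx -\mu+1/2\in(-1,0)$).

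\emph{Main obstacle: the Lipschitz bound.} I need $|\sigma_\infty(\mu)-\sigma_\infty(\lambda)|\ll_\lambda|\mu-\lambda|$ for $\mu\in[0,2]$, where $\mu=h/N^2$. The difficulty is that $\sigma_\infty$ may fail to be Lipschitz near $\mu=0$, since $\log^+$ has a singularity there. I would differentiate under the integral: $\frac{d}{d\mu}$ of the formula above is $-2\int \mathrm{sgn}(\mu+x_3x_4)/|\mu+x_3x_4|$ over the region where this quantity is below $1$. This derivative is bounded by integrating in $x_4$ first, which produces $\log$ terms in $x_3$ that are integrable, plus a possibly $\log(1/\mu)$ contribution. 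So near $\lambda=0$ one gets $|\sigma_\infty(\mu)-\sigma_\infty(0)|\ll |\Delta|N^{-2}\log N$, which is absorbed into the factor $N^{\eps}$.

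When $|\Delta|$ is large, for example when $\mu$ approaches $2$ where $\sigma_\infty$ vanishes, the trivial bound suffices: then $N^{\eps}|\Delta|\gg N^2$, which dominates the main term. The constant $C_\lambda$ is chosen to cover this case split and the small-$N$ range, so that Theorem \ref{thm1.1} combined with the three facts above gives the stated asymptotic.
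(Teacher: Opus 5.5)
Your proposal is correct in outline and has the same skeleton as the paper's proof: Theorem~\ref{thm1.1}, the evaluation $\fS_h=\zeta(2)^{-1}\sum_{d\mid h}d^{-1}$, positivity and boundedness of $\sigma_\infty$, a regularity estimate for $\sigma_\infty$ at scale $|\Delta|/N^2$, and the divisor bound. Where you differ is in how these ingredients are obtained.

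The paper computes $\sigma_p$ by sorting solutions according to $\min\{\nu_p(x_1),\nu_p(x_3)\}$ rather than via Ramanujan sums. It reads off positivity and boundedness from $\sigma_\infty=4J+8K$, using $J>0$ on $(0,2)$, $K(0)=2$ and the crude bound \eqref{eqn3.8}. It proves the regularity estimate (Lemma~\ref{lem7.1}) with no differentiation at all. It replaces $J,K$ by the truncated integrals $\widetilde J_\eta,\widetilde K_\eta$ over $(\eta,1)^2$, at cost $O(\eta\log(2/\eta))$. It then uses that $\lambda\mapsto\mu((0,\beta]\cap(\lambda-(0,\alpha]))$ is $1$-Lipschitz, so the truncated integrals move by at most $\int_{(\eta,1)^2}\eta/(\alpha\beta)\d\alpha\d\beta\ll\eta\log^2(2/\eta)$, where $\eta=|\Delta|/N^2$, with an absolute constant.

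Your representation $\sigma_\infty(\mu)=2\int_{[-1,1]^2}\log^+(1/|\mu+x_3x_4|)\d x_3\d x_4$ is more conceptual and makes positivity and boundedness immediate. Carried out carefully, it even shows that $\sigma_\infty$ is Lipschitz on $[0,2]$ (its right derivative at $0$ is $-2\pi^2$), so your worry about $\mu=0$ is unfounded, though harmless. The price is a singular integral, which the paper's truncation trick avoids.

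Three points need tightening.

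\emph{The derivative step.} The derivative you write, the integral of $\mathrm{sgn}(\mu+x_3x_4)/|\mu+x_3x_4|$ over $\{|\mu+x_3x_4|<1\}$, does not converge absolutely for $0<\mu<1$. This is because $1/|\mu+x_3x_4|$ is not integrable across the hyperbola $x_3x_4=-\mu$, so you cannot literally differentiate under the integral sign. The rigorous version is the one you hint at: integrate in $x_4$ first. For $x_3\neq 0$ the inner integral then equals $|x_3|^{-1}\int_{\mu-|x_3|}^{\mu+|x_3|}\log^+(1/|w|)\d w$. This is absolutely continuous in $\mu$, with derivative $|x_3|^{-1}\big(\log^+(1/|\mu+|x_3||)-\log^+(1/|\mu-|x_3||)\big)$.

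Even here you must keep the two logarithms together for $|x_3|\leqslant\mu/2$. When $0<\mu<1$, each is close to $\log(1/\mu)>0$ near $x_3=0$, so either one times $1/|x_3|$ is non-integrable. Their difference, however, is $O(|x_3|/\mu)$, because $\log^+(1/|w|)$ is $(2/\mu)$-Lipschitz on $[\mu/2,\infty)$. This gives $\int_{-1}^{1}|\partial_\mu(\cdot)|\d x_3\ll 1+\log^2(2/\mu)$ uniformly for $\mu\in(0,2]$. Fubini then yields $N^2|\sigma_\infty(h/N^2)-\sigma_\infty(\lambda)|\ll(1+|\Delta|)\log^2 N$, which suffices.

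\emph{The positivity witness.} Your choice $x_3x_4\approx-\mu+1/2$ lies outside $[-1,1]$ once $\mu>3/2$. Take $x_3x_4$ near $-\mu/2$ instead, so that $0<\mu+x_3x_4<1$.

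\emph{Uniformity.} Since the derivative bound above is uniform in $\mu$, the case split for large $|\Delta|$ and the $\lambda$-dependent Lipschitz constant are unnecessary. With $\ll_\lambda$ you would only obtain $O_{\eps,\lambda}$, which is slightly weaker than the stated $O_\eps$.
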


This was previously known when $\lambda = 0$ with error terms $\ll_{\eps} N^{\eps}(N+ |h|)$, see \cite{CM2025b, DHP2025}. These works employ very different types of techniques which can not analyse the case when $\lambda >0$. In the special case when $h$ is a prime $p$ satisfying $1< p/N^2 < 2$, Martin--White--Yip \cite{martin} proved an asymptotic formula for $T(p,N)$; see also work of Dhanda--Haynes--Prasala \cite[Theorem 2]{DHP2025} for the case when $\{h,N\} = \{p, \lfloor p^{1/2} \rfloor\}$ or $\{h,N\} = \{p^2,p\}$ for some prime $p$.

Noting the analogous conjectured bounds in the setting of \eqref{eqn1.1} and \eqref{eqn1.2}, see \cite[\S1]{CM2025b}, as well as the results of \cite{CM2025b, DHP2025}, one might conjecture that the first error term in Corollary \ref{cor1.2} should be $O_{\eps}(N^{1 + \eps})$, thus exhibiting square-root cancellation. When $\lambda =1$, we can exploit an additional available symmetry via Ramanujan sums to bypass the application of Kloosterman sum bounds and confirm the above speculation. This is our main result.

\begin{theorem} \label{thm1.3}
    Let $h, N \in \mathbb{N}$ satisfy  $1 \leq h \leq 2N^2$. Writing $\Delta = h -N^2$, we have that
    \[ T(h,N) = \left(\frac{8}{\zeta(2)} - 4\right) \bigg(\sum_{d\mid h}\frac{1}{d}\bigg)N^2 + O_{\eps}(N^{1+ \eps}+ N^{\eps}|\Delta|).  \]
\end{theorem}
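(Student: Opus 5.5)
First reduce to $h=N^2$ exactly, up to an error $O(N^{\eps}|\Delta|)$. Changing $h$ by $1$ changes $T(h,N)$ by $O(N^{1+\eps})$ on average, so this reduction needs a direct argument rather than a crude one. Instead I would parametrise: for fixed $(x_3,x_4)$, the number of $(x_1,x_2)\in[-N,N]^2$ with $x_1x_2=h+x_3x_4$ is a restricted divisor count. Comparing $h$ with $N^2$ then amounts to moving the target $n=h+x_3x_4$ by $\Delta$. Summing over the $O(N^2)$ pairs would be too costly, so the reduction has to be organised differently.

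I would write $T(h,N)=\sum_{x_1,x_3}\#\{x_2,x_4\in[-N,N]: x_1x_2-x_3x_4=h\}$. For fixed $(x_1,x_3)$ with $g=\gcd(x_1,x_3)\mid h$, the solutions $(x_2,x_4)$ form a one-parameter family $(x_2,x_4)=(a+tx_3/g,\,b+tx_1/g)$. The count is therefore the length of an interval of $t$, namely $\frac{g}{\max(|x_1|,|x_3|)}\cdot(\text{length})$ plus a sawtooth term $\psi$. The main term is the sum of the interval lengths over $g\mid h$, weighted by coprimality via Möbius inversion. It equals a real integral times $\sum_{g\mid h}g^{-1}\cdot\zeta(2)^{-1}$ plus lattice point errors of size $O(N^{1+\eps})$. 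The dependence on $h$ in the interval lengths is Lipschitz with total variation contributing $O(N^\eps|\Delta|)$, which gives the $\Delta$-term. Evaluating the integral at $\lambda=1$ gives the constant $8/\zeta(2)-4$. I would verify this by computing $\sigma_\infty(1)$ directly, which is a volume computation; the $-4$ should come from the boundary effect where $|x_2|$ or $|x_4|$ hits $N$.

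The main obstacle is the sawtooth error $\sum_{x_1,x_3}\psi(\cdot)$. Here the endpoints of the $t$-interval are of the form $(N-a)g/x_3$, where $a\equiv h\bar{x_1}$ modulo $x_3/g$. Bounding these trivially gives $N^2$, and Kloosterman bounds give only $N^{3/2+\eps}$, as in Theorem~\ref{thm1.1}. The key point when $h=N^2$ is that $N-a\equiv N-N^2\overline{x_1}\equiv N\overline{x_1}(x_1-N)$, so the endpoint depends on the residue $\overline{x_1}$ in a way that factors.

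I would expand $\psi$ in its Fourier series truncated at height $H$. After the substitution $x_1\mapsto$ the shifted variable $y=x_1-N$, and summing over $x_1$ in complete residue classes, the exponential sums collapse. This happens because $N\overline{x_1}(x_1-N)/x_3$ is $N/x_3-N^2\overline{x_1}/x_3$, and summing $e(kN^2\overline{x_1}/q)$ over the range of $x_1$, which is essentially $N/q$ complete periods, yields Ramanujan sums $c_q(kN^2)$ rather than Kloosterman sums. Ramanujan sums are bounded by $\gcd(q,kN^2)$, and summing $\sum_{q\le N}\sum_{k\le H}k^{-1}\gcd(q,kN^2)$ gives $O(N^{1+\eps})$. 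Incomplete-period remainders are handled by partial summation, and the remaining cases $|x_1|<|x_3|$ are handled symmetrically.

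Finally, I would combine the main term, the $\Delta$-perturbation, and the sawtooth bound to conclude the theorem.
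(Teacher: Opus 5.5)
Your treatment of the main term is essentially what the paper does: parametrise by one pair of variables, remove coprimality by M\"obius inversion, compare the lattice sum of interval lengths with an integral, use continuity of the density in $\lambda=h/N^2$ at cost $O(N^{\eps}|\Delta|)$, and evaluate at $\lambda=1$.

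The gap is in the step specific to Theorem~\ref{thm1.3}: the $O_\eps(N^{1+\eps})$ bound for the sawtooth terms. The identity $N-N^2\overline{x_1}\equiv N\overline{x_1}(x_1-N)$ and the shift $y=x_1-N$ do no work. For every $h$, a sum of $e(kh\overline{v}/q)$ over $v$ running through a full reduced residue system modulo $q$ is already the Ramanujan sum $c_q(kh)$. The real difficulty is that the summation ranges are not full periods. For a fixed modulus $q$, the other variable does not run over all of $[-N,N]$, so ``essentially $N/q$ periods'' is not the relevant description. It runs over the set where that particular endpoint is the active one and the $t$-interval is non-empty. That set is an interval $I_q$ whose length is in general not close to a multiple of $q$. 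The leftover piece is a sharp-cut-off incomplete Kloosterman sum of length up to $q$, and partial summation cannot touch it, since there is no smooth weight to exploit. Bounded trivially it costs about $\sum_{q\leqslant N}q\asymp N^2$; via Lemma~\ref{lem2.4} it costs $N^{3/2+\eps}$, which is exactly Theorem~\ref{thm1.1}. Nothing in your argument genuinely uses $h\approx N^2$, so it would give $N^{1+\eps}$ for every $h$. That is a sign the hard part has been assumed away.

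The missing idea is geometric. Use the paper's notation for additive-type solutions: $au+bv=h/d$ with $a,b\in[N]$ and coprime $u,v\leqslant N/d$, with $h=N^2+\Delta$. Then $U=(a_0-N)/v$ and $V=(N-b_0)/u$ hold except on thin strips such as $N/d-v/N+\Delta/(dN)<u\leqslant N/d$. Each strip contains $O(1+v/N+|\Delta|/(dN))$ values of $u$ for each $v$, so the strips contribute $O_\eps(N^{1+\eps}+|\Delta|)$ trivially. Off the strips, $V\geqslant U$ is equivalent to $u+v\geqslant h/(dN)=N/d+\Delta/(dN)$. So for fixed $u$, the variable $v$ runs over $[N/d+\Delta/(dN)-u,\,N/d]$, an interval of length $u+O(|\Delta|/(dN))$. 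This is a single complete period modulo $u$ up to $O(1+|\Delta|/(dN))$ terms. Those extra terms are bounded trivially and are the source of the $N^{\eps}|\Delta|$ error. The complete part becomes a Ramanujan sum, and \eqref{eqn2.2} with Lemma~\ref{lem2.1} gives $N^{1+\eps}/d$ (Lemma~\ref{lem8.1}). Both endpoints, with moduli $u$ and $v$ respectively, are active throughout this region. So splitting according to whether $|x_1|<|x_3|$ does not correspond to anything.

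Two further pieces are missing from your plan.
\begin{itemize}
\item Difference-type configurations must be disposed of separately. The paper shows directly that $ax-by=N^2+\Delta$ with $a,b,x,y\in[N]$ has only $O_\eps(N^{\eps}(1+|\Delta|))$ solutions, because it forces $a,x\geqslant N+\Delta/N$.
\item Your sawtooth discussion treats only $h=N^2$, and you abandoned the reduction from $h$ to $N^2$ in your first paragraph. So the $\Delta$-dependence of the error term is never controlled. In the paper it enters only through the $O(|\Delta|/(dN))$ defect in $|I_u|$ and the widths of the thin strips.
\end{itemize}
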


In order to highlight the novelty of the asymptotic formula given by Theorem \ref{thm1.3}, we will compare this to other known asymptotic results in related settings. Thus, for any $h, N \in \mathbb{N}$, define 
\[ E(h,N) = T(h,N) - (8\zeta(2)^{-1} - 4) N^2 (\sum_{d\mid h}1/d).\]
Then Theorem \ref{thm1.3} delivers the following corollary.

\begin{corollary} \label{cor1.4}
    Let $h, N \in \mathbb{N}$ satisfy $h = N^2 + O(N)$. Then $E(h,N) \ll_{\eps}  N^{1 + \eps}.$
\end{corollary}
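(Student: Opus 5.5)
Corollary \ref{cor1.4} is an immediate specialisation of Theorem \ref{thm1.3}, so the plan is simply to check that its hypotheses hold and that its error terms collapse to $O_\eps(N^{1+\eps})$.

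\emph{Reducing to Theorem \ref{thm1.3}.} The assumption $h = N^2 + O(N)$ means there is an absolute constant $C>0$ with $|h - N^2| \leq CN$. Theorem \ref{thm1.3} needs $1 \leq h \leq 2N^2$; the lower bound holds since $h \in \N$. For the upper bound I would split into two cases.
\begin{enumerate}[(i)]
\item If $N \geq C$, then $h \leq N^2 + CN \leq 2N^2$, so Theorem \ref{thm1.3} applies.
\item If $N < C$, there are only $O(1)$ admissible pairs $(h,N)$, since $h \leq N^2+CN \ll 1$. For these, $T(h,N) \leq (2N+1)^4 \ll 1$ and $N^2\sum_{d\mid h} 1/d \ll N^2 d(h) \ll 1$. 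Hence $E(h,N) \ll 1 \ll N^{1+\eps}$ trivially.
\end{enumerate}

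\emph{Applying the theorem.} In case (i), with $\Delta = h - N^2$, Theorem \ref{thm1.3} gives
\[ E(h,N) = T(h,N) - \left(\frac{8}{\zeta(2)}-4\right)\bigg(\sum_{d\mid h}\frac1d\bigg)N^2 \ll_\eps N^{1+\eps} + N^{\eps}|\Delta|. \]
Since $|\Delta| \leq CN$, the second term is $\ll N^{1+\eps}$, and therefore $E(h,N) \ll_\eps N^{1+\eps}$.

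There is no genuine obstacle here: all the depth sits in Theorem \ref{thm1.3}. The only points needing care are confirming the range condition $h \leq 2N^2$, which is handled by the small-$N$ case, and noting that the implied constant depends on the constant in the hypothesis $h = N^2+O(N)$ as well as on $\eps$.
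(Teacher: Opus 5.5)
Your proposal is correct and is the same argument as the paper, which simply reads Corollary~\ref{cor1.4} off from Theorem~\ref{thm1.3}: with $\Delta = O(N)$, the error term $N^{\eps}|\Delta|$ is $O(N^{1+\eps})$. Your separate handling of bounded $N$, which ensures $h \leq 2N^2$, is a harmless extra check that the paper leaves implicit.
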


When $h$ is taken to be some arbitrary power of some prime, this also confirms a speculation of Dhanda--Haynes--Prasala \cite{DHP2025}. The latter authors employed very different type of arguments, which in particular relied very crucially on divisibility properties of primes, to prove Corollary \ref{cor1.4} in the special case when $\{h,N\} = \{p, \lfloor p^{1/2} \rfloor\}$ or $\{h,N\} = \{p^2,p\}$ for some prime $p$. They further speculated that this should hold when $h$ is a higher power of some prime, but that this does not seem to follow from the types of arguments given in their proof. Thus Corollary \ref{cor1.4} confirms their speculation in a very general form.

We can compare Corollary \ref{cor1.4} to its smoothened analogue in \eqref{eqn1.3} which satisfies the much weaker upper bound
\[ E_{w}(h,N) = T_{w}(h,N) - \sigma_{\infty,w} \mathfrak{S}_h N^2 \ll_{\eps} N^{3/2 + \eps}. \]
We now consider the $\ell^2$ analogue of this as described in \eqref{eqn1.1}, that is, we define
\[ 
E_{\ell^2}(h',N) = |\{ \vec{x} \in \mathbb{Z}^4:   Q(\vec{x}) = h' \ \text{and}  \ x_1^2 + \dots + x_4^2 \leq N^2\}| -  6 N^2 \sum_{d|h'} d^{-1}\]
for all $1 \leq h' \leq N^2$. In this case, an application of the deep spectral theory of automorphic forms gives us, for any $1 \leq h' \leq N^2$, the bound
\[ E_{\ell^2}(h',N)  \ll h'^{1/3} N^{4/3} \sum_{d|h'} \frac{1}{d}; \]
see \cite[Theorem 12.4]{Iw2002}. On the other hand, when $h = N^2 + O(N)$, this upper bound matches the size of the main term, and thus, is significantly larger than our upper bound in Corollary \ref{cor1.4}. For $h = N^2 + O(N)$, the only known upper bound of the form $E_{\ell^2}(h,N) = o(N^2 \sum_{d|h}1/d)$ follows from a general class of results proven by Oh \cite{Oh2004} using techniques from dynamics, although the latter do not seem to given any quantitative power saving upper bound. Next, we consider the binary additive divisor problem described in \eqref{eqn1.2}, and so, we define
\[ E_{\rm div}(h',N) = |\{ \vec{x} \in \mathbb{N}^4 : Q(\vec{x}) = h' \ \text{and}  \ x_3 x_4 \leq N^2  \}| - M_{\rm div}(h',N) \]
for all $h' \in \mathbb{N}$, where $M_{\rm div}(h',N)$ is the main term described by Motohashi in \cite[Theorem 1]{Mo1994}. When $h = N^2 + O(N)$, we know that $N^2 \ll M_{\rm div}(h,N) \ll_{\eps} N^{2 + \eps}$. Using spectral theory of automorphic forms, Motohashi \cite{Mo1994} proved that 
\[ E_{\rm div}(h,N) \ll_{\eps} N^{4/3 + \eps}, \]
see also work of Meurman \cite{Meu2001} on this topic. While this compares favourably to our error term in Theorem \ref{thm1.1}, this is still significantly larger than our upper bound in Corollary \ref{cor1.4}. 

Returning to our asymptotic formula described in Theorem \ref{thm1.1}, we note that we can  explicitly calculate $\sigma_{\infty}(\lambda)$ and $\mathfrak{S}_h$. In particular, letting ${\rm Li}_2$ denote the \emph{dilogarithm function} (see \eqref{eqn5.4}), we obtain the following estimates.

\begin{proposition}\label{prop1.5}
    Let $h \in \mathbb{N}$ and let $\lambda >0$. Then
    \begin{equation}\label{eqn1.4}
    \fS_h= \zeta(2)^{-1}\sum_{d\mid h}\frac{1}{d} \ \ \text{and} \ \  \sigma_{\infty}(\lambda) = 4J(\lambda) + 8K(\lambda),
    \end{equation}
    where $J(\lambda)$ and $K(\lambda)$ are defined by \eqref{eqn3.7} and \eqref{eqn4.4} respectively. Moreover, if $\lambda \in (0,1)$, then
    \begin{equation*}
        \sigma_\infty(\lambda) = 16 - 8(1-\lambda)\log(1-\lambda) + 4\lambda(\Li_2(\lambda) - 2(1+\zeta(2)) - 2\lambda\log\lambda + 2(1-\lambda)\log^2\lambda),
    \end{equation*}
    whilst if $\lambda \in (1,2)$, then
    \begin{align*}
    \sigma_{\infty}(\lambda)=
        8 + 8(\lambda-1)(\log(\lambda - 1) - 1) + 4\lambda(\zeta(2) - \log^2 \lambda - 2\Li_2(1/\lambda)).
    \end{align*}
    Finally, we have $\sigma_\infty(1) = 8-4\zeta(2)$, and $0<\sigma_\infty(\lambda)\leqslant 48$ for all $\lambda\in[0,2)$.
\end{proposition}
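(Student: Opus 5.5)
We treat the two halves of \eqref{eqn1.4} separately. For the singular series we compute $\sigma_p(h)$ prime by prime. By Hensel's lemma, any solution of $Q(\vec{x})\equiv h \mmod{p^n}$ with $\vec{x}\not\equiv \vec{0}\mmod p$ lifts. So we count solutions modulo $p^n$ by splitting according to the largest power $p^j$ dividing all the $x_i$. Write $p^{\nu}\| h$. Primitive solutions of $x_1x_2-x_3x_4\equiv m \mmod{p^n}$ number $p^{3n}(1-p^{-2})$ when $p\nmid m$. The same count holds for $p\mid m$: the count of $xy-zw\equiv m$ modulo $p$ is $p^3-p$ if $p\nmid m$, and $p^3+p^2-p$ if $m\equiv 0$, minus the zero vector, giving $p^3+p^2-p-1$. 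Hence the primitive density is $1-p^{-2}$ when $p\nmid m$ and $(1-p^{-2})(1+p^{-1})$ when $p\mid m$, after the lifting step. Summing over $j\le \nu$ (scaling $\vec{x}=p^j\vec{y}$ changes the modulus and the density by $p^{-2j}\cdot p^{j}$-type factors) should telescope to $\sigma_p(h)=(1-p^{-2})\sum_{j\le\nu}p^{-j}$. Taking the product over $p$ gives $\zeta(2)^{-1}\sum_{d\mid h}1/d$. I would verify the telescoping carefully on $\nu=1$ first.

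For the real density we use symmetry. Up to sign changes of the variables, the region $[-1,1]^4$ with $x_1x_2-x_3x_4=\lambda>0$ splits into two cases according to the sign of $x_3x_4$. When $x_3x_4\le 0$, write $x_1x_2=\lambda-u$ with $u=x_3x_4\in[-1,0]$. When $x_3x_4>0$, both products are positive and $x_1x_2=\lambda+v$. The density of a product $st$ with $s,t\in[0,1]$ is $-\log t$ on $(0,1]$. Hence
\[ \sigma_\infty(\lambda)=c\int \rho(a)\rho(a-\lambda)\d a, \qquad \rho(a)=2\log(1/|a|)\,\1_{|a|\le1}, \]
where $\rho$ is the density of $x_1x_2$ on $[-1,1]^2$. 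Grouping the sign patterns produces the two integrals $J(\lambda)$ and $K(\lambda)$ of \eqref{eqn3.7} and \eqref{eqn4.4}, with multiplicities $4$ and $8$. Existence of the limit in $\eta$ follows from continuity of this convolution of $L^2$ functions.

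The explicit formulas then come from integrals of the form $\int \log a\,\log(a-\lambda)\d a$ over the ranges $[\lambda,1]$ and $[\lambda-1,1]$ (the latter when $\lambda>1$), together with $\int_{0}^{1-\lambda}\log a\log(a+\lambda)\d a$. Integrating by parts and substituting $a=\lambda t$ reduces everything to $\int \log(1-t)/t\,\d t=-\Li_2$, plus elementary logarithmic terms. For $\lambda>1$ we use the inversion formula $\Li_2(1/\lambda)$ to put the answer in the stated form. This bookkeeping is the main obstacle: matching the constants $16$, $2(1+\zeta(2))$, etc. exactly. I would check both formulas against numerical values at a few points, and check that they agree as $\lambda\to1$, giving $8-4\zeta(2)$ via $\Li_2(1)=\zeta(2)$.

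Finally, positivity follows since for $\lambda<2$ the convolution integrand is positive on a set of positive measure. The upper bound $48$ follows from Cauchy--Schwarz, $\sigma_\infty\le c\|\rho\|_2^2$ with $\int_{-1}^1 4\log^2|a|\d a=16$ times the multiplicity constant, or directly from bounding $J,K$ by their values at $\lambda=0$.
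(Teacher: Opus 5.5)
The genuine gap is the explicit evaluation for $0<\lambda<1$. You defer it as bookkeeping and expect to reproduce the constants $16$, $2(1+\zeta(2))$, and so on. That step cannot be completed, because the displayed formula is not what $4J+8K$ equals.

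Your list of integrals contains $K(\lambda)=\int_\lambda^1\log a\log(a-\lambda)\d a$ twice; the shift $a\mapsto a+\lambda$ turns $\int_0^{1-\lambda}\log a\log(a+\lambda)\d a$ into it. It omits the piece from $a\in(0,\lambda)$, namely $J(\lambda)=\int_0^\lambda\log a\log(\lambda-a)\d a$. (For $\lambda>1$, the integrand on $[\lambda-1,1]$ is $\log a\log(\lambda-a)$.) Substituting $a=\lambda s$ and using $\int_0^1\log s\log(1-s)\d s=2-\zeta(2)$ gives $J(\lambda)=\lambda(\log^2\lambda-2\log\lambda+2-\zeta(2))$, with no $\Li_2(\lambda)$ term. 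On the other hand,
\[ K(\lambda)=2-2\lambda+\lambda\log\lambda-(1-\lambda)\log(1-\lambda)+\lambda\bigl(\Li_2(\lambda)-\zeta(2)-\tfrac12\log^2\lambda\bigr), \]
as one confirms from $K(0^+)=2$ and $K'(\lambda)=\int_0^{1-\lambda}\frac{\log u}{u+\lambda}\d u=\Li_2(\lambda)-\zeta(2)-\tfrac12\log^2\lambda$. Hence, for $0<\lambda<1$,
\[ \sigma_\infty(\lambda)=16-8(1-\lambda)\log(1-\lambda)+8\lambda\Li_2(\lambda)-4\lambda\bigl(2+3\zeta(2)\bigr), \]
in which all $\log\lambda$ terms cancel.

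The endpoint checks you propose will not detect the discrepancy. Both this expression and the displayed one tend to $16$ as $\lambda\to0^+$ and to $8-4\zeta(2)$ as $\lambda\to1^-$. Interior values do detect it:
at $\lambda=1/2$ the displayed formula gives about $11.70$, whereas $4J+8K\approx 4(1.111)+8(0.349)\approx 7.23$;
at $\lambda=0.01$ it gives about $17.5$, violating your own Cauchy--Schwarz bound $\sigma_\infty\le\|\rho\|_2^2=16$.
Equivalently, it integrates to about $11.5$ over $(0,1)$, while $\int_0^2\sigma_\infty=8$ by evenness and $\int_{-2}^2\sigma_\infty=\mathrm{vol}([-1,1]^4)=16$.

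The paper's own derivation slips at exactly these two integrals. In Lemma~\ref{lem5.5}, after $\alpha=\lambda t$ in $I_3$ the upper limit should be $1$, not $\lambda$, so $I_3=\lambda(2-\zeta(2))$. In Lemma~\ref{lem5.6}, the $\log\lambda$ contribution after $\alpha=\lambda/t$ is $\log\lambda\int_\lambda^1\d t/t=-\log^2\lambda$, not $-\lambda\log\lambda$. The displayed combination is not even $4J+8K$ for the paper's stated $J$ and $K$: that sum has $8\lambda\log^2\lambda$, not $8\lambda(1-\lambda)\log^2\lambda$.

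The other assertions are fine: the formula on $(1,2)$, the value $\sigma_\infty(1)=8-4\zeta(2)$, positivity on $[0,2)$, and the bound $48$ (indeed $16$, with your constant $c=1$). One small fix there: $J$ cannot be bounded by its value at $0$, since $J(0)=0$. Use $J\le\|\ell\|_2^2=2$ with $\ell(a)=\log(1/a)\1_{(0,1]}(a)$.

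Apart from this, your route differs from the paper's in two harmless ways.

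For $\fS_h$, the paper avoids Hensel's lemma. It splits according to $\min\{\nu_p(x_1),\nu_p(x_3)\}=i$ and solves linearly for $x_2$, which yields $p^{3k-i}(1-p^{-2})$ solutions directly. Your primitive decomposition also works once the bookkeeping is repaired:
the sentence ``the same count holds for $p\mid m$'' contradicts the line after it;
the scaling $\vec x=p^j\vec y$ contributes the factor $p^{-2j}$, not $p^{-2j}\cdot p^j$;
$j$ runs over $2j\le\nu$, not $j\le\nu$.
With these fixes, $\sum_{2j<\nu}p^{-2j}(1-p^{-2})(1+p^{-1})+\1_{2\mid\nu}\,p^{-\nu}(1-p^{-2})=(1-p^{-2})\sum_{i\le\nu}p^{-i}$.

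For $\sigma_\infty$, writing it as $\rho*\rho$ with $\rho\in L^2$ and using continuity of the convolution proves existence more cleanly than Lemma~\ref{lem5.3}. The identity $\sigma_\infty=4J+8K$ then follows by Fubini from \eqref{eqn3.7} and \eqref{eqn4.4}. What the paper's quantitative estimate $J_\eta=J+O(\eta\log^2(2/\eta))$ buys is the modulus of continuity it reuses in Lemma~\ref{lem7.1}.
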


\subsection*{Proof ideas}
We will now give an outline of our proof ideas. 
Suppose $ax - by = h$ for some $a,x,b,y \in [-N,N] \cap \mathbb{Z}$. After removing the contribution of the solutions satisfying $axby = 0$, we divide our proof into analysing two terms. The first of these counts, up to some symmetry factor, the \emph{additive-type solutions}, that is, solutions to the equation $ax + by = h$, with $a,x,b,y \in [N]$, where $[N] := \{1,2,\dots, N\}$. Similarly, the second term counts the \emph{difference-type solutions} which count $a,x,b,y \in [N]$ such that $ax - by = h$.

Let us focus on the additive-type solutions, and so, after dividing out by $d:= (x,y)$, which also must divide $h$, we get that these equal
\[ \sum_{d|h, d \leq N}  \sum_{\substack{ 1 \leq u, v \leq N/d , \\ (u,v) = 1}} \sum_{a,b \in [N]} \1_{au + bv = h/d} .\]
Estimating the innermost sum is equivalent to counting points in an arithmetic progression with common difference $u$. Indeed, $b \equiv (h/d) \overline{v} \mmod{u}$, where $\overline{v}$ is the inverse of $v$ in $(\mathbb{Z}/u \mathbb{Z})^{\times}$. Moreover, the conditions $a,b \in [N]$ translate to constraints on the starting and end points $S_{u,v,h/d}$ and $T_{u,v,h/d}$ of the arithmetic progression which contains $b$. We write the number of admissible choices of $b$ as 
\[ u^{-1}(T_{u,v,h/d} - S_{u,v,h/d} ) \ \1_{T_{u,v,h/d} \geq S_{u,v,h/d}}   + E_{u,v,h/d} \ \1_{T_{u,v,h/d} \geq S_{u,v,h/d}} ,\]
where the first term can be interpreted as a main term contribution and $E_{u,v,h/d}$ is the sum of three error terms. The first of these error terms is simply a characteristic function which enforces a congruence condition on $v$ modulo $u$, and the other two are of the form $\psi((r_1 + r_2 \overline{x})/y)$, where $r_1, r_2$ are non-zero integers depending on $h/d$ and $N$, and $\{x,y\} = \{u,v\}$, and $\psi$ is the \emph{sawtooth function} defined in \eqref{eqn2.1}. 

We first analyse the contribution of the main term $u^{-1}(T_{u,v,h/d} - S_{u,v,h/d} )$. We begin by applying M\"{o}bius inversion to rewrite the factor $\1_{(u,v) =1}$ as $\sum_{k|(u,v)}\mu(k)$, thus introducing a further auxiliary sum. Our next idea here is to turn the discrete sum over $u,v$ into it's continuous version. In this endeavour, we note that the terms $S_{u,v,h/d}$ and $T_{u,v,h/d}$ can take two possible values each, and thus, we have to do a case-by-case analysis and prove that the discrete-to-continuous approximation in each case costs at most $O_{\eps}(N^{1 + \eps})$. After applying a further change of variables along with various further analytic manoeuvres, we able to show that the additive main term is approximately
\[ N^2 \sum_{\substack{d|h \\ d \leqslant N}} \frac{1}{d} \sum_{k\leqslant N/d}\frac{\mu(k)}{k^2} \int_{(0,1)^2} (\alpha \beta)^{-1}  \mu((0, \alpha)\cap (\lambda - (0, \beta ))) \d \alpha \d \beta , \]
where $\mu$ denotes the Lebesgue measure on $\mathbb{R}$, see Lemmas \ref{lem3.1} and \ref{lem3.2}. We now show that the above double integral is precisely the real density corresponding to the additive-type solutions. Moreover, one can massage the double arithmetic sums to obtain the singular series. One can proceed similarly in the case of difference-type solutions. Thus, we obtain the main terms for Theorems \ref{thm1.1} and \ref{thm1.3}.

What remains to estimate is the contribution of the error terms $E_{u,v,h/d}$. The congruence error terms can be bounded by $O_{\eps}(N^{1 + \eps})$ via standard elementary number theoretic lemmas. The analysis of sawtooth error terms requires a lot more work and is precisely the reason why we get the $O_{\eps}(N^{3/2 + \eps})$ error term in the conclusion of Theorem \ref{thm1.1}. We begin our analysis by proving that for any choice of $T_{u,v,h/d}$ and $S_{u,v,h/d}$, the condition that $T_{u,v,h/d} \geq S_{u,v,h/d}$ is equivalent to the vector $(u,v)$ lying inside a convex subset of $[1,N/d]\times[1,N/d]$. In particular, for every fixed $u \in [1,N/d]$, the variable $v$ varies in some interval $I_u$ of integers such that $|I_u| \leq N/d$. We can now use the truncated Fourier expansion of the sawtooth function along with some analytic and elementary number theoretic arguments to reduce the problem to estimating sums of the form
\begin{equation} \label{eqn1.5}
 \sum_{d|h, d \leq N} \sum_{0 < |s| \leq N^{100}} \frac{1}{s}  \sum_{1 \leq u \leq N/d} | \sum_{\substack{v \in I_u,  \\ (u,v) = 1}} e(s r \overline{v}/u)| 
\end{equation}
where $r$ is some fixed, non-zero integer. Using the upper bound $|I_u| \leq N/d$ along with upper bounds on incomplete Kloosterman sums, we can prove that this is bounded above by 
\begin{equation*}
 \sum_{d|h, d \leq N} \frac{N}{d}\sum_{0 < |s| \leq N^{100}} \frac{1}{s} \sum_{1 \leq u \leq N/d} \frac{(sr, u)^{1/2}}{u^{1/2}}.
 \end{equation*} 
Cauchy's inequality and standard elementary number theory lemmas allow us to bound the above by $O_{\eps}(N^{3/2 + \eps})$. This finishes our proof outline of Theorem \ref{thm1.1}.

For the purposes of Theorem \ref{thm1.3}, we have to be more careful in our analysis. We first note that since $h = N^2 + \Delta$, the number of difference-type solutions is at most 
\[ \ll_{\eps} N^{\eps} ( 1+ \Delta/N)^2 \ll N^{\eps}  (1 + \Delta).\]
Thus, it suffices to estimate the additive-type solutions. We proceed exactly as in the proof of Theorem \ref{thm1.1}, until we reach \eqref{eqn1.5}. Here, upon utilising the fact that $h = N^2 + \Delta$, we are able to show that for all $1 \leq u \leq N/d$, the interval $I_u$ satisfies $|I_u| = u + O( |\Delta|/dN)$. Thus, upon incurring a further cost of the order $O_{\eps}(N^{\eps}|\Delta|)$, we can ensure that each $I_u$ is a complete interval modulo $u$. We can now use  properties of Ramanjuan sums along with the triangle inequality, see \eqref{eqn2.2},  to bound \eqref{eqn1.5}  by
\[ N^{\eps} \sum_{0 < |s| \leq Q} \frac{1}{s} \sum_{1 \leq u \leq N/d} (sr,u) . \]
 As before, we can estimate these via elementary methods, this time obtaining the $O_{\eps}(N^{1 + \eps})$ error term.

\subsection*{Outline} We use \S\ref{sec2} to record various preliminary lemmas from elementary and analytic number theory that we will require throughout our paper. We dedicate \S\ref{sec3} to analysing the expected main term contribution in additive-type equations, and we utilise \S\ref{sec4} to do the same for difference-type solutions. In \S\ref{sec5}, we study the singular series and the real density for both additive-type and difference-type solutions, thus securing the main terms in Theorems \ref{thm1.1} and \ref{thm1.3}. We conclude \S\ref{sec5} by presenting the proof of Proposition \ref{prop1.5}. We obtain our general error term bounds for Theorem \ref{thm1.1} in \S\ref{sec6}. We employ our results from the preceding sections to record the proofs of Theorem \ref{thm1.1} and Corollary \ref{cor1.2} in \S\ref{sec7}. Finally, in \S\ref{sec8}, we obtain our error term bounds for Theorem \ref{thm1.3}, which in turn combines with the ideas presented in \S\ref{sec3} to finish our proof of Theorem \ref{thm1.3}.

\subsection*{Notation} We employ Vinogradov notation, that is, we write $Y \ll_{z} X$, or equivalently $Y =O_z(X)$, to mean that $|Y| \leq C_z X$, where $C_z>0$ is some constant depending on the parameter $z$. Unless stated otherwise, whenever $\eps$ appears in any bound, it will mean that the bound holds for every $\eps >0$, though the implicit constant may depend on $\eps$. We denote the greatest common divisor of two integers $a$ and $b$ by $(a,b)$.  

\subsection*{Acknowledgements} We thank Sam Chow, V. Vinay Kumaraswamy, and Trevor Wooley for helpful comments. JC is supported by EPSRC through Joel Moreira's Frontier Research Guarantee grant, ref. \texttt{EP/Y014030/1}. AM is supported by a Leverhulme Early Career Fellowship \texttt{ECF-2025-148}.

\subsection*{Rights}

For the purpose of open access, the authors have applied a Creative Commons Attribution (CC-BY) licence to any Author Accepted Manuscript version arising from this submission.

\section{Preliminaries} \label{sec2}

We begin by recording the following elementary lemma, see \cite[Lemma 2.3]{CM2025b}.

\begin{lemma} \label{lem2.1}
    Let $m$ be a positive integer and let $M \geq 1$ be a real number. Then 
    \[ \sum_{1 \leq y \leq M} (y,m) \ll_{\varepsilon} M m^{\varepsilon}. \]
\end{lemma}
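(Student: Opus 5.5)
The plan is to split the sum according to the value of the gcd. Write $(y,m) = \sum_{d \mid (y,m)} \varphi(d)$, or more crudely bound $(y,m) \leq \sum_{d\mid (y,m)} d$, and then swap the order of summation:
\[ \sum_{1 \leq y \leq M} (y,m) \leq \sum_{d \mid m} d \sum_{\substack{1 \leq y \leq M \\ d \mid y}} 1 \leq \sum_{d\mid m} d \cdot \frac{M}{d} = M\, d(m), \]
where $d(m)$ is the divisor function from the introduction. The inequality $(y,m) \leq \sum_{d \mid (y,m)} d$ holds trivially, since $(y,m)$ is itself one of the divisors in the sum. The inner count is at most $M/d$ because the multiples of $d$ in $[1,M]$ number $\lfloor M/d\rfloor \leq M/d$.

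To conclude, I will invoke the classical divisor bound $d(m) \ll_\eps m^{\eps}$. This gives $\sum_{y \leq M}(y,m) \ll_\eps M m^{\eps}$, as required. If a self-contained argument is preferred, the divisor bound follows from the multiplicativity of $d$. One compares $d(m)/m^{\eps} = \prod_{p^a \| m} (a+1)/p^{a\eps}$ and notes two things. For $p \geq 2^{1/\eps}$ each factor is at most $1$. For the finitely many smaller primes each factor is bounded by a constant depending only on $\eps$.

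There is no real obstacle in this argument. The only point needing care is to sum over divisors $d$ of $m$ rather than divisors of $y$, so that the resulting bound depends only on $d(m)$ and is uniform in $M \geq 1$.
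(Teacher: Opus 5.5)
Your argument is correct and complete. Bounding $(y,m)\le\sum_{d\mid (y,m)}d$ and swapping sums gives $\sum_{y\le M}(y,m)\le M\,d(m)$ uniformly in $M\geq 1$, and the divisor bound $d(m)\ll_\eps m^{\eps}$ (your multiplicative sketch of it is also fine) then yields the lemma. The paper does not prove this lemma itself but imports it from the authors' earlier work, and your divisor-swap is the standard argument for it.
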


Given $x \in \mathbb{R}$, we denote $\lVert x \rVert = \min_{n \in \mathbb{Z}} |x - n|$ and $e(x) = e^{2 \pi i x}$. We also define 
\begin{equation} \label{eqn2.1}
    \psi(x) = x - \lfloor x \rfloor - 1/2, 
\end{equation}
where $\lfloor x \rfloor$ is the  greatest integer less than or equal to $x$.   The function $\psi : \mathbb{R} \to [-1/2,1/2)$ is referred to as the \emph{saw-tooth function}. We will require the following truncated Fourier expansion for $\psi(x)$, see for example, \cite[\S5]{Hoo1963}.

\begin{lemma} \label{lem2.2}
    Let $Q \geqslant 1$ be a real number. For any $\theta\in\R$, we have
    \begin{equation*}
        \psi(\theta) = \sum_{0<|h|\leqslant Q}\frac{e(\theta h)}{2\pi i h} + O\left(\min\left\lbrace 1, \frac{1}{Q\lVert \theta\rVert}\right\rbrace\right).
    \end{equation*}
\end{lemma}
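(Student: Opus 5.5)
The plan is to write $\psi$ as the truncated Fourier series plus a tail, and bound the tail by partial summation. Before that, one sign point must be settled. With $\psi(x)=x-\lfloor x\rfloor-\tfrac12$ as in \eqref{eqn2.1}, the full Fourier series is
\[ \psi(\theta)=-\sum_{n\geq 1}\frac{\sin(2\pi n\theta)}{\pi n}=-\sum_{h\neq 0}\frac{e(\theta h)}{2\pi i h}=\sum_{h\neq0}\frac{e(-\theta h)}{2\pi i h}. \]
Taking $\theta=1/4$ and $Q\to\infty$ in the display of Lemma \ref{lem2.2}, the left side is $-1/4$ while the sum tends to $+1/4$, and the error term tends to $0$. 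So the display is only correct if either the sum is replaced by $\sum_{0<|h|\leq Q}e(-\theta h)/(2\pi i h)$, or $\psi$ is replaced by $-\psi$. I will prove it in that corrected form. This agrees with \cite[\S5]{Hoo1963} and costs nothing for the paper: everywhere the lemma is used, only the absolute values of the exponential sums $\sum_s s^{-1}e(\cdot)$ matter, and $s\mapsto -s$ permutes the range $0<|s|\leq Q$.

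\textbf{Step 1 (trivial case).} By periodicity assume $\theta\in(0,1)$; the case $\theta\in\Z$ is immediate, since the symmetric sum vanishes and $|\psi|\leq 1/2$. If $Q\n{\theta}\leq 1$, both $\psi(\theta)$ and the truncated sum are $O(1)$. Indeed, the truncated sum is the $\lfloor Q\rfloor$-th partial sum of the sine series of a sawtooth, and these partial sums are uniformly bounded. One way to see this is to write $\sum_{n\leq M}\sin(2\pi n\theta)/n$ as an integral of the Dirichlet kernel and use the boundedness of $\int_0^x \sin t/t\,\d t$. So the bound $O(1)$ suffices in this range.

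\textbf{Step 2 (main case $Q\n{\theta}>1$).} Since the series converges pointwise off the integers, it suffices to show
\[ \sum_{|h|>Q}\frac{e(-\theta h)}{2\pi i h}\ll \frac{1}{Q\n{\theta}}. \]
Pair $h$ with $-h$, which reduces this to bounding $\sum_{n>Q}\sin(2\pi n\theta)/n$. Apply Abel summation with $A(M)=\sum_{Q<n\leq M}e(n\theta)$, which satisfies $|A(M)|\leq 1/|\sin\pi\theta|\ll 1/\n{\theta}$. Since $1/n$ is decreasing, the tail is $\ll \n{\theta}^{-1}\cdot Q^{-1}$, as required.

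The only real obstacle is the uniform boundedness of the partial sums in Step 1. This is classical, and I would either quote it from Hooley or give the Dirichlet-kernel argument sketched above. Combining Steps 1 and 2 yields the error term $O(\min\{1,1/(Q\n{\theta})\})$.
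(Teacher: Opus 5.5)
Your argument is correct. The paper does not prove this lemma; it quotes it from Hooley \cite[\S5]{Hoo1963}. Your proof is therefore a self-contained replacement for that citation rather than an alternative to an argument in the text: Abel summation bounds the Fourier tail when $Q\lVert\theta\rVert>1$, and a separate $O(1)$ bound covers the remaining range.

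Your sign correction is right. With $\psi$ as in \eqref{eqn2.1}, one has $\int_0^1(x-\tfrac12)e(-hx)\,\mathrm{d}x=-1/(2\pi i h)$ for $h\neq0$. So the correct expansion is $\psi(\theta)=-\sum_{0<|h|\leqslant Q}e(\theta h)/(2\pi i h)+O(\min\{1,1/(Q\lVert\theta\rVert)\})$. As you say, this is harmless in Lemmas \ref{lem6.3} and \ref{lem8.1}, where the triangle inequality is applied immediately.

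One simplification: the step you single out as the only real obstacle is not needed in that generality. If $Q\lVert\theta\rVert\leqslant1$, then $|\sin(2\pi n\theta)|=|\sin(2\pi n\lVert\theta\rVert)|\leqslant 2\pi n\lVert\theta\rVert$, so
\[ \Bigl|\sum_{1\leqslant n\leqslant Q}\frac{\sin(2\pi n\theta)}{\pi n}\Bigr|\leqslant 2Q\lVert\theta\rVert\leqslant 2. \]
This lets you drop the uniform boundedness of all partial sums of the sawtooth series. That fact is true, via the Dirichlet kernel, but it is stronger than what is required.

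In Step 2, the only outside input is that the symmetric partial sums converge to $\psi(\theta)$ for $\theta\notin\Z$, not merely that they converge. This is the classical Dirichlet theorem for piecewise smooth functions. With it, your Abel summation bound $\ll 1/(Q\lVert\theta\rVert)$ completes the proof.
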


Given positive integers $y,n$, we will need the following bounds for Ramanujan sums
\begin{equation} \label{eqn2.2}
|\sum_{ t \in (y/\mathbb{Z}y)^{\times}} e(nt/y)| = |\sum_{d| (n,y)} d \mu(y/n)| \leq \sum_{d|(n,y)} d,
\end{equation}
see \cite[Theorem 8.6]{Ap1976}. We will also need the following consequence of the Weil bounds on Kloosterman sums recorded by Hooley \cite[Lemma 2]{Hoo1957}.

\begin{lemma} \label{lem2.3}
    Let $a,b,u$ be integers with $u>0$. Then 
    \[ \bigg|\sum_{\substack{1 \leq  x < u, \\ (x,u) = 1}} e( (ax + b \overline{x})/u )\bigg| \leq d(u) (b,u)^{1/2} u^{1/2},\]
    where $d(u)$ denotes the number of divisors of $u$, and $\overline{x}$ denotes the inverse of $x$ in $(\mathbb{Z}/q\mathbb{Z})^{\times}$.
\end{lemma}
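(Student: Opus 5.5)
This is Hooley's Lemma 2, so the plan is to derive it from the Weil bound for complete Kloosterman sums via multiplicativity. Write $S(a,b;u)=\sum_{x \in (\mathbb{Z}/u\mathbb{Z})^\times} e((ax+b\overline{x})/u)$. The sum in the statement runs over $1\le x<u$ with $(x,u)=1$, which is exactly a complete set of reduced residues (for $u=1$ the sum is empty, or equal to $1$ by convention, and the bound is trivial). So the claim is $|S(a,b;u)|\le d(u)(b,u)^{1/2}u^{1/2}$.

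\textbf{Step 1: twisted multiplicativity.} If $u=u_1u_2$ with $(u_1,u_2)=1$, the Chinese remainder theorem gives
\[ S(a,b;u)=S(a\overline{u_2}, b\overline{u_2}; u_1)\, S(a\overline{u_1}, b\overline{u_1}; u_2), \]
where each inverse is taken modulo the other factor. Multiplying $b$ by a unit does not change $(b,u_i)$. Since $d(u)$, $(b,u)$ and $u^{1/2}$ are all multiplicative in $u$, it therefore suffices to prove the following prime-power bound: for any $a,b$,
\[ |S(a,b;p^k)|\le (k+1)(b,p^k)^{1/2}p^{k/2}. \]

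\textbf{Step 2: prime powers.} I would split into cases.
\begin{enumerate}[(i)]
\item If $p\nmid b$ and $k=1$, Weil's bound gives $|S|\le 2p^{1/2}$ when $p\nmid a$. When $p\mid a$, $S$ is a Ramanujan sum of absolute value $1$.
\item If $p\nmid b$ and $k\ge2$, the elementary stationary-phase evaluation applies. Write $x=y(1+p^{\lceil k/2\rceil}z)$. The sum then vanishes unless $ay^2\equiv b$ holds to a suitable power of $p$, and each such solution contributes at most $p^{k/2}$ (with a factor of at most $2$ for an odd-$k$ quadratic Gauss sum). This gives $|S|\le 2p^{k/2}\le (k+1)p^{k/2}$. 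For $p=2$ the same computation goes through, at the cost of a bounded number of extra solutions, which is still within $k+1$ for $k\ge2$; this may need to be checked separately.
\item If $p^j\| b$ with $j\ge1$, use symmetry. If also $p\mid a$, remove the common factor $p$ via $S(pa',pb';p^k)=p\,S(a',b';p^{k-1})$, and induct. If $p\nmid a$, the substitution $x\mapsto \overline{x}$ swaps the roles of $a$ and $b$, reducing to case (ii) with the coefficient $a$ coprime to $p$. That gives $|S|\le 2p^{k/2}$, which is even stronger.
\end{enumerate}
For the reduction in (iii), each peeled factor of $p$ is compensated by $(b,p^k)^{1/2}$: one gets $p\cdot(k)p^{(k-1)/2}(b/p,p^{k-1})^{1/2}\le (k+1)p^{k/2}(b,p^k)^{1/2}$, since $p^{1/2}\cdot p^{(j-1)/2}=p^{j/2}$. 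When $b\equiv0 \pmod{p^k}$, $S$ is a Ramanujan sum, bounded by $(a,p^k)\le p^{k}=(b,p^k)^{1/2}p^{k/2}$.

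\textbf{Main obstacle.} The main obstacle is case (ii): the careful evaluation of Kloosterman sums modulo prime powers, especially odd $k$ and $p=2$, while keeping the constant within $k+1$. In practice I would cite the classical evaluation (Sali\'e, Estermann) rather than redo it, as Hooley does. The deep input is Weil's bound for $k=1$, which I would simply quote.
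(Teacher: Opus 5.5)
Your proposal is correct and is essentially the paper's approach. The paper gives no argument of its own but quotes the bound from Hooley as a consequence of Weil's bound, and your reduction (twisted multiplicativity, Weil at primes, the Sali\'e--Estermann evaluation at higher prime powers, the swap $x\mapsto\overline{x}$, and peeling off common factors of $p$) is the standard derivation behind that citation.

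Two small points to tidy. First, $S(pa',pb';p^k)=p\,S(a',b';p^{k-1})$ needs $k\geq 2$; your Ramanujan-sum remark covers $k=1$. Second, for odd $k$ and odd $p$, the $z$-sum alone (with exponent $\lceil k/2\rceil$) only gives $2p^{(k+1)/2}$. The missing factor $\sqrt{p}$ comes from the quadratic Gauss sum over the $p$ lifts of each critical point $y$ modulo $p^{\lfloor k/2\rfloor}$.
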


Applying the divisor bound, we see that the right-hand side is bounded by $O_{\eps}((b,c)^{1/2} c^{1/2 + \eps})$. This estimate can be used to derive the following classical bound on incomplete Kloosterman sums due to Hooley \cite[Lemma 3]{Hoo1963}. 

\begin{lemma} \label{lem2.4}
    Let $I$ be an interval, let $u,r$ be non-zero integers such that $u, v \leq N^{10}$. Then 
    \[ \sum_{v \in I, (v,u) =1} e( r\overline{v}/u) \ll N^{\eps} \frac{(r,u)^{1/2}}{u^{1/2}} (|I| + u \log u).  \]
\end{lemma}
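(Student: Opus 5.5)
This is the classical completion argument: detect the interval by additive characters, use Lemma \ref{lem2.3} for the complete sums, and sum the Fourier coefficients of the interval.

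\emph{Reduction to residues.} First, if $|I| \geq u$, split $I$ into $\lfloor |I|/u\rfloor$ complete residue blocks plus one leftover interval of length less than $u$. Over a complete block, the sum is the Ramanujan sum $\sum_{t\in(\Z/u\Z)^\times} e(rt/u)$. By \eqref{eqn2.2} this has modulus at most $\sigma((r,u)) \ll N^{\eps}(r,u)$. That is not of the claimed shape $(r,u)^{1/2}u^{1/2}$, so it is better to treat the full interval directly by completion, as follows, rather than split off blocks. Write $I\cap\Z = \{v: A<v\leq B\}$. Each $v$ is counted via its residue class $x \bmod u$, with multiplicity
\[ n(x)=\#\{v\in I: v\equiv x \mmod u\}. \]
Thus the sum equals $\sum_{x \bmod u,\,(x,u)=1} n(x)\,e(r\overline{x}/u)$.

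\emph{Completion.} Expand $n(x)$ in additive characters:
\[ n(x) = \frac1u\sum_{a \bmod u} e(-ax/u)\sum_{v\in I} e(av/u). \]
Substituting, the sum becomes
\[ \frac1u\sum_{a\bmod u}\Big(\sum_{v\in I}e(av/u)\Big)\,K(-a,r;u), \qquad K(-a,r;u)=\sum_{(x,u)=1}e\big((-ax+r\overline{x})/u\big). \]
Lemma \ref{lem2.3} gives $|K(-a,r;u)|\leq d(u)(r,u)^{1/2}u^{1/2}\ll N^{\eps}(r,u)^{1/2}u^{1/2}$ uniformly in $a$, using $u\leq N^{10}$ for the divisor bound.

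\emph{Bounding the Fourier coefficients.} For $a\equiv0$ the inner geometric sum is at most $|I|+1$. For $a\not\equiv 0$ it is at most $\min(|I|+1,\,1/(2\n{a/u}))$. Summing over $a$ with representatives $|a|\leq u/2$ gives
\[ \sum_{a\not\equiv 0}\frac{u}{2|a|}\ll u\log u. \]
Altogether the sum is
\[ \ll \frac1u\,N^{\eps}(r,u)^{1/2}u^{1/2}\,(|I|+1+u\log u), \]
which is $N^{\eps}(r,u)^{1/2}u^{-1/2}(|I|+u\log u)$ after absorbing the $+1$ into $u\log u$ (when $u=1$ the sum is trivially at most $|I|+1$, which also fits the bound).

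\emph{Expected difficulty.} Nothing here is deep. The only care needed is bookkeeping: the sign and role of $a$ inside the Kloosterman sum (Lemma \ref{lem2.3} is uniform in the linear coefficient, so this is harmless), and ensuring $d(u)\ll N^{\eps}$ through the size hypothesis on $u$.
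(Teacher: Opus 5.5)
Your proposal is correct and is essentially the paper's own proof: the paper likewise writes the sum as $\sum_{x\in(\Z/u\Z)^{\times}} f_I(x)e(r\overline{x}/u)$ with $f_I(x)=|I\cap(x+u\Z)|$, expands $f_I$ in additive characters modulo $u$, applies Lemma \ref{lem2.3} uniformly in the linear coefficient $\xi$, and then sums $|\hat{f_I}(\xi)|\ll\min\{|I|,\n{\xi/u}^{-1}\}$ over $\xi$ to obtain $|I|+u\log u$. Your opening block-splitting aside is unnecessary, and you were wrong to call its bound ``not of the claimed shape'': since $(r,u)/u\leq((r,u)/u)^{1/2}$, the complete blocks would in fact satisfy the stated bound. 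This does no harm, because you discard that route.
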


For the sake of completeness, we record its proof below.

\begin{proof}[Proof of Lemma \ref{lem2.4}]
For every $z \in \Z/u\Z$, define $f_I(z) = |I \cap (z + u \cdot \Z)|$. Moreover, for any $f : \Z/u\Z \to \mathbb{C}$, define its Fourier transform $\hat{f}: \Z/u\Z \to \mathbb{C}$ as
\[ \hat{f}(\xi) = \sum_{x \in \Z/u\Z} f(x) e(x \xi/u).  \]
Applying orthogonality gives
\begin{equation} \label{eqn2.3}
f_I(x) = u^{-1}\sum_{\xi =0}^{u-1} \hat{f_I}(\xi) e(- \xi x/u).
\end{equation}
Moreover, for any $0 \leq \xi \leq u-1$, we see that
\[ \hat{f_I}(\xi) = \sum_{x \in \Z/u\Z} f_I(x) e(x \xi/u) = \sum_{y \in I} e(y \xi/u)  . \]
Since $I$ is an interval, we deduce that
\begin{equation} \label{eqn2.4}
|\hat{f_I}(\xi)| \ll \min \left\{  |I|, \n{\xi/u }^{-1} \right\}. 
\end{equation}

We apply \eqref{eqn2.3} to obtain
\begin{align*}
   \sum_{v \in I, (v,u) = 1} e(r \overline{v}/u) 
    = \sum_{x \in (\mathbb{Z}/u \mathbb{Z})^{\times}} f_I(\xi) e( r \overline{x}/u) = u^{-1}\sum_{x \in (\mathbb{Z}/u \mathbb{Z})^{\times}}  e(r \overline{x}/u) \sum_{\xi =0}^{u-1} \hat{f_I}(x) e(- \xi x/u) .
\end{align*}
Interchanging the order of sums, we find that the right-hand side is
\[  u^{-1} \sum_{\xi = 0}^{u-1} \hat{f_I}(\xi) \sum_{x \in (\mathbb{Z}/u \mathbb{Z})^{\times}} e( (r\overline{x} - \xi x)/u) \ll u^{-1} \sum_{\xi =0}^{u-1} |\hat{f_I}(\xi)| (r,u)^{1/2} u^{1/2 + \eps},   \]
where this inequality follows from applying the triangle inequality and Lemma \ref{lem2.3}. Thus, we conclude from \eqref{eqn2.4} that
\begin{align*}
    \sum_{v \in I, (v,u) = 1} e(r \overline{v}/u)  & \ll N^{\eps}\frac{(r,u)^{1/2}}{u^{1/2}} \sum_{\xi =0}^{u-1} |\hat{f_I}(\xi)| \ll N^{\eps} \frac{(r,u)^{1/2}}{u^{1/2}} ( |I| + \sum_{\xi =1}^{u-1} \n{ \xi/u}^{-1} ) \\
    & \ll N^{\eps} \frac{(r,u)^{1/2}}{u^{1/2}} ( |I| + u \log u ). \qedhere
\end{align*}
\end{proof}

\section{Additive-type solutions} \label{sec3}

We commence our investigation into solutions of the quadratic determinant equation by studying additive-type solutions. Suppose then that $a,b,x,y\in[N]$ satisfy
\begin{equation}\label{eqn3.1}
    ax + by = h.
\end{equation}
Observe that the greatest common divisor $d\in[N]$ of $x$ and $y$ must divide $h$ for this equation to be valid. Dividing both sides by $d$ leads to the equation
\begin{equation*}
    au + bv = h/d\qquad (a,b\in[N],\; u,v\in[N/d],\; \gcd(u,v) =1).
\end{equation*}
Our goal therefore is to compute the following sum:
\begin{equation*}
    \sum_{\substack{d|h \\ d \leqslant N}} \sum_{\substack{1 \leq u,v \leq N/d,\\ (u,v) = 1}} \sum_{1 \leq a,b \leq N} \1_{au + bv = h/d} .
\end{equation*}

Notice that \eqref{eqn3.1} implies $bv \equiv  (h/d) \mmod{u}$. Given coprime $u$ and $v$, let $b_0 \in [u]$ be the unique integer satisfying $b_0 v \equiv (h/d) \mmod{u}$ and let $a_0 = (h/d - b_0v)/u$. We then see that any $a,b \in \mathbb{Z}$ satisfying $au + bv = h/d$ must satisfy $b = b_0 + su$ and $a= a_0 - sv$ for some $s\in\Z$. Thus, the number of such $a,b\in[N]$ is equal to the total number of integers $s$ in the range
\begin{equation}\label{eqn3.2}
U := \max\left\{ \frac{1- b_0}{u}, \frac{a_0 - N}{v} \right\} \leq s \leq \min \left\{ \frac{N- b_0}{u}, \frac{a_0 - 1}{v}\right\} =:  V.
\end{equation}
The number of such $s$ is exactly 
\begin{equation*}
    (  \lfloor V \rfloor- \lfloor U \rfloor + \1_{\mathbb{Z}}(U) ) \1_{V \geq U} = (V-U)\1_{V \geq U} + \1_{\mathbb{Z}}(U)\1_{V \geq U} - ( \psi(V) - \psi(U))\1_{V \geq U},
\end{equation*}
where $\psi(x) := x - \lfloor x \rfloor - 1/2$ is the sawtooth function.

We have therefore reduced our problem to studying $\cM(h,N) + \cE(h,N)$, where 
\begin{equation}\label{eqn3.3}
    \cM(h,N):=\sum_{\substack{d|h \\ d \leqslant N}} \sum_{\substack{1 \leq u,v \leq N/d,\\ (u,v) = 1}}(V-U)\1_{V \geq U}
\end{equation}
is the \emph{main term} and
\begin{equation}\label{eqn3.4}
    \cE(h,N):=\sum_{\substack{d|h \\ d \leqslant N}} \sum_{\substack{1 \leq u,v \leq N/d,\\ (u,v) = 1}}(\1_{\mathbb{Z}}(U)\1_{V \geq U} - ( \psi(V) - \psi(U))\1_{V \geq U}),
\end{equation}
is the \emph{error term}, with $U$ and $V$ being as defined in \eqref{eqn3.2}. We postpone the analysis of $\cE(h,N)$ until \S\ref{sec6}. The remainder of this section is devoted to examining the main term $\cM(h,N)$. We proceed to show that this discrete sum can be approximated by an integral.\\

Observe that
\begin{equation*}
    V-U=\min\left\{\frac{N}{u} ,  \frac{h/d}{uv} - \frac{1}{v} \right\} -  \max\left\{ \frac{1}{u}, \frac{h/d}{uv}-\frac{N}{v} \right\}  = V'- U',
\end{equation*}
where $V' = V + b_0/u$ and $U' = U + b_0/u$. Since our expressions for $V'$ and $U'$ do not involve $a_0$ or $b_0$, we can extend the definitions of $V'$ and $U'$ to all positive real numbers. Explicitly, for all $\alpha,\beta\in(0,\infty)$, we define
\begin{equation*}
    V'=V'(\alpha,\beta,h/d,N) = \min\left\{\frac{N}{\alpha} ,  \frac{h/d}{\alpha\beta} - \frac{1}{\beta} \right\},\quad U'=U'(\alpha,\beta,h/d,N) :=\max\left\{ \frac{1}{\alpha}, \frac{h/d}{\alpha\beta}-\frac{N}{\beta} \right\},
\end{equation*}
and
\begin{equation*}
    F_N(\alpha,\beta,h/d) = \max\{0,V'(\alpha,\beta,h/d,N) - U'(\alpha,\beta,h/d,N)\}.
\end{equation*}
Before moving on, we record the trivial inequalities
\begin{equation}\label{eqn3.5}
   0\leqslant  F_N(\alpha,\beta,h/d) \leqslant \frac{N}{\max\{\alpha,\beta\}}.
\end{equation}

Our expression for the main term has therefore become
\begin{equation*}
     \cM(h,N)=\sum_{\substack{d|h \\ d \leqslant N}} \sum_{\substack{1 \leq u,v \leq N/d,\\ (u,v) = 1}} F_N(u,v,h/d) .
\end{equation*}
As we would like to replace the discrete sum of $F_N$ by an integral over real numbers, we need to remove the coprimality condition. This is achieved via M\"{o}bius inversion. Starting from the classical identity
\begin{equation*}
    \sum_{k\mid n}\mu(k) = \1_{n=1} \quad (n\in\N),
\end{equation*}
we have
\begin{align*}
    \cM(h,N)&=\sum_{\substack{d|h \\ d \leqslant N}} \sum_{1 \leq u,v \leq N/d} F_N(u,v,h/d)\sum_{k\mid (u,v)}\mu(k) =  \sum_{\substack{d|h \\ d \leqslant N}} \sum_{k\leqslant N/d}\mu(k)\sum_{1\leqslant u,v\leqslant N/dk}F_N(ku,kv,h/d)\\
    &= \sum_{\substack{d|h \\ d \leqslant N}} \sum_{k\leqslant N/d}\frac{\mu(k)}{k}\sum_{1\leqslant u,v\leqslant N/dk}F_N(u,v,h/dk).
\end{align*}

For fixed $d\mid h$ and $k\leqslant N/d$ with $d\leqslant N$, we now investigate the sum
\begin{equation*}
    \sum_{1\leqslant u,v\leqslant N/dk}F_N(u,v,h/dk).
\end{equation*}
As indicated previously, by extending the definition of $F_N$ to all real numbers, we can replace this discrete sum with an integral
\begin{equation*}
   S_N(h/dk)= \int_1^{N/dk} \int_1^{N/dk} F_N(\alpha, \beta, h/dk)  \d \beta \d \alpha .
\end{equation*}
Recalling the trivial bound \eqref{eqn3.5} for $F_N$, we see that
\begin{equation*}
    S_N(h/dk) = \int_1^{\lfloor N/dk \rfloor + 1} \int_1^{\lfloor N/dk \rfloor + 1} F_N(\alpha, \beta, h/dk)  \d \beta \d \alpha  + O(N\log N),
\end{equation*}
and so
\begin{align*}
    \bigg\lvert S_N(h/dk)-\sum_{1\leqslant u,v\leqslant N/dk}F_N(u,v,h/dk) \bigg\rvert & \\ \ll N\log N+\sum_{1 \leq u,v \leq N/dk} & \int_{u}^{u+1} \int_{v}^{v+1} |  F_N(u,v,h/dk) -   F_N(\alpha, \beta, h/dk) |  \d \alpha \d \beta.
\end{align*}
We now show that this sum of integrals is much smaller than the expected size of the main term.

\begin{lemma} \label{lem3.1}
    Let $N\in\N\setminus\{1\}$ and $h\in\Z$ with $0\leqslant h\leqslant 2N^2$. Let $d,k\in[N]$ with $d\mid h$ and $k\leqslant N/d$. We have
    \[  \sum_{1 \leq u,v \leq N/dk} \int_{u}^{u+1} \int_{v}^{v+1} |  F_N(u,v,h/dk) -   F_N(\alpha, \beta, h/dk) |  \d \alpha \d \beta   \ll N (\log N)^2.  \]
\end{lemma}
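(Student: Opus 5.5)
Write $H = h/dk$ and $M = N/dk$. The approach is to bound the oscillation of $F_N(\cdot,\cdot,H)$ on each unit square $[u,u+1]\times[v,v+1]$ by a mean-value estimate where $F_N$ is smooth. Where it is not smooth, we use the trivial bound \eqref{eqn3.5} on a thin exceptional set.

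First, note that $F_N(\alpha,\beta,H)=\max\{0,V'-U'\}$, and both $V'$ and $U'$ are a min/max of two explicit smooth functions. Hence $F_N$ is continuous and piecewise smooth on $[1,\infty)^2$. Each piece is one of the four functions
\[ \frac{N-1}{\alpha},\qquad \frac{H}{\alpha\beta}-\frac{1}{\beta}-\frac{1}{\alpha},\qquad \frac{N}{\alpha}+\frac{N}{\beta}-\frac{H}{\alpha\beta},\qquad \frac{N-1}{\beta}, \]
or it is $0$. Being a min/max of these, $F_N$ is Lipschitz on each segment, so its variation on a square is controlled by the sup of the gradients of the active pieces there. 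For the first and fourth pieces the gradient is $\ll N/\alpha^2$ and $\ll N/\beta^2$. For the two middle pieces the gradient is bounded by $H/(\alpha^2\beta)+N/\alpha^2$ and the symmetric expression, and these are only relevant where that piece is active.

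The key observation is that activity of a piece forces constraints. For example, the second piece can only be positive if $H/(\alpha\beta)\le V'\le N/\alpha$, that is $H\le N\beta$. Similarly, the third piece is active only where $H/(\alpha\beta)-N/\beta\ge 1/\alpha$, which gives $H\le N\alpha+\ldots$, so $H/(\alpha^2\beta)\ll N/\alpha$. Using these constraints, on any square where $F_N>0$ the gradient is $\ll N/\min\{\alpha,\beta\}\cdot(1/\alpha+1/\beta)\ll N/(\alpha\beta)+N/\min(\alpha,\beta)^2$. I will verify this by writing each active piece and its partial derivatives and using the defining inequality of the piece's active region to replace $H$ by $N\cdot\max(\alpha,\beta)$.

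Summing over $u,v\le M$ then gives
\[ \sum_{u,v}\Big(\frac{N}{uv}+\frac{N}{\min(u,v)^2}\Big)\ll N(\log N)^2 + N\sum_{u\le v\le M}u^{-2}\ll N(\log N)^2+N\log N. \]
The middle term is where the case analysis bites: the bound $N/\min^2$ summed over $v\ge u$ is $\ll N M/u^2$ per $u$, which is too large. So I must instead obtain the sharper gradient bound $\ll N/(\alpha\beta)$ plus $N/\alpha^2$ only in the regime $\beta\ll \alpha$ (and symmetrically). I expect this to hold: $F_N\le N/\max$ together with the piece structure should give derivatives $\ll N/(\alpha\max(\alpha,\beta))$ in $\alpha$. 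Squares where the gradient is large but $F_N$ is near the boundary $V'=U'$ are handled by the fact that the truncation $\max\{0,\cdot\}$ is 1-Lipschitz, so this causes no problem.

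The main obstacle is establishing the gradient bound $|\partial_\alpha F_N|\ll N/(\alpha\max(\alpha,\beta))$, and symmetrically in $\beta$, uniformly in $H\le 2N^2$ on the region where $F_N>0$. This bound sums to $N(\log N)^2$. I will first try to derive it from the active-region inequalities piece by piece. For the mixed pieces this means showing that $V'\ge U'>0$ forces $H/(\alpha\beta)\ll N/\max(\alpha,\beta)$, which follows from $H/(\alpha\beta)-1/\beta\le N/\alpha$ and the analogous inequality with the roles of $\alpha$ and $\beta$ swapped. If some square straddles the region $F_N>0$, the oscillation is at most $\sup F_N\le N/\max(u,v)$ there, and the boundary curves cross only $O(M)$ squares along lines where the above bounds already suffice.
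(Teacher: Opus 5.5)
Your framework (bound the oscillation of $F_N(\cdot,\cdot,H)$ on each unit square by the gradient of whichever piece is active, using that $F_N$ is continuous and piecewise smooth) is sound. However, the bound you single out as the main obstacle, $|\partial_\alpha F_N|\ll N/(\alpha\max\{\alpha,\beta\})$, is false, and the justification you sketch for it uses an inequality that does not hold.

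Consider the piece $N/\alpha+N/\beta-H/(\alpha\beta)$, where $V'=N/\alpha$ and $U'=H/(\alpha\beta)-N/\beta$. Its branch conditions are $H\geq N\beta+\alpha$ and $H\geq N\alpha+\beta$; your stated constraint for this piece has the inequality reversed. Positivity gives $H<N(\alpha+\beta)$. Hence $H\asymp N\max\{\alpha,\beta\}$ and $H/(\alpha\beta)\asymp N/\min\{\alpha,\beta\}$, not $N/\max\{\alpha,\beta\}$.

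The condition $V'\geq U'$ is equivalent only to $\alpha+\beta\leq H\leq N(\alpha+\beta)$. The inequality $H/(\alpha\beta)-1/\beta\leq N/\alpha$ that you invoke does not follow from it. It is precisely the condition that $V'$ equals $H/(\alpha\beta)-1/\beta$ rather than $N/\alpha$, so it fails on this piece.

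Concretely, take $dk=1$, $h=N^2$, $\beta=1$ and $N-1<\alpha\leq N-1/N$. Then $F_N=N-N(N-1)/\alpha>0$ and $\partial_\alpha F_N=N(N-1)/\alpha^2\asymp 1$, whereas $N/(\alpha\max\{\alpha,\beta\})\asymp 1/N$.

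The target you stated first is the right one, and in fact no $N/\max$ refinement is needed. The uniform bound $|\nabla F_N|\ll N/(\alpha\beta)$ holds wherever $F_N>0$, provided you use the exact branch conditions. On the third piece you must also keep the cancellation rather than splitting the derivative as $H/(\alpha^2\beta)+N/\alpha^2$; when $\alpha\ll\beta$ that split gives only the $N/\alpha^2$ you rightly rejected. The piece-by-piece bounds are as follows.
\begin{itemize}
\item On the piece $N/\alpha+N/\beta-H/(\alpha\beta)$, $\partial_\alpha F_N=(H-N\beta)/(\alpha^2\beta)$ and $\alpha\leq H-N\beta<N\alpha$, so $0<\partial_\alpha F_N<N/(\alpha\beta)$. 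The bound for $\partial_\beta$ is symmetric.
\item On the piece $(H-\alpha-\beta)/(\alpha\beta)$, the conditions $U'=1/\alpha$ and $V'=H/(\alpha\beta)-1/\beta$ read $H-\beta\leq N\alpha$ and $H-\alpha\leq N\beta$. These bound $|\partial_\alpha F_N|=(H-\beta)/(\alpha^2\beta)$ and $|\partial_\beta F_N|=(H-\alpha)/(\alpha\beta^2)$ by $N/(\alpha\beta)$.
\item On the pieces $(N-1)/\alpha$ and $(N-1)/\beta$, the branch conditions force $\beta\leq\alpha$ and $\alpha\leq\beta$ respectively, using $N\geq 2$. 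So the gradient there is at most $N/\max\{\alpha,\beta\}^2\leq N/(\alpha\beta)$.
\end{itemize}
Summing $N/(uv)$ over $u,v\leq N/dk$ then gives $N(\log N)^2$ directly. There is no exceptional set, and no separate treatment of squares meeting the boundary of $\{F_N>0\}$ is needed.

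Repaired in this way, your argument is a uniform version of the paper's proof. The paper instead compares $u,v$ with the threshold $H/N$ and removes, via the trivial bound $F_N\leq N/\max\{\alpha,\beta\}$, the squares with $u$ or $v$ in a bounded interval around it. It then Taylor-expands in each of the four resulting regimes, obtaining $N/(uv)$ in two of them, and $N/v^2$ (with $u\leq v$) or $N/u^2$ (with $v\leq u$) in the other two.
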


\begin{proof}
 Let $I = [h/dk - 2, h/dk + 2]$. Using \eqref{eqn3.5}, the contribution of the cases when either $u$ or $v$ lies in $I$ is
    \begin{align*} &  \ll  \sum_{u \in I} \sum_{1 \leq v \leq N/dk} \int_{u}^{u+1} \int_{v}^{v+1} |  F_N(u,v,h/dk) -   F_N(\alpha, \beta, h/dk) |  \d \alpha \d \beta \\
    & \ll N  \sum_{u \in I} \sum_{1 \leq v \leq N/dk}  \int_{u}^{u+1}  \int_{v}^{v+1} \left(\frac{1}{v} + \frac{1}{\beta} \right) \d \beta \d \alpha \ll  N  \sum_{u \in I} \sum_{1 \leq v \leq N/dk} \frac{1}{v}  \ll N \log N.
     \end{align*}
If neither $u$ nor $v$ lie in $I$, then we note that
\[\frac{N}{u} - \frac{h/dk}{uv} \leq \frac{ - 1}{v} \Leftrightarrow Nv  - h/dk \leq - u \Leftrightarrow v \leq h/(dkN) \]
and
\[  \frac{1}{u} - \frac{h/dk}{uv} \geq \frac{- N}{v}  \Leftrightarrow   Nu + v \geq h/(dk) \Leftrightarrow  u \geq h/(dkN)  .  \]

We now perform a case by case analysis. Our first case is when  $u, v \notin I$ satisfy $u,v \leq h/(dkN)$.  Then 
\[ F_N(u,v,h/dk) 
 = \max\left\{0, \frac{N}{u} + \frac{N}{v} - \frac{h/dk}{uv} \right\} = \max\left\{0, \frac{N(u+v - h/(dkN))}{uv} \right\}.\]
 For the maximum to be positive on the right-hand side, we can assume that $u + v \geq h/(dkN)$. Now, let $\alpha, \in [u,u+1)$ and $\beta \in [v, v+1)$ be real numbers. Then a standard Taylor expansion can be used to give the upper bound
\begin{align*}
|F_N(u,v,h/dk)  - F_N(\alpha, \beta, h/dk)| 
& \ll N \left(  \frac{h/(dkN)-v}{u^2 v}  + \frac{h/(dkN)-u}{v^2 u} \right) .
\end{align*}
Using the fact that $u+v \geq h/(dkN)$, we see that the right-hand side is bounded above by $O(N/(uv))$. Thus, we have
\begin{align*}
    \sum_{\substack{1 \leq u,v \leq h/(dkN), \\ u,v \notin I}} & \int_{u}^{u+1} \int_v^{v+1}| F_N(u,v, h/dk) -  F_N(\alpha,\beta,h/dk) | \  \d \alpha \d \beta \\
    & \ll  \sum_{\substack{1 \leq u,v \leq h/(dkN), \\ u,v \notin I}} \frac{N}{uv} \ll N (\log N)^2.
\end{align*}

We now proceed to our second case where $u,v \notin I$ are such that $u \leq h/(dkN)$ and $v > h/(dkN)$. In this case, we have
\[ F_N(u,v,h/dk) = \frac{N-1}{v} \]
and so, given  $\alpha, \in [u,u+1)$ and $\beta \in [v, v+1)$, we see that
\[ F_N(u,v,h/dk) - F_N(\alpha, \beta, h/dk) = (N-1)(v^{-1} - \beta^{-1}) \ll \frac{N}{v^2}. \]
Thus, 
\begin{align*}
\sum_{\substack{1 \leq u \leq h/(dkN) < v \leq N/dk, \\u,v \notin I}   } & \int_u^{u+1} \int_v^{v+1} | F_N(u,v, h/dk) - F_N(\alpha, \beta, h/dk) |  \d \alpha \d \beta  \\
& \ll  N \sum_{\substack{1 \leq u \leq h/(dkN) < v \leq N/dk, \\u,v \notin I}} \frac{1}{v^2} \ll N \sum_{h/(dkN) < v \leq N/dk} \frac{1}{v} \ll N \log N.
\end{align*}
A similar analysis yields the same bound for the third case where $u,v \notin I$ are such that $u > h/(dkN)$ and $v \leq  h/(dkN)$.

The final case to consider is when $u, v \notin I$ are such that $u,v > h/(dkN)$. Here, we find that
\[ F_N(u,v,h/dk)= \max\left\{0, \frac{h/(dk) - u - v}{uv} \right\}. \]
This quantity is positive only when $u + v < h/(dk)$. Thus, letting $u, v \notin I$ be integers satisfying $u, v > h/(dkN)$ and $u + v < h/(dk)$, and letting $\alpha \in [u,u+1)$ and $\beta \in [v,v+1)$, we may apply Taylor expansion to deduce that
\[ |F_N(u,v,h/dk) - F_N(\alpha, \beta, h/dk)| \ll \frac{h/dk - v}{u^2 v} + \frac{h/dk - u}{uv^2} \ll \frac{h/dk}{uv} \left( \frac{1}{u} + \frac{1}{v} \right) \ll  \frac{N}{uv},\]
As in the first case, we may now deduce that
\[  \sum_{\substack{h/(dkN) \leq u, v \leq N/dk, \\ u+ v < h/(dk), \\ u,v \notin I }} \int_{u}^{u+1} \int_v^{v+1} |F_N(u,v,h/dk) - F_N(\alpha, \beta, h/dk)| \d \alpha \d \beta \ll N (\log N)^2,  \]
and the proof is complete.
\end{proof}

Lemma \ref{lem3.1} therefore delivers the following expression for the main term:
\begin{equation}\label{eqn3.6}
    \cM(h,N)= \sum_{\substack{d|h \\ d \leqslant N}} \sum_{k\leqslant N/d}\frac{\mu(k)}{k}S_N(h/dk) + O_\eps(N^{1+\eps}).
\end{equation}
Our next task is to modify the integral $S_N(h/dk)$ to more closely align with the main term appearing in Theorem \ref{thm1.1}. Given $\lambda\in\R$, we define
\begin{equation}\label{eqn3.7}
     J(\lambda):=\int_{(0,1)^2} \frac{1}{\alpha \beta} \ \mu( (0, \beta] \cap (\lambda - (0, \alpha]  ) )\d \alpha \d \beta,
\end{equation}
where $\mu$ denotes the Lebesgue measure on the real line. It is immediate from the definition that $J(\lambda)=0$ for all $\lambda\in\R\setminus(0,2)$. Moreover, in view of the crude bound
    \begin{equation}\label{eqn3.8}
        \mu( (0, \beta] \cap (\lambda - (0, \alpha]  ) ) \leqslant\min\{\alpha,\beta,\sqrt{\alpha\beta}\} \qquad(0\leqslant\alpha,\beta\leqslant 1),
    \end{equation}
it is easy to verify that $J(\lambda)$ is well-defined and satisfies $0< J(\lambda)\leqslant 4$ for all $0<\lambda< 2$.

\begin{lemma} \label{lem3.2}
    Let $N,h,d,k$ be as in Lemma \ref{lem3.1}. Writing $h = \lambda N^2$, we have
    \[ S_N(h/dk) = \frac{N^2}{dk} J(\lambda)  + O(N(\log N)^2 ). \]
\end{lemma}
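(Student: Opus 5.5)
The plan is to make the integrand of $S_N(h/dk)$ geometric, rescale so that it matches the integrand of $J(\lambda)$ in \eqref{eqn3.7}, and then bound the two discrepancies this leaves. Write $M = N/dk$ and $H = h/dk = \lambda N M$.

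First, I would rewrite $F_N$ as a measure of an intersection of intervals. Multiplying $V'-U'$ by $\alpha\beta$ gives
\[ \alpha\beta(V'-U') = \min\{N\beta,\, H-\alpha\} - \max\{\beta,\, H-N\alpha\}. \]
Taking the positive part, this yields the exact identity
\[ F_N(\alpha,\beta,H) = \frac{1}{\alpha\beta}\, \mu\big( [\beta, N\beta] \cap [H - N\alpha, H-\alpha] \big). \]
Next I would substitute $\alpha = Ma$ and $\beta = Mb$, so the domain $[1,M]^2$ becomes $[1/M,1]^2$ and the Jacobian is $M^2$. Scaling the interval by $1/(NM)$ and using $H/(NM)=\lambda$, the intersection becomes $(b/N, b] \cap (\lambda - a, \lambda - a/N]$, up to endpoints of measure zero. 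Its measure is multiplied by $NM$. Collecting factors, I expect
\[ S_N(h/dk) = NM \int_{1/M}^{1}\int_{1/M}^{1} \frac{1}{ab}\, \mu\big( (b/N, b] \cap (\lambda - a, \lambda - a/N] \big) \d a \d b, \]
with prefactor $NM = N^2/(dk)$, which is exactly the factor in the lemma.

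It then remains to compare this with $J(\lambda)$, and there are two error sources.

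\emph{Truncation of the intervals.} The interval $(0,b]\cap(\lambda-a,\lambda)$ differs from the truncated one by measure at most $(a+b)/N$. On $[1/M,1]^2$ this changes the integrand by at most $(a+b)/(Nab) = N^{-1}(1/a+1/b)$. Integrating gives $\ll N^{-1}\log M$, and multiplying by $NM$ gives $\ll M\log M \ll N\log N$.

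\emph{Extension to $(0,1)^2$.} The region where $a<1/M$ or $b<1/M$ must be added back. By \eqref{eqn3.8}, the integrand of $J$ is at most $\min\{a,b\}/(ab) = 1/\max\{a,b\}$. Hence the region with $a<1/M$ contributes at most
\[ \int_0^{1/M}\int_0^1 \frac{\d b\,\d a}{\max\{a,b\}} \ll \int_0^{1/M}\big(1+\log(1/a)\big)\d a \ll \frac{1+\log M}{M}. \]
The region with $b<1/M$ is symmetric. After multiplying by $NM$, both are $\ll N\log N$.

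Combining these estimates gives $S_N(h/dk) = \frac{N^2}{dk}J(\lambda) + O(N\log N)$, which is stronger than the claimed $O(N(\log N)^2)$.

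The only step requiring real care is the first one. There I would check that the min/max in $V'-U'$, together with the positive part, really produce the measure of a closed-interval intersection. I would also check that after rescaling, the orientation $\lambda-(0,\alpha]$ of \eqref{eqn3.7} (with the roles of $\alpha,\beta$ possibly swapped) is matched. Since $J$ is symmetric under this relabelling after the substitution $(a,b)\mapsto(b,a)$, any mismatch there would be harmless. The error bounds are routine given \eqref{eqn3.8}.
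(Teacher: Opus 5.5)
Your proposal is correct and follows the same route as the paper. Both rewrite $F_N$ as $\frac{1}{\alpha\beta}$ times the measure of an interval intersection, rescale by $N/dk$, drop the $1/N$-truncations of the intervals at cost $O(N\log N)$, and then extend the domain to $(0,1)^2$. The only real difference is the strip $\alpha<dk/N$ (the paper's $I_2$): the paper evaluates it explicitly, splitting according to whether $\lambda\geqslant dk/N$, and obtains $O(N(\log N)^2)$, whereas your bound $\min\{a,b\}/(ab)=1/\max\{a,b\}$ handles it directly, uniformly in $\lambda$, and gives the slightly sharper error $O(N\log N)$.
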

\begin{proof}
Recalling \eqref{eqn3.5} and applying a change of variables, we see that
\begin{align*}
S_N(h/dk) & = \frac{N}{dk} \int_{dk/N}^1 \int_{dk/N}^1 F_N( \alpha, \beta , h/N)  \d \beta \d \alpha \\
& = \frac{N}{dk} \int_{dk/N}^1 \int_{dk/N}^1 \max\left\lbrace 0,\min\left\{\frac{N}{\alpha} ,  \frac{h}{N\alpha\beta} - \frac{1}{\beta} \right\}-\max\left\{ \frac{1}{\alpha}, \frac{h}{N\alpha\beta}-\frac{N}{\beta} \right\}\right\rbrace  \d \beta \d \alpha \\
& = \frac{N^2}{dk} \int_{dk/N}^1 \int_{dk/N}^1 \frac{1}{\alpha \beta} \max \left\{  0, \min\left\{ \beta, \frac{h}{N^2}-\frac{\alpha}{N} \right\} - \max \left\{ \frac{\beta}{N} , \frac{h}{N^2}-\alpha\right\} \right\}  \d \beta \d \alpha.
\end{align*}
Now observe that for any $t,x_1,x_2,y_1,y_2\in\R$ with $x_1 \leqslant y_1$ and $x_2 \leqslant y_2$ we have
\[ \mu([x_1, y_1] \cap (t - [x_2, y_2]) ) = \max \{ 0, \min\{ y_1, t - x_2\} - \max\{ x_1, t - y_2 \} \} .\]
Recalling that $h=\lambda N^2$, we therefore infer
\[ S_N(h/dk) = \frac{N^2}{dk} \int_{dk/N}^1 \int_{dk/N}^1 \frac{1}{\alpha \beta} \ \mu \left( \left[ \frac{\beta}{N}, \beta \right] \cap \left(\lambda - \left[ \frac{\alpha}{N}, \alpha \right] \right) \right)  \d \beta \d \alpha.  \]
Furthermore, in view of the bound
\[ \frac{N^2}{dk} \int_{dk/N}^1 \int_{dk/N}^1  \frac{1}{\alpha N} \ d \beta d \alpha \ll \frac{N}{dk} ( \log N + \log (dk) ) \ll N \log N, \]
we may write
\begin{equation*}
    S_N(h/dk) = \frac{N^2}{dk} \int_{dk/N}^1 \int_{dk/N}^1 \frac{1}{\alpha \beta} \ \mu \left( \left[ 0, \beta \right] \cap \left(\lambda - \left[ 0, \alpha \right] \right) \right)  \d \beta \d \alpha +O(N\log N).
\end{equation*}
If we extend the range of both integrals from $[dk/N,1]$ to $(0,1)$, then
\begin{equation*}
    S_N(h/dk) = \frac{N^2}{dk} \int_{0}^1 \int_{0}^1 \frac{1}{\alpha \beta} \ \mu \left( \left[ 0, \beta \right] \cap \left(\lambda - \left[ 0, \alpha \right] \right) \right)  \d \beta \d \alpha +O(I_1+I_2+N\log N),
\end{equation*}
where
\begin{align*}
   I_1 &= \frac{N^2}{dk}\int_{dk/N}^1\int_0^{dk/N} \frac{1}{\alpha \beta} \ \mu( [0, \beta] \cap (\lambda - [0, \alpha] ) )\d \beta \d\alpha,\\
   I_2 &= \frac{N^2}{dk} \int_0^{dk/N} \int_{0}^1 \frac{1}{\alpha \beta} \ \mu( [0, \beta] \cap (\lambda - [0, \alpha] ) )\d \beta \d\alpha.
\end{align*}
Hence, to complete the proof, it only remains to show that $I_1, I_2=O(N(\log N)^2)$.

The crude bound \eqref{eqn3.8} is enough to establish
\begin{equation*}
    I_1\leqslant \frac{N^2}{dk}\int_{dk/N}^1 \frac{1}{\alpha}\int_0^{dk/N} 1 \d\beta\d\alpha =N\log(N/dk) \leqslant N\log N.
\end{equation*}
We now consider $I_2$. Recall that
\begin{equation*}
    \mu( [0, \beta] \cap (\lambda - [0, \alpha] ) ) = \mu([\max\{0,\lambda -\alpha\}, \min\{\beta,\lambda\}]).
\end{equation*}
Let $\ell = \min\{ \lambda,dk/N\}$. We may assume $\ell>0$, as otherwise $I_2=0$ and we are done. Thus, 
\begin{align*}
    \int_0^\ell \int_{0}^1 \frac{1}{\alpha \beta} \ \mu( [0, \beta] \cap (\lambda - [0, \alpha] ) )\d\beta \d \alpha  &= \int_0^\ell \frac{1}{\alpha}\left(\int_{\lambda -\alpha }^\lambda \frac{\alpha +\beta-\lambda}{\beta}\d\beta +\int_\lambda^1\frac{\alpha}{\beta}\d\beta \right)\d\alpha\\
    &= \ell(1-\log(\lambda)) + \lambda\int_0^{\ell/\lambda}\left(\frac{1}{t} - 1\right)\log(1-t)\d t.
\end{align*}
Observe that the function $t\mapsto (t^{-1}-1)\log(1-t)$ is continuous on $(0,1)$ and has finite limits as $t\to 0^+$ or $t\to 1^-$. Consequently, the integrand appearing in the final integral above is uniformly bounded by some absolute constant. Since we are in the case when $\lambda\neq 0$, we know that $N^{-2}\leqslant \lambda \leqslant 2$, and so
\begin{equation*}
    \int_0^\ell \int_{0}^1 \frac{1}{\alpha \beta} \ \mu( [0, \beta] \cap (\lambda - [0, \alpha] ) )\d\beta \d \alpha \ll \ell(1+|\log(\lambda)|)\ll \frac{dk}{N}\log(N).
\end{equation*}
If $\lambda\geqslant dk/N$, then $\ell=dk/N$ and this bound leads to the desired estimate $I_2=O(N\log N)$. If instead $0<\lambda<dk/N\leqslant 1$, then $\ell=\lambda\geqslant N^{-2}$ and the previous bound implies that
\begin{align*}
    I_2 + O(N\log N) &= \frac{N^2}{dk}\int_\lambda^{dk/N}\frac{1}{\alpha}\left(\int_0^{\lambda}\frac{\beta}{\beta}\d\beta+\int_\lambda^1\frac{\lambda}{\beta}\d\beta\right)\d\alpha\\
    &\ll \frac{\lambda N^2}{dk}(1+|\log(\lambda)|^2) \ll N(\log N)^2,
\end{align*}
as required. 
\end{proof}

By combining all of these observations, we have therefore established the following.

\begin{proposition}\label{prop3.3}
    Let $N,h\in\N$ with $1\leqslant h\leqslant 2N^2$, and write $h=\lambda N^2$. Then the number of solutions $(a,b,x,y)\in[N]^4$ to \eqref{eqn3.1} is equal to
    \begin{equation*}
        \frac {J(\lambda)}{\zeta(2)}N^2\sum_{d|h}\frac{1}{d} + \cE(h,N)+O_\eps(N^{1+\eps}),
    \end{equation*}
    where $J(\lambda)$ is given by \eqref{eqn3.7} and  $\cE(h,N)$ is defined in \eqref{eqn3.4} with $U$ and $V$ as in \eqref{eqn3.2}.
\end{proposition}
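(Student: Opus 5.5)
The count of solutions $(a,b,x,y)\in[N]^4$ to \eqref{eqn3.1} is, by the reduction at the start of this section, exactly $\cM(h,N)+\cE(h,N)$. So it remains to evaluate $\cM(h,N)$. We start from \eqref{eqn3.6}, which already absorbs the discrete-to-continuous error of Lemma \ref{lem3.1}, and insert Lemma \ref{lem3.2}. Since $\lambda = h/N^2$ is independent of $d$ and $k$, this gives
\begin{equation*}
    \cM(h,N) = J(\lambda) N^2 \sum_{\substack{d\mid h\\ d\leqslant N}}\frac{1}{d}\sum_{k\leqslant N/d}\frac{\mu(k)}{k^2} + O\bigg(N(\log N)^2\sum_{\substack{d\mid h\\ d\leqslant N}}\sum_{k\leqslant N/d}\frac{1}{k}\bigg) + O_\eps(N^{1+\eps}).
\end{equation*}
The second error term is at most $N(\log N)^3 d(h)\ll_\eps N^{1+\eps}$, using the divisor bound $d(h)\ll_\eps h^{\eps}\ll N^{2\eps}$.

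Next we complete the arithmetic sums. Since $\sum_{k}\mu(k)k^{-2}=1/\zeta(2)$ and the tail satisfies $\sum_{k>N/d}k^{-2}\ll d/N$, replacing the inner sum by $1/\zeta(2)$ costs at most
\begin{equation*}
    J(\lambda)N^2\sum_{d\mid h}\frac{1}{d}\cdot\frac{d}{N}\ll N\,d(h)\ll_\eps N^{1+\eps},
\end{equation*}
where we use $J(\lambda)\leqslant 4$. We then drop the restriction $d\leqslant N$ on the outer sum. The omitted terms have $d>N$, so they contribute at most $N^2\cdot d(h)/N\ll_\eps N^{1+\eps}$. Combining these estimates yields the main term $\zeta(2)^{-1}J(\lambda)N^2\sum_{d\mid h}1/d$, and the claimed formula follows.

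The only delicate point is that the errors from Lemmas \ref{lem3.1} and \ref{lem3.2} are summed over $k\leqslant N/d$ and over $d\mid h$. They must not be multiplied by a factor as large as $N/d$. This is why we use the $1/k$ weight that appears after the M\"obius rescaling, together with the divisor bound. The tail estimates for $k$ and $d$ are routine.
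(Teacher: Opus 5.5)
Your proof is correct and follows essentially the same route as the paper: you insert Lemma \ref{lem3.2} into \eqref{eqn3.6}, complete the M\"obius sum using $\sum_{k\leqslant X}\mu(k)k^{-2}=\zeta(2)^{-1}+O(1/X)$, and drop the restriction $d\leqslant N$ via the divisor bound. You also make explicit a step the paper leaves implicit, namely that the $O(N(\log N)^2)$ errors from Lemma \ref{lem3.2} carry the weight $1/k$ and therefore sum to $O(N(\log N)^3 d(h))\ll_\eps N^{1+\eps}$.
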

\begin{proof}
    In view of Lemma \ref{lem3.2}, the expression \eqref{eqn3.6} becomes
    \begin{equation*}
        \cM(h,N)= J(\lambda)N^2\sum_{\substack{d|h \\ d\leqslant N}}\frac{1}{d}\sum_{k\leqslant N/d}\frac{\mu(k)}{k^2} + O_\eps(N^{1+\eps}).
    \end{equation*}
    Note that
    \begin{equation*}
       0\leqslant  \sum_{d|h}\frac{1}{d} - \sum_{\substack{d|h \\ d\leqslant N}}\frac{1}{d} \leqslant \frac{1}{N}\sum_{d\mid h}1.
    \end{equation*}
    The result now follows from the divisor bound and the classical approximation
    \begin{equation*}
        \sum_{k\leqslant X}\frac{\mu(k)}{k^2} = \frac{1}{\zeta(2)} + O\left(\frac{1}{X}\right),
    \end{equation*}
    see for example \cite[\S 3.7]{Ap1976}.
\end{proof}

\section{Difference-type solutions}\label{sec4}

We now turn our attention to solutions of the difference equation
\begin{equation}\label{eqn4.1}
    ax - by = h\qquad (a,b,x,y\in[N]).
\end{equation}
Most of the steps of our analysis of this equation are the same as in our investigation of the additive equation \eqref{eqn3.1} in the previous section. For this reason, we omit many of the details and focus on the aspects that differ between the two arguments.

Suppose $d\in[N]$ is such that $d\mid h$. Given coprime $u,v\in[N/d]$, let $b_0 \in [u]$ be the unique integer such that $b_0 v \equiv -h/d \mmod{u}$, and let $a_0 = (h/d + b_0v)/u$. We then set
\begin{equation}\label{eqn4.2}
    U :=  \max\left\{ \frac{1- a_0}{v}, \frac{1 - b_0}{u} \right\};\quad  V:=\min\left\{\frac{N- a_0}{v} ,  \frac{N- b_0}{u}\right\}.
\end{equation}
As with the additive equation, we can express the number of solutions to \eqref{eqn4.1} in the form $\cM(h,N) + \cE(h,N)$, where these terms are as given in \eqref{eqn3.3} and \eqref{eqn3.4} but with $U$ and $V$ now defined by \eqref{eqn4.2}. For the rest of this section, we focus on establishing an asymptotic formula for the main term $\cM(h,N)$. The error term will be analysed in \S\ref{sec6}.

To remove the explicit dependence of the main term on $a_0$ and $b_0$, we set 
\[ V' = V + a_0/v = \min\left\{\frac{N}{v},\frac{N}{u} - \frac{h/d}{uv}\right\} \ \ \text{and}  \ \ U' = U + a_0/v = \max\left\{\frac{1}{v}, \frac{1}{u} - \frac{h/d}{uv} \right\} \]
In contrast with the case of additive-type solutions, it is significantly easier to describe the value of $(V'-U')\1_{V'\geqslant U'}$.  Indeed, we observe that
\begin{equation*}
    V' = \frac{N}{v} \Leftrightarrow  (u-v) \leq \frac{h}{dN};\qquad 
    U' = \frac{1}{v} \Leftrightarrow  (u-v) \geq \frac{h}{d},
\end{equation*}
and so
\begin{equation}\label{eqn4.3}
    uv(V'-U')\1_{V'\geqslant U'} =
    \begin{cases}
        Nv + h/d - u, \quad&\text{if }u  - v \geqslant h/d;\\
        (N-1)v, &\text{if }h/dN \leqslant u-v < h/d;\\
        Nu - h/d - v, &\text{if } v/N \leqslant u-h/dN < v;\\
        0, &\text{otherwise.}
    \end{cases}
\end{equation}
In view of the bounds
\begin{equation*}
        \sum_{\substack{d|h \\ d \leqslant N}} \sum_{\substack{1 \leq u,v \leq N/d,\\ (u,v) = 1}}\frac{h}{duv}\1_{u-v\geqslant h/d} \ll \sum_{\substack{d|h \\ d \leqslant N}} \sum_{\substack{1 \leq u,v \leq N/d,\\ (u,v) = 1}}\frac{1}{u} \ll_\eps N^{1+\eps},
\end{equation*}
we can remove the ``$h/d-u$'' and ``$-v$'' terms from the first and third cases respectively by incurring an additional $O_\eps(N^{1+\eps})$ error term in our final count. We can similarly replace ``$(N-1)v$'' with $Nv$ in the second case. Since for any fixed choice of $v\in[N/d]$ there is at most one choice of $u\in[N/d]$ for which $0\leqslant u -h/dN<v/N$, these bounds further show that we can replace ``$v/N$'' with $0$ in the lower bound condition in the third case of \eqref{eqn4.3}.

Combining all of the observations of the previous paragraph with M\"obius inversion allows us to rewrite our main term as
\begin{equation*}
    \cM(h,N) = \sum_{\substack{d|h \\ d \leqslant N}}\sum_{k\leqslant N/d}\frac{\mu(k)}{k} \sum_{\substack{1 \leq u,v \leq N/d,\\ (u,v) = 1}}G_N(u,v,h/dk) + O_\eps(N^{1+\eps}),
\end{equation*}
where, for all $\alpha,\beta\in(0,\infty)$, we define
\begin{equation*}
    G_N(\alpha,\beta,h/dk) := \1_{\alpha -\beta \geq \frac{h}{dkN}} \left( \frac{N}{\alpha} \right) + \1_{\frac{h}{dkN}\leq \alpha < \frac{h}{dkN}+ \beta}\left( \frac{N}{\beta} - \frac{h/dk}{\alpha\beta}\right).
\end{equation*}

Using a very similar proof strategy, we can prove the following analogue of Lemma \ref{lem3.1}:
    \begin{align*}
         \sum_{1 \leq u,v \leq N/dk}  \int_{u}^{u+1}\int_{v}^{v+1} | G_N(u,v, h/dk)  \ -G_N(\alpha,\beta, h/dk)| \d \beta \d \alpha \ll N(\log N)^2.
    \end{align*}
This shows that our main term is
\begin{equation*}
    \cM(h,N) = \sum_{\substack{d|h \\ d \leqslant N}}\sum_{k\leqslant N/d}\frac{\mu(k)}{k}T_N(h/dk) + O_\eps(N^{1+\eps}),
\end{equation*}
where
\begin{align*}
    T_N(h/dk) &:= \int_1^{N/dk} \int_1^{N/dk} G_N(\alpha, \beta, h/dk)  \d \beta \d \alpha \\
    & = \frac{N^2}{d k}  \int_{dk/N}^1 \int_{dk/N}^1 \left( \1_{\alpha - \beta \geq \frac{h}{N^2}} \left( \frac{1}{\alpha} \right) + \1_{\frac{h}{N^2}\leq \alpha < \frac{h}{N^2}+ \beta}\left( \frac{1}{\beta} - \frac{h/N^2}{\alpha \beta}\right) \right)\d \alpha\d \beta\\
    &= \frac{N^2}{dk}\int_{(dk/N,1)^2}\frac{1}{\alpha\beta}\mu([h/N^2,\beta+h/N^2]\cap [0,\alpha])\d\alpha\d\beta.
\end{align*}
Using analogous calculations as in the proof of Lemma \ref{lem3.2}, we find that
\begin{equation*}
    T_N(h/dk) = \frac{N^2}{dk}K(h/N^2) + O(N(\log N)^2),
\end{equation*}
where, for all $\lambda\in\R$,
\begin{equation}\label{eqn4.4}
    K(\lambda):= \int_{(0,1)^2}\frac{1}{\alpha\beta}\mu([\lambda,\beta+\lambda]\cap [0,\alpha])\d\alpha\d\beta.
\end{equation}
Together these observations deliver the following analogue of Proposition \ref{prop3.3}.
\begin{proposition}\label{prop4.1}
    Let $N,h\in\N$ with $1\leqslant h\leqslant 2N^2$, and write $h=\lambda N^2$. Then the number of solutions $(a,b,x,y)\in[N]^4$ to \eqref{eqn4.1} is equal to
    \begin{equation*}
        \frac {K(\lambda)}{\zeta(2)}N^2\sum_{d\mid h}\frac{1}{d} + \cE(h,N)+O_\eps(N^{1+\eps}),
    \end{equation*}
    where $K(\lambda)$ is given by \eqref{eqn4.4} and  $\cE(h,N)$ is defined in \eqref{eqn3.4} with $U$ and $V$ as in \eqref{eqn4.2}.
\end{proposition}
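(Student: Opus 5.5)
The plan is to assemble Proposition \ref{prop4.1} from the reductions already carried out in this section, in exact parallel with the proof of Proposition \ref{prop3.3}.

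\emph{Step 1: exact decomposition.} For each $d\mid h$ with $d\leqslant N$ and coprime $u,v\in[N/d]$, the solutions of $au-bv=h/d$ are $b=b_0+su$, $a=a_0+sv$ with $s\in\Z$. The conditions $a,b\in[N]$ are then equivalent to $U\leqslant s\leqslant V$, with $U,V$ as in \eqref{eqn4.2}. Counting these integers exactly through the sawtooth identity gives precisely $\cM(h,N)+\cE(h,N)$, with $U,V$ from \eqref{eqn4.2}.

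\emph{Step 2: reduce the main term to $G_N$.} The shift by $a_0/v$ converts $(V-U)\1_{V\geqslant U}$ into $(V'-U')\1_{V'\geqslant U'}$. The case analysis \eqref{eqn4.3} then holds. The simplifications to $G_N$ cost $O_\eps(N^{1+\eps})$:
\begin{itemize}
\item dropping $h/d-u$ and $-v$;
\item replacing $N-1$ by $N$;
\item moving the boundary $v/N$ to $0$.
\end{itemize}
Each costs $\ll\sum_{d}\sum_{u,v\leqslant N/d}u^{-1}$ or $O(N/v)$ per $v$. I then remove coprimality by M\"obius inversion and rescale $(u,v)\mapsto(ku,kv)$, which is valid because $G_N(ku,kv,h/d)=k^{-1}G_N(u,v,h/dk)$.

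\emph{Step 3: discrete to continuous.} This is the main obstacle. I must prove the analogue of Lemma \ref{lem3.1} for $G_N$, and I would mimic that proof.
\begin{itemize}
\item First discard $u$ or $v$ within $O(1)$ of the thresholds $h/dkN$ and $\beta+h/dkN$, using $G_N\ll N/\max\{\alpha,\beta\}$. This removes the jump discontinuities at cost $O(N\log N)$.
\item On the region $\alpha-\beta\geqslant h/dkN$, the derivative of $N/\alpha$ is $\ll N/u^2$. Summing over the $\ll u$ admissible $v$ gives $\ll N\log N$.
\item On the other region, write $N/\beta-h/(dk\alpha\beta)=(N\alpha-h/dk)/(\alpha\beta)$. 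Since $0\leqslant N\alpha-h/dk<N\beta$ there, the partial derivatives are $\ll N/(uv)$, so Taylor expansion gives a total of $\ll N(\log N)^2$.
\end{itemize}
The care needed is in making sure the region boundaries (the lines $u-v=h/dkN$ and $u=h/dkN$) contribute only $O(N\log N)$ via the bound $N/\max\{u,v\}$ on lattice points near the lines.

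\emph{Step 4: extend the integrals and sum.} Changing variables gives $T_N(h/dk)=\frac{N^2}{dk}\int_{(dk/N,1)^2}\cdots$. I extend the range to $(0,1)^2$ as in Lemma \ref{lem3.2}, using
\[\mu([\lambda,\beta+\lambda]\cap[0,\alpha])\leqslant\min\{\alpha,\beta\}.\]
This gives the error strips as $O(N\log N)$ or $O(N(\log N)^2)$, so $T_N=\frac{N^2}{dk}K(\lambda)+O(N(\log N)^2)$. Summing with weight $\mu(k)/k$ over $k\leqslant N/d$ and $d\mid h$ gives total error $O_\eps(N^{1+\eps})$ by the divisor bound. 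Finally, I conclude exactly as in the proof of Proposition \ref{prop3.3}, using $\sum_{k\leqslant X}\mu(k)k^{-2}=\zeta(2)^{-1}+O(1/X)$ and completing the sum over $d\mid h$ at cost $N\,d(h)$.
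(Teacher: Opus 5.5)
Your proposal is correct and follows essentially the same route as the paper: the decomposition via \eqref{eqn4.2}, the case analysis \eqref{eqn4.3} with the same $O_\eps(N^{1+\eps})$ simplifications, M\"obius inversion to $G_N$, the analogue of Lemma \ref{lem3.1}, and the change of variables and extension of the integral to obtain $K(\lambda)$, concluding as in Proposition \ref{prop3.3}. You supply more detail than the paper for the analogue of Lemma \ref{lem3.1}, which the paper only asserts; note that $G_N(\alpha,\beta,h/dk)=\frac{N}{\alpha\beta}\,\mu\bigl([\tfrac{h}{dkN},\tfrac{h}{dkN}+\beta]\cap[0,\alpha]\bigr)$ is continuous, so the lines you discard are kinks rather than jump discontinuities, but discarding them is harmless.
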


\section{The singular series and real density}\label{sec5}

In this section, we examine the integrals \eqref{eqn3.7} and \eqref{eqn4.4} appearing in our expressions for the number of additive-type and difference-type solutions respectively and compute their exact values. We also discuss how these integrals and the term $\zeta(2)^{-1}\sum_{d\mid h}(1/d)$ can be interpreted as the real density and the singular series respectively for their associated counting problems.

\subsection{The singular series}

We begin with the singular series. For each prime $p$ and $k\in\N$, let $N_{p,k}(h)$ denote the number of solutions to $x_1x_2 + x_3 x_4 \equiv h \mmod{p^k}$, where $x_1, \ldots, x_4 \in \Z/p^k \Z$. For each $n\in\N$, we also let $\nu_p(n)$ denote the largest non-negative integer $r$ such that $p^r$ divides $n$. With this in hand, we prove the following lemma.

\begin{lemma} \label{lem5.1}
    Let $p$ be prime. For any $m\in\Z\setminus\{0\}$ and any integer $k > 10 \nu_p(m)$, one has 
\begin{equation} \label{eqn5.1}
N_{p,k}(m) = p^{3k} \left(1 - \frac{1}{p^2}\right)\left(1 + \frac{1}{p} + \cdots + \frac{1}{p^{\nu_p(m)}}\right). 
\end{equation}
In particular,
\begin{equation*}
    \sigma_p(m):= \lim_{k\to\infty}p^{-3k}N_{p,k}(m) = \left(1 - \frac{1}{p^2}\right)\left(1 + \frac{1}{p} + \cdots + \frac{1}{p^{\nu_p(m)}}\right). 
\end{equation*}
\end{lemma}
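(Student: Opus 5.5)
I would compute $N_{p,k}(m)$ exactly using additive characters modulo $p^k$, which reduces it to a sum of Ramanujan sums. The main-term integrals of \S\S\ref{sec3}--\ref{sec4} are not needed here.

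\textbf{Step 1: detect the congruence.} By orthogonality,
\[ N_{p,k}(m) = p^{-k}\sum_{t \bmod p^k} e(-tm/p^k)\Big(\sum_{x,y \bmod p^k} e(txy/p^k)\Big)^2 . \]
In the inner sum, summing over $y$ first gives $p^k$ times the number of $x$ with $p^k \mid tx$. That number is $(t,p^k)$, so the inner sum equals $p^k (t,p^k)$. Hence
\[ N_{p,k}(m) = p^{k}\sum_{t \bmod p^k} (t,p^k)^2\, e(-tm/p^k). \]

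\textbf{Step 2: group by the $p$-part of $t$.} Write $t = p^{j}s$ with $0\le j\le k$ and $s$ running over $(\Z/p^{k-j}\Z)^\times$; the case $j=k$ is $t=0$. Then $(t,p^k)^2=p^{2j}$ and the sum over $s$ is the Ramanujan sum $c_{p^{k-j}}(m)$. Substituting $r=k-j$ gives
\[ N_{p,k}(m) = p^{3k}\sum_{r=0}^{k} p^{-2r} c_{p^r}(m). \]

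\textbf{Step 3: evaluate the Ramanujan sums.} Write $v=\nu_p(m)$. The standard values, which are also the divisor-sum formula in \eqref{eqn2.2}, are:
\begin{itemize}
\item $c_{1}(m)=1$;
\item $c_{p^r}(m)=p^r-p^{r-1}$ for $1\le r\le v$;
\item $c_{p^{v+1}}(m)=-p^{v}$;
\item $c_{p^r}(m)=0$ for $r\ge v+2$.
\end{itemize}
The hypothesis $k>10v$ guarantees $k\ge v+1$, since $k\ge 1$ when $v=0$ and $10v\ge v+1$ otherwise. So every nonzero term lies in the range $0\le r\le k$, and the sum does not depend on $k$. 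It equals
\[ 1+\sum_{r=1}^{v}\big(p^{-r}-p^{-r-1}\big) - p^{-v-2}. \]

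\textbf{Step 4: match the closed form.} It remains to check that this telescoping expression equals $\sum_{r=0}^{v}p^{-r}-\sum_{r=2}^{v+2}p^{-r}$, which is $(1-p^{-2})(1+p^{-1}+\dots+p^{-v})$. Both sides become $1+p^{-1}-p^{-v-1}-p^{-v-2}$ after cancellation, which proves \eqref{eqn5.1}.

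Since $p^{-3k}N_{p,k}(m)$ is constant for all $k>10v$, the limit defining $\sigma_p(m)$ exists and equals this value. The only delicate points are keeping the sign conventions straight and treating the edge cases $v=0$ and $j=k$ correctly. I do not expect a genuine obstacle.
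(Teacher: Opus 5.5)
Your proof is correct, but it takes a different route from the paper.

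\textbf{The paper's route.} The argument is purely combinatorial. It sorts solutions by $\eta=\min\{\nu_p(x_1),\nu_p(x_3)\}$ and notes that $\eta\le\nu_p(m)$. It counts $p^{2k-2i}(1-p^{-2})$ pairs $(x_1,x_3)$ with $\eta=i$. For each such pair and each $x_4$, the congruence determines $x_2$ modulo $p^{k-i}$, which gives $p^i$ lifts. So level $i$ contributes $p^{3k-i}(1-p^{-2})$, and summing over $0\le i\le\nu_p(m)$ gives \eqref{eqn5.1}.

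\textbf{Your route.} You detect the congruence with additive characters and reduce to the local Ramanujan-sum expansion $p^{-3k}N_{p,k}(m)=\sum_{r\le k}p^{-2r}c_{p^r}(m)$.

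\textbf{Comparison.} Your argument is more mechanical. It gives an identity valid for every $k$. It also exhibits $\sigma_p(m)$ as the $p$-th Euler factor of $\sum_q c_q(m)q^{-2}=\zeta(2)^{-1}\sum_{d\mid m}d^{-1}$, which is exactly the singular-series shape the circle-method heuristic in the introduction predicts. The paper's argument avoids exponential sums altogether. In it, the factor $1+p^{-1}+\dots+p^{-\nu_p(m)}$ appears directly as a sum over the common power of $p$ dividing $x_1$ and $x_3$. This mirrors the decomposition by $d=(x,y)$ with $d\mid h$ used in \S3. Both arguments only need $k\ge\nu_p(m)+1$, so the hypothesis $k>10\nu_p(m)$ is stronger than necessary.
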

\begin{proof}
    Given $x_1, x_2,x_3, x_4 \in \Z/p^k \Z$ satisfying $x_1 x_2 + x_3 x_4 \equiv m \mmod{p^k}$, we define $\eta = \min\{ \nu_p(x_1), \nu_p(x_3)\}$. Note that $0 \leq \eta \leq \nu_p(m)$. For any $0 \leq i \leq \nu_p(m)$, the number of pairs $(x_1, x_3)$ satisfying $\eta =i$ is precisely 
\[ p^{2k-2i} ( 2( 1- 1/p) - (1- 1/p)^2 ) = p^{2k - 2i}(1 - 1/p^2 ). \] Given any such pair $(x_1, x_3)$, say, with $\nu_p(x_1) = i$, we see that fixing $x_4$ then fixes $x_2 \mmod{p^{k-i}}$. Moreover each such fixed value of $x_2 \mmod{p^{k-i}}$ lifts to precisely $p^i$ distinct $x_2 \in \Z/p^k \Z$ satisfying $x_1 x_2 + x_3 x_4 \equiv m \mmod{p^k}$. Thus, the number of solutions $x_1,\dots, x_4$ with $\eta =i$ is $p^{3k-i}(1-1/p^2)$. Summing this over all $0 \leq i \leq \nu_p(m)$ delivers the expression in \eqref{eqn5.1}.
\end{proof}

Combining Lemma \ref{lem5.1} with the facts that 
\begin{equation}\label{eqn5.2}
    \prod_p\left( 1-\frac{1}{p^2}\right) = \frac{1}{\zeta(2)} \qquad \text{and} \qquad \prod_p \left(\sum_{j=0}^{\nu_p(h)}\frac{1}{p^j}\right) = \sum_{d\mid h}\frac{1}{d}
\end{equation}
immediately delivers the formula for the singular series in Proposition \ref{prop1.5}.

\subsection{Real density of additive-type solutions}

We now turn to the real density for \eqref{eqn3.1}. For each $\lambda\in\R$ and $0<\eta\leqslant 1$, let
\begin{equation*}
    J_{\eta}(\lambda) := \frac{1}{2\eta}\int_{(0,1)^4}\1_{|x_1x_4 + x_2x_3 - \lambda| < \eta} \d \mathbf{x}; \quad 
    \widetilde{J}_\eta(\lambda):=\int_{(\eta,1)^2} \frac{1}{\alpha \beta} \ \mu( (0, \beta] \cap (\lambda - (0, \alpha]  ) )\d \alpha \d \beta.
\end{equation*}
Our goal is to show that $J_\eta(\lambda)\to J(\lambda)$ as $\eta\to 0^+$. We accomplish this by first approximating $J_\eta(\lambda)$ by $\widetilde{J}_\eta(\lambda)$, and then approximating $J(\lambda)$ by $\widetilde{J}_\eta(\lambda)$. This latter approximation will also subsequently aid in our proof of Corollary \ref{cor1.2}. We first require the following lemma to handle error terms which arise in these calculations.

\begin{lemma}\label{lem5.2}
For all $0\leqslant \lambda\leqslant 2$ and $0<\eta\leqslant 1$, we have
    \begin{equation*}
       I(\lambda;\eta):= \int_{(0,1]^2} \1_{ \lambda - 2 \eta < xy < \lambda + 2\eta }\d x \d y = O(\eta\log(2/\eta)).
    \end{equation*}
\end{lemma}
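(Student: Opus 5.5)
Fix $x\in(0,1]$ and integrate over $y$ first. For fixed $x$, the condition $\lambda-2\eta<xy<\lambda+2\eta$ says that $y$ lies in an interval of length $4\eta/x$. Intersecting this with $(0,1]$ gives a set of measure at most $\min\{1,4\eta/x\}$. The bound $4\eta/x$ is too weak on its own near $x=0$, so the plan is to split the range of $x$ at $x=\eta$. On $(0,\eta]$ we use the trivial bound $1$ for the inner integral. On $(\eta,1]$ we use $4\eta/x$.

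This yields
\[ I(\lambda;\eta)\leqslant \int_0^{\eta}1\,\d x+\int_{\eta}^1\frac{4\eta}{x}\,\d x = \eta+4\eta\log(1/\eta). \]
Since $\eta\leqslant 1$, we have $\log(2/\eta)\geqslant \log 2>0$ and $\log(1/\eta)\leqslant\log(2/\eta)$. Hence $\eta+4\eta\log(1/\eta)\ll \eta\log(2/\eta)$, which is the claimed bound. The estimate is uniform in $\lambda$, so the assumption $0\leqslant\lambda\leqslant2$ is not actually used. The same computation works whether the interval of $y$-values is entirely inside $(0,1]$ or only partially so. It also works when $\lambda-2\eta<0$, since the measure bound only uses the length of the interval.

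There is no genuine obstacle here. The only care needed is to avoid the non-integrable singularity of $4\eta/x$ at $x=0$. That is why we cut at $x=\eta$, and this cut is exactly what produces the logarithmic factor. The resulting bound is essentially sharp: for $\lambda$ near $0$ the region $\{xy<2\eta\}$ has area $\asymp\eta\log(1/\eta)$, so one cannot expect better than $O(\eta\log(2/\eta))$ uniformly in $\lambda$.
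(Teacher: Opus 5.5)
Your proof is correct, but it follows a different and shorter route than the paper's.

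The paper argues by cases on $\lambda$:
\begin{itemize}
\item it discards the case $\lambda-2\eta\geqslant 1$;
\item when $\lambda+2\eta\geqslant 1$, it bounds the region by the square $(\lambda-2\eta,1]^2$, of area $O(\eta^2)$;
\item when $\lambda\leqslant 6\eta$, it uses the crude bound $\int_{(0,1]^2}\1_{xy<\lambda+2\eta}\d x\d y=(\lambda+2\eta)(1-\log(\lambda+2\eta))$;
\item when $\lambda>6\eta$, it computes the area of the hyperbolic strip exactly. It then needs the expansion $\log(1+x)=x-O(x^2)$ to show that the term $-(\lambda-2\eta)\log(1+4\eta/(\lambda-2\eta))$ cancels the $4\eta$ up to $O(\eta)$.
\end{itemize}
Your fibrewise bound $\min\{1,4\eta/x\}$, with the cut at $x=\eta$, replaces all of this with a two-line estimate. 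It is uniform over every real $\lambda$, so both the hypothesis $0\leqslant\lambda\leqslant 2$ and the case distinctions disappear.

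The paper's exact computation gives finer dependence on $\lambda$. For example, it yields $O(\eta^2)$ near $\lambda=1$, and $4\eta\log(1/(\lambda+2\eta))+O(\eta)$ once $\lambda>6\eta$. However, Lemma~\ref{lem5.3} only uses the uniform bound $O(\eta\log(2/\eta))$, which is exactly what you prove. If the refinement were ever needed, your slicing recovers it too: the fibre is empty when $x\leqslant\lambda-2\eta$, so the $x$-integral can start at $\max\{0,\lambda-2\eta\}$.
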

\begin{proof}
    We may assume throughout that $\lambda-2\eta<1$, as otherwise $I(\lambda;\eta)=0$ and we are done.
    If we additionally have $\lambda + 2\eta\geqslant 1$, then $|1-(\lambda +2\eta)|\leqslant 4\eta$ and so
    \begin{equation*}
        I(\lambda;\eta) = \int_{(0,1]^2} \1_{xy > \lambda - 2\eta }\d x \d y \leqslant \int_{(\lambda - 2\eta,1]^2} \d x\d y \ll \eta^2.
    \end{equation*}

    Henceforth, we assume $\lambda + 2\eta<1$. First consider the case where $\lambda\leqslant 6\eta$. The desired estimate for $I(\lambda;\eta)$ now follows from the crude bound
    \begin{align*}
        I(\lambda;\eta)\leqslant \int_{(0,1]^2} \1_{xy < \lambda + 2\eta }\d x \d y = \int_0^1\int_0^{\min\{1,(\lambda+2\eta)/y\}}\d x \d y &= (\lambda + 2\eta)(1-\log (\lambda + 2\eta))\\
        & \leqslant 8\eta(1+\log(2/\eta)).
    \end{align*}
    Now suppose instead that $\lambda>6\eta$. By considering when the integrand of $I(\lambda;\eta)$ is non-zero, we find that
    \begin{align*}
        I(\lambda;\eta) &= \int_{\lambda - 2\eta}^1\int_{(\lambda -2\eta)/y}^{\min\{1,(\lambda + 2\eta)/y\}} \d x \d y = \int_{\lambda + 2\eta}^1 \frac{4\eta}{y}\d y + \int_{\lambda -2\eta}^{\lambda + 2\eta}\left(1 - \frac{\lambda - 2\eta}{y}\right)\d y\\
        & = 4\eta \log\left(\frac{1}{\lambda + 2\eta}\right) + 4\eta - (\lambda- 2\eta)\log\left(1+\frac{4\eta}{\lambda - 2\eta}\right)\\
        &= O(\eta\log(2/\eta)) -(\lambda- 2\eta)\log\left(1+\frac{4\eta}{\lambda - 2\eta}\right). 
    \end{align*}
    Invoking the Taylor expansion $\log(1+x) = x -O(x^2)$ for $0<x<1$ now delivers the final required bound
    \begin{equation*}
        (\lambda- 2\eta)\log\left(1+\frac{4\eta}{\lambda - 2\eta}\right) = 4\eta - O\left( \frac{\eta^2}{\lambda - 2\eta}\right) = O(\eta).\qedhere
    \end{equation*}
\end{proof}

\begin{lemma} \label{lem5.3}
    For all $\lambda\in\R$ and $0<\eta\leqslant 1$, we have
    \begin{equation*}
        J_\eta(\lambda) = \widetilde{J}_\eta(\lambda) + O(\eta(\log(2/\eta))^2)= J(\lambda) + O(\eta(\log(2/\eta))^2),
    \end{equation*}
    where $J(\lambda)$ is defined by \eqref{eqn3.7}.
    In particular, for all $\lambda\in\R$, we have $J_\eta(\lambda)\to J(\lambda)$ as $\eta\to 0^+$.
    \end{lemma}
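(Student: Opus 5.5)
Our plan is to compute $J_\eta(\lambda)$ by integrating out one variable in each product, and then to compare the result with $\widetilde{J}_\eta(\lambda)$. We first dispose of trivial ranges: if $\lambda \notin (-2\eta, 2+2\eta)$, both sides vanish or are $O(\eta)$. So we assume $-2\eta<\lambda<2+2\eta$; by monotonicity in $\lambda$ and Lemma \ref{lem5.2}, values of $\lambda$ slightly outside $[0,2]$ are handled the same way.

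\emph{Step 1: change of variables.} Write $s = x_1x_4$ and $t = x_2x_3$. For fixed $x_1 = \alpha$ and $x_2=\beta$, substitute $s=\alpha x_4 \in (0,\alpha)$ and $t=\beta x_3\in(0,\beta)$, which gives
\[ J_\eta(\lambda) = \frac{1}{2\eta}\int_{(0,1)^2}\frac{1}{\alpha\beta}\int_0^{\alpha}\int_0^{\beta}\1_{|s+t-\lambda|<\eta}\d t\d s\d\alpha\d\beta. \]
The inner double integral, divided by $2\eta$, equals $\frac{1}{2\eta}\int_{\lambda-\eta}^{\lambda+\eta}\mu((0,\beta]\cap(c-(0,\alpha]))\d c$, an average of $f_{\alpha,\beta}(c):=\mu((0,\beta]\cap(c-(0,\alpha]))$ over $c$ in a window of width $2\eta$ around $\lambda$.

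\emph{Step 2: restrict to $\alpha,\beta>\eta$ and compare.} The function $c\mapsto f_{\alpha,\beta}(c)$ is $1$-Lipschitz, so on $(\eta,1)^2$ the average differs from $f_{\alpha,\beta}(\lambda)$ by at most $\eta$. This crude bound is not enough on its own, since $\int_{(\eta,1)^2}\frac{\eta}{\alpha\beta}$ is of size $\eta\log^2(1/\eta)$ — which is in fact exactly the stated error. So Lipschitz continuity suffices on $(\eta,1)^2$, giving a contribution of $\widetilde J_\eta(\lambda)+O(\eta\log^2(2/\eta))$.

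\emph{Step 3: the region $\min(\alpha,\beta)\le\eta$.} This is the main obstacle. By symmetry, suppose $\alpha\le\eta$. Returning to the original variables, the contribution is
\[ \frac{1}{2\eta}\int \1_{x_1\le\eta}\1_{|x_1x_4+x_2x_3-\lambda|<\eta}\d\mathbf x. \]
On this region $x_1x_4\le\eta$, so the constraint forces $|x_2x_3-\lambda|<2\eta$. Hence the contribution is at most $\frac{1}{2\eta}\cdot\eta\cdot I(\lambda;\eta)=O(\eta\log(2/\eta))$ by Lemma \ref{lem5.2}. This proves the first equality.

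\emph{Step 4: comparing $\widetilde J_\eta(\lambda)$ with $J(\lambda)$.} We bound the part of the integral over $\{\alpha\le\eta\}\cup\{\beta\le\eta\}$ in the definition of $J$. The crude bound \eqref{eqn3.8} gives $\int_0^\eta\int_\eta^1 \frac{\min\{\alpha,\beta\}}{\alpha\beta}\ll \eta\log(1/\eta)$, which controls the region where $\beta$ is not small. The remaining piece, $\int_{(0,\eta)^2}\frac{\min\{\alpha,\beta\}}{\alpha\beta}$, is $O(\eta)$ after computing in polar-type splitting $\alpha<\beta$. Both pieces are within the stated error. Letting $\eta\to0^+$ then gives the convergence $J_\eta(\lambda)\to J(\lambda)$.
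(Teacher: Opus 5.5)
Your proof is correct and follows the paper's argument essentially step for step. Like the paper, you rewrite $J_\eta(\lambda)$ as an average of $\mu((0,\beta]\cap(c-(0,\alpha]))$ over a window of width $2\eta$, discard the region $\min\{x_1,x_2\}\leqslant\eta$ via Lemma \ref{lem5.2}, use the $1$-Lipschitz bound \eqref{eqn5.3} on $(\eta,1)^2$, and extend $\widetilde{J}_\eta$ to $J$ with the crude bound \eqref{eqn3.8}. Your monotonicity remark for $\lambda$ just outside $[0,2]$ is slightly more careful than the paper, which asserts that $J_\eta(\lambda)$ vanishes there; that is not quite true for $\lambda\in(-\eta,0)\cup(2,2+\eta)$, although the stated bound still holds.
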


    \begin{proof}
    We may assume that $0\leqslant\lambda\leqslant 2$, as otherwise $J_\eta(\lambda)=\widetilde J_\eta(\lambda) = J(\lambda)=0$ and we are done. 
    We begin by observing that
\begin{align*}
  J_{\eta}(\lambda) = &\; (2\eta)^{-1} \int_{(\eta,1)^2} \frac{1}{ab} \int_{0}^a\int_0^b \1_{\lambda - \eta < u + v < \lambda + \eta}\d u\d v\d a \d b  \\
  &+ O\left( \eta^{-1} \int_{0}^{\eta} \int_{[0,1]^2} \1_{ \lambda - 2\eta < by < \lambda + 2 \eta}\d b\d y\d a \d x\right).
\end{align*}
Performing a change of variables and invoking Lemma \ref{lem5.2} to bound the error term delivers
\begin{equation*}
    J_\eta(\lambda)= (2\eta)^{-1}\int_{(\eta,1)^2} \frac{1}{ab} \int_{\lambda-\eta}^{\lambda + \eta}  \mu( [0,b] \cap ( z- [0,a]) )\d z \d a \d b+ O(\eta\log(2/\eta)).
\end{equation*}
Using the crude estimate
\begin{equation}\label{eqn5.3}
    |\mu( [0,b] \cap ( z- [0,a]) )-\mu( [0,b] \cap ( z'- [0,a]) )|\leqslant|z-z'|,
\end{equation}
we deduce
\begin{align*}
    J_\eta(\lambda)+O(\eta\log(2/\eta)) &= \int_{(\eta,1)^2} \frac{1}{\alpha\beta} \mu( [0,\beta] \cap ( \lambda- [0,\alpha]) )\d\alpha \d\beta + O\left(\int_{(\eta,1)^2} \frac{\eta}{\alpha\beta}\d\alpha \d\beta\right)\\
    &= \widetilde{J}_\eta(\lambda) + O(\eta(\log(2/\eta))^2).
\end{align*}

It only remains to extend the range of integration from $(\eta,1]^2$ to $(0,1]^2$. Using \eqref{eqn3.8}, we obtain the following bounds:
\begin{align*}
    \int_{(0,\eta]^2} \frac{1}{\alpha\beta} \mu( [0,\beta] \cap ( \lambda- [0,\alpha]) )\d\alpha \d\beta \leqslant \int_{(0,\eta]^2} \frac{1}{\sqrt{\alpha\beta}} \d\alpha\d\beta \ll \eta,\\
    \int_{(0,\eta]}\int_{(\eta,1]}\frac{1}{\alpha\beta} \mu( [0,\beta] \cap ( \lambda- [0,\alpha]) )\d\alpha \d\beta\leqslant \eta\int_{(\eta,1]}\frac{1}{\alpha}\d\alpha  \ll \eta\log(2/\eta),\\
    \int_{(\eta,1]}\int_{(0,\eta]}\frac{1}{\alpha\beta} \mu( [0,\beta] \cap ( \lambda- [0,\alpha]) )\d\alpha \d\beta\leqslant \eta\int_{(\eta,1]}\frac{1}{\beta}\d\beta  \ll \eta\log(2/\eta).
\end{align*}
Combining these with the definition of $\widetilde{J}_\eta(\lambda)$ establishes the desired estimate.
    \end{proof}

Our final task is to provide an explicit formula for the real density $J(\lambda)$. 
The most technical aspect of this calculation is the appearance of the dilogarithm function
\begin{equation} \label{eqn5.4}
    \Li_2(x):=-\int_0^x\frac{\log(1-t)}{t}\d t = \sum_{n=1}^\infty\frac{x^n}{n^2}\qquad (x\in[-1,1]).
\end{equation}
For our purposes, we only consider the dilogarithm restricted to $[0,1]$ and we record all the properties we need in the following lemma.
\begin{lemma}\label{lem5.4}
    The dilogarithm function defined above is continuous on $[0,1]$ and has the special values
    \begin{equation*}
        \Li_2(0) = 0;\quad \Li_2(1/2) = \frac{1}{2}(\zeta(2) - (\log(2))^2);\quad \Li_2(1) = \zeta(2).
    \end{equation*}
    Moreover, we have the \emph{reflection formula}\footnote{Here we interpret $\log(0)\log(1) = 0$.}
    \begin{equation*}
        \Li_2(x) + \Li_2(1-x) = \zeta(2) - \log(x)\log(1-x).
    \end{equation*}
\end{lemma}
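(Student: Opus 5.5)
The plan is to work throughout with the power series definition $\Li_2(x)=\sum_{n\ge1}x^n/n^2$ on $[0,1]$, after first checking that it agrees with the integral in \eqref{eqn5.4}. For $0\le t<1$ we have $-\log(1-t)/t=\sum_{n\ge1}t^{n-1}/n$, and every term is nonnegative. So for $x\in[0,1]$ (including the improper integral at $x=1$), monotone convergence lets us integrate term by term, giving $-\int_0^x\log(1-t)t^{-1}\d t=\sum_{n\ge1}x^n/n^2$. Continuity on $[0,1]$ then follows from the Weierstrass M-test, since $|x^n/n^2|\le 1/n^2$ and $\sum 1/n^2<\infty$, so the series converges uniformly on $[0,1]$. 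The special values $\Li_2(0)=0$ and $\Li_2(1)=\sum n^{-2}=\zeta(2)$ are immediate from the series.

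For the reflection formula I will set, for $x\in(0,1)$,
\[
f(x)=\Li_2(x)+\Li_2(1-x)+\log(x)\log(1-x),
\]
and differentiate. By the fundamental theorem of calculus $\Li_2'(x)=-\log(1-x)/x$, and so $\tfrac{d}{dx}\Li_2(1-x)=\log(x)/(1-x)$. Also
\[
\tfrac{d}{dx}\big(\log x\log(1-x)\big)=\frac{\log(1-x)}{x}-\frac{\log x}{1-x}.
\]
These three derivatives sum to zero, so $f$ is constant on $(0,1)$.

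To identify the constant I let $x\to0^+$. By continuity, $\Li_2(x)+\Li_2(1-x)\to\Li_2(0)+\Li_2(1)=\zeta(2)$. For the product term, $\log(1-x)=-x+O(x^2)$, so $\log(x)\log(1-x)=O(x|\log x|)\to0$. Hence $f\equiv\zeta(2)$ on $(0,1)$. At the endpoints $x\in\{0,1\}$ the formula reads $\Li_2(0)+\Li_2(1)=\zeta(2)$, which holds under the stated convention $\log(0)\log(1)=0$. Finally, putting $x=1/2$ gives $2\Li_2(1/2)=\zeta(2)-(\log 2)^2$, which is the remaining special value.

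The only step needing care is the boundary behaviour. The integral is improper at $t=1$ when $x=1$, and the constant in the reflection formula has to be evaluated as a limit. Monotone convergence handles the first point, and the elementary estimate $x\log x\to0$ handles the second, so I do not expect a genuine obstacle.
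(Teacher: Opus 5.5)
Your proof is correct and is essentially the argument the paper has in mind: the paper only remarks that everything follows from the definition and the Taylor expansion of the logarithm and cites a reference. Your term-by-term integration via monotone convergence, the M-test for continuity, the derivative computation showing $\Li_2(x)+\Li_2(1-x)+\log x\log(1-x)$ is constant on $(0,1)$, the limit $x\to0^+$ (using continuity of $\Li_2$ at $1$ and $x\log x\to0$), and the deduction of $\Li_2(1/2)$ by setting $x=1/2$ supply exactly those details soundly.
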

\begin{proof}
    All of these properties follow readily from the definition of the dilogarithm and the Taylor expansion of the logarithm; see \cite{dilog} for further details.
\end{proof}

\begin{lemma}\label{lem5.5}
    For each $\lambda\in\R$, let $J(\lambda)$ be defined as in \eqref{eqn3.7}. Then we have
    \begin{equation*}
        J(\lambda)=
        \begin{cases}
        \lambda(\log^2(\lambda) - 2\log(\lambda) + 2-\Li_2(\lambda)), \quad &\text{if }0<\lambda<1;\\
        2-\zeta(2), &\text{if }\lambda = 1;\\
        2 + 2(\lambda-1)(\log(\lambda-1) - 1)+\lambda(\zeta(2)-\log^2(\lambda) - 2\Li_2(1/\lambda)), &\text{if }1<\lambda<2; \\
            0, &\text{otherwise}.
        \end{cases}
    \end{equation*}
\end{lemma}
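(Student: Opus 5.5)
We evaluate $J(\lambda)$ directly from \eqref{eqn3.7}, doing the inner measure explicitly and then integrating in elementary pieces. For $\alpha,\beta\in(0,1)$ the measure is
\[
m(\alpha,\beta):=\mu\bigl((0,\beta]\cap(\lambda-(0,\alpha])\bigr)=\max\{0,\min\{\beta,\lambda\}-\max\{0,\lambda-\alpha\}\}.
\]
Outside $(0,2)$ it vanishes identically, which gives the last case. The plan is to integrate in $\beta$ first with $\alpha$ fixed, because then the integrand $m/\beta$ is piecewise of the form $c$ or $c/\beta$. We then integrate the result against $d\alpha/\alpha$, and the remaining integrals reduce to $\int\log(1-t)/t$, that is, to $\Li_2$. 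The same computation already appeared, for small $\alpha$, in the proof of Lemma~\ref{lem3.2}.

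\emph{Case $0<\lambda<1$.} Split according to whether $\alpha\le\lambda$ or $\alpha>\lambda$.
\begin{itemize}
\item If $\alpha\le\lambda$, then $m=\max\{0,\min\{\beta,\lambda\}-(\lambda-\alpha)\}$. The inner integral is
\[
\int_{\lambda-\alpha}^{\lambda}\frac{\beta-\lambda+\alpha}{\beta}\,d\beta+\int_\lambda^1\frac{\alpha}{\beta}\,d\beta
=\alpha+(\lambda-\alpha)\log\Bigl(1-\frac{\alpha}{\lambda}\Bigr)-\alpha\log\lambda .
\]
Dividing by $\alpha$ and integrating over $(0,\lambda)$, substituting $t=\alpha/\lambda$, gives
\[
\lambda(1-\log\lambda)+\lambda\int_0^1\bigl(t^{-1}-1\bigr)\log(1-t)\,dt .
\]
\item If $\alpha>\lambda$, then $m=\min\{\beta,\lambda\}$. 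The inner integral is $\lambda+\lambda\log(1/\lambda)$, and integrating against $d\alpha/\alpha$ over $(\lambda,1)$ gives $\lambda(1-\log\lambda)(-\log\lambda)$.
\end{itemize}
We have $\int_0^1 t^{-1}\log(1-t)\,dt=-\zeta(2)$ and $\int_0^1\log(1-t)\,dt=-1$.

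At this point the naive split produces a constant $\zeta(2)$ rather than $\Li_2(\lambda)$. So I must double-check the region where $\beta<\lambda-\alpha$: there $m=0$ only when $\lambda-\alpha>0$, and $\beta$ ranges over $(0,1)$, not over $(0,\lambda)$. The bookkeeping needs care, and $\Li_2(\lambda)$ should arise from a term like $\int_\lambda^1 \log(1-\lambda/\alpha)\,d\alpha/\alpha$, after substituting $t=\lambda/\alpha$. I will therefore organise the computation by integrating in $\alpha$ first for fixed $\beta$, symmetrically. I expect the $\Li_2(\lambda)$ term to come from the range $\alpha>\lambda$ when we instead treat $\lambda-\alpha<0$ interacting with $\beta$, and I will simply recompute carefully, region by region in the $(\alpha,\beta)$ square cut by the lines $\alpha=\lambda$, $\beta=\lambda$ and $\alpha+\beta=\lambda$.

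\emph{Case $1<\lambda<2$.} The relevant region is $\alpha+\beta>\lambda$, where $m=\alpha+\beta-\lambda$ when $\alpha,\beta\le1$, since $\min\{\beta,\lambda\}=\beta$ and $\lambda-\alpha>0$. Hence
\[
J(\lambda)=\iint_{\alpha+\beta>\lambda,\;\alpha,\beta<1}\Bigl(\frac{1}{\beta}+\frac{1}{\alpha}-\frac{\lambda}{\alpha\beta}\Bigr)\,d\alpha\,d\beta .
\]
\begin{itemize}
\item The first two terms give $2\int_{\lambda-1}^1\bigl(1-(\lambda-\alpha)\bigr)\,d\alpha/\alpha$, which is elementary and produces the $(\lambda-1)\log(\lambda-1)$ terms.
\item The last term is $-\lambda\int_{\lambda-1}^1\log\bigl(1/(\lambda-\alpha)\bigr)\,d\alpha/\alpha$. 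Substituting $\alpha=\lambda s$ turns it into $\Li_2$ values at $1/\lambda$ and $1-1/\lambda$.
\end{itemize}
Using the reflection formula from Lemma~\ref{lem5.4} should then collapse these to $\zeta(2)-\log^2\lambda-2\Li_2(1/\lambda)$.

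\emph{Case $\lambda=1$.} This follows either by the same computation or by continuity, using $\Li_2(1)=\zeta(2)$ and the continuity of $J$, which is clear from \eqref{eqn5.3} and dominated convergence.

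The main obstacle is the region bookkeeping in the case $0<\lambda<1$, together with matching the resulting $\Li_2$ expressions to the stated closed form via the reflection identity.
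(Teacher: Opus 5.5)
\textbf{The gap is in the statement, not in your computation.} Your computation for $0<\lambda<1$ is correct, and the discrepancy you hit is not a bookkeeping error on your side: the first case of the statement is false. Your split is exactly the paper's. Your $\alpha>\lambda$ piece is its $I_1$, and your $\alpha\leqslant\lambda$ piece is $I_2+I_3$. Your values add up to
\[
J(\lambda)=\lambda\bigl(\log^2\lambda-2\log\lambda+2-\zeta(2)\bigr)\qquad(0<\lambda<1).
\]

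The paper reaches $\Li_2(\lambda)$ only through a slip in evaluating $I_3$. Under $t=\alpha/\lambda$, the range $\alpha\in(0,\lambda)$ becomes $t\in(0,1)$, so
\[
I_3=\lambda+\lambda\int_0^1 t^{-1}(1-t)\log(1-t)\d t=\lambda(2-\zeta(2)).
\]
The paper writes the upper limit as $\lambda$, and even with that limit the value $\lambda(2-\Li_2(\lambda))$ would not follow. The proof of Lemma \ref{lem3.2}, with $\ell=\lambda$, contains precisely the expression you found.

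An independent check: $J(\lambda)$ is the density at $\lambda$ of $x_1x_4+x_2x_3$ on $(0,1)^4$. A product of two independent uniform variables has density $-\log p$ on $(0,1)$, so for $0<\lambda\leqslant 1$
\[
J(\lambda)=\int_0^\lambda\log p\,\log(\lambda-p)\d p=\lambda\Bigl(\log^2\lambda-2\log\lambda+\int_0^1\log s\,\log(1-s)\d s\Bigr),
\]
with $\int_0^1\log s\log(1-s)\d s=\sum_{n\geqslant1}\frac{1}{n(n+1)^2}=2-\zeta(2)$. Numerically, $J(1/2)\approx 1.111$, whereas the stated formula gives about $1.642$.

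\textbf{Why your deferred step would fail.} The step you put off (recompute region by region until $\Li_2(\lambda)$ appears) cannot succeed. Your guess about where $\Li_2(\lambda)$ should come from also has no basis. For $\alpha>\lambda$ the measure is $\min\{\beta,\lambda\}$, which does not depend on $\alpha$, so no term like $\int_\lambda^1\log(1-\lambda/\alpha)\,\d\alpha/\alpha$ arises. Integrating in $\alpha$ first just reproduces your computation, because the integrand is symmetric in $\alpha$ and $\beta$. The correct course was to trust the computation and correct the statement.

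\textbf{The other cases are fine.} They follow the paper's route: inner integral in $\beta$, substitution $\alpha=\lambda s$, then the reflection formula. Your splitting into $\frac{1}{\beta}+\frac{1}{\alpha}-\frac{\lambda}{\alpha\beta}$ for $1<\lambda<2$ is only a cosmetic variant. Carried out, the first two terms give $2+2(\lambda-1)(\log(\lambda-1)-1)$, and the last term gives $\lambda(\zeta(2)-\log^2\lambda-2\Li_2(1/\lambda))$, matching the statement.

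Your continuity argument at $\lambda=1$ is valid: pointwise convergence comes from \eqref{eqn5.3}, and domination by $(\alpha\beta)^{-1/2}$ comes from \eqref{eqn3.8}. The value $J(1)=2-\zeta(2)$ is correct. It is the common limit of the corrected formula from the left and of the $1<\lambda<2$ formula from the right; the erroneous formula has the same limit only because $\Li_2(1)=\zeta(2)$.

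In particular, Theorem \ref{thm1.3}, which only uses $J(1)$, is unaffected. The $\lambda\in(0,1)$ formula for $\sigma_\infty$ in Proposition \ref{prop1.5}, however, is built on the incorrect case.
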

\begin{proof}
    As noted in the remarks following \eqref{eqn3.7}, it is immediate from the definition that $J(\lambda) = 0$ whenever $\lambda\leqslant 0$ or $\lambda\geqslant 2$. We therefore only consider $0<\lambda<2$.

    First, consider the case where $0<\lambda< 1$. We have
    \begin{equation*}
        J(\lambda) = \int_{(0,1)^2}\frac{1}{\alpha\beta}\mu((\max\{0,\lambda -\alpha\},\min\{\beta,\lambda\}))\d \alpha\d\beta = I_1 + I_2 + I_3,
    \end{equation*}
    where
        \begin{align*}
        I_1 = \int_\lambda^1\int_0^1\frac{\min\{\beta,\lambda\}}{\alpha\beta}\d\beta\d\alpha = \int_\lambda^1\frac{\lambda}{\alpha}\d\alpha +\lambda\left(\int_{\lambda}^1\frac{1}{\alpha}\d\alpha\right)^2= \lambda(\log(\lambda)-1)\log(\lambda),
    \end{align*}
        \begin{align*}
        I_2 =  \int_0^\lambda\int_\lambda^1\frac{\lambda - (\lambda-\alpha)}{\alpha\beta}\d \beta\d \alpha = -\lambda\log(\lambda),
    \end{align*}
    and
    \begin{align*}
        I_3 &= \int_0^\lambda\int_{\lambda-\alpha}^\lambda\frac{\alpha+\beta-\lambda}{\alpha\beta}\d\beta\d\alpha =\lambda - \int_0^\lambda\frac{(\lambda-\alpha)(\log(\lambda) - \log(\lambda-\alpha))}{\alpha}\d\alpha \\
        &=\lambda + \lambda\int_0^\lambda\frac{(1-t)\log(1-t)}{t}\d t = \lambda(2-\Li_2(\lambda)),
    \end{align*}
    Adding all of these integrals delivers the required formula for $0<\lambda<1$. In view of the fact that $t\log(t)\to 0$ as $t\to 0^+$, performing the same calculations with $\lambda = 1$ and invoking Lemma \ref{lem5.4} gives
    \begin{equation*}
        J(1) =2-\Li_2(1) = 2-\zeta(2).
    \end{equation*}

    Now consider the case where $1<\lambda<2$. We have
    \begin{align*}
        J(\lambda) &= \int_{\lambda - 1}^1\int_{\lambda - \alpha}^1\frac{\beta-(\lambda-\alpha)}{\alpha\beta}\d\beta\d\alpha = \int_{\lambda -1}^1\frac{(\lambda-\alpha)\log(\lambda-\alpha) - (\lambda - \alpha - 1)}{\alpha}\d\alpha\\
        &= 2 + 2(\lambda-1)(\log(\lambda-1) - 1) + \lambda\int_{1-\frac{1}{\lambda}}^{\frac{1}{\lambda}}\frac{\log(\lambda)+\log(1-t)}{t}\d t\\
        & = 2 + 2(\lambda-1)(\log(\lambda-1) - 1) -\lambda\log(\lambda)\log(\lambda-1) + \lambda(\Li_2(1-\lambda^{-1}) - \Li_2(\lambda^{-1})).
    \end{align*}
    Applying the reflection formula from Lemma \ref{lem5.4} allows us to write
    \begin{equation*}
       \Li_2(1-\lambda^{-1}) = \zeta(2) + \log(\lambda)\log(1-\lambda^{-1}) - \Li_2(\lambda^{-1}).
    \end{equation*}
    Inserting this into the above expression for $J(\lambda)$ finishes the proof.
\end{proof}

\subsection{Real density of difference-type solutions}

For all $\eta>0$, let
\[ K_{\eta}(\lambda) :=    (2\eta)^{-1} \int_{(0,1)^4} \1_{|ax - by - \lambda| < \eta} \d a \d b \d x \d y. \]
Following similar calculations as in the proof of Lemma \ref{lem5.3}, we find that
\begin{equation}\label{eqn5.5}
    K_\eta(\lambda) = \widetilde K_\eta(\lambda) + O(\eta(\log(2/\eta)^2)= K(\lambda) + O(\eta(\log(2/\eta)^2) \qquad(\lambda\in\R),
\end{equation}
where
\begin{equation*}
    \widetilde{K}_\eta(\lambda):=\int_{(\eta,1)^2}\frac{1}{\alpha\beta}\mu([\lambda,\beta+\lambda]\cap [0,\alpha])\d\alpha\d\beta.
\end{equation*}
Proceeding as in the previous subsection, we now establish the following formula for $K(\lambda)$.

\begin{lemma}\label{lem5.6}
    For each $\lambda\in\R$, let $K(\lambda)$ be defined as in \eqref{eqn4.4}. For all real $0<|\lambda|< 1$, we have
    \begin{equation*}
        K(\lambda)= \lambda(\Li_2(\lambda) - \zeta(2) + (1/2)\log^2(\lambda)) + (1-\lambda)(\lambda\log(\lambda) - \log(1-\lambda)+2).
    \end{equation*}
    We also have $K(0)=2$. If $|\lambda|\geqslant 1$, then $K(\lambda)=0$.
\end{lemma}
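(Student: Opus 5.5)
We compute $K(\lambda)$ directly from \eqref{eqn4.4}, treating the cases $\lambda=0$, $|\lambda|\geqslant 1$, $0<\lambda<1$ and $-1<\lambda<0$ separately. Throughout, $\mu([\lambda,\beta+\lambda]\cap[0,\alpha]) = \max\{0,\min\{\alpha,\lambda+\beta\}-\max\{0,\lambda\}\}$ for $\alpha,\beta\in(0,1)$.

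\emph{The easy cases.} If $\lambda\geqslant 1$, the interval $[\lambda,\lambda+\beta]$ meets $[0,\alpha]\subset[0,1)$ in at most a point. If $\lambda\leqslant -1$, then $\lambda+\beta<0$. Either way the integrand vanishes, so $K(\lambda)=0$. If $\lambda=0$, the integrand is $\min\{\alpha,\beta\}/(\alpha\beta)=1/\max\{\alpha,\beta\}$. By symmetry in $\alpha,\beta$,
\[ K(0)=2\int_0^1\frac{1}{\alpha}\int_0^\alpha \d\beta\d\alpha = 2. \]

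\emph{Negative $\lambda$.} I would reduce $-1<\lambda<0$ to $0<|\lambda|<1$ by symmetry. The substitution $(a,x,b,y)\mapsto(b,y,a,x)$ shows $K_\eta(\lambda)=K_\eta(-\lambda)$ for every $\eta>0$. Letting $\eta\to0^+$ in \eqref{eqn5.5} gives $K(\lambda)=K(-\lambda)=K(|\lambda|)$. Hence the formula, read with $|\lambda|$ in place of $\lambda$ (which is presumably the intended meaning when $\lambda<0$), follows from the positive case. A direct computation is also possible: the intersection is $[0,\min\{\alpha,\lambda+\beta\}]$ for $\beta>|\lambda|$, and the substitution $\beta\mapsto\beta-|\lambda|$ gives a similar integral.

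\emph{The case $0<\lambda<1$.} The integrand vanishes unless $\alpha>\lambda$, in which case it equals $\min\{\alpha-\lambda,\beta\}/(\alpha\beta)$. I split the $\beta$-integral at $\beta=\alpha-\lambda$. For $\beta<\alpha-\lambda$ the integrand is $1/\alpha$, contributing $(\alpha-\lambda)/\alpha$. For $\alpha-\lambda\leqslant\beta<1$ it contributes $\frac{\alpha-\lambda}{\alpha}\log\frac{1}{\alpha-\lambda}$. Therefore
\[ K(\lambda)=\int_\lambda^1\Big(1-\frac{\lambda}{\alpha}\Big)\big(1-\log(\alpha-\lambda)\big)\d\alpha. \]
The pieces $\int_\lambda^1 (1-\lambda/\alpha)\d\alpha$ and $\int_\lambda^1\log(\alpha-\lambda)\d\alpha$ are elementary. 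They give $1-\lambda+\lambda\log\lambda$ and $(1-\lambda)\log(1-\lambda)-(1-\lambda)$, respectively.

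The main obstacle is the remaining term $\lambda\int_\lambda^1 \frac{\log(\alpha-\lambda)}{\alpha}\d\alpha$. I would substitute $\alpha=\lambda/s$ with $s\in(\lambda,1)$, so that
\[ \log(\alpha-\lambda)=\log\lambda+\log(1-s)-\log s . \]
This turns the term into
\[ \lambda\int_\lambda^1\frac{\log\lambda+\log(1-s)-\log s}{s}\d s. \]
Using \eqref{eqn5.4}, the middle piece is $\Li_2(\lambda)-\zeta(2)$. The other two pieces are $-\log^2\lambda$ and $\tfrac12\log^2\lambda$, giving $-\tfrac12\log^2\lambda$ in total.

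Finally I would collect terms, using Lemma \ref{lem5.4} only for $\Li_2(1)=\zeta(2)$. The check is whether the sum matches the claimed formula
\[ \lambda\big(\Li_2(\lambda)-\zeta(2)+\tfrac12\log^2\lambda\big)+(1-\lambda)\big(\lambda\log\lambda-\log(1-\lambda)+2\big). \]
The step most prone to error is the sign and constant bookkeeping in this assembly.
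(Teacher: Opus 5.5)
Your reduction and every intermediate value are correct, and they follow the paper's own route: split the $\beta$-integral at $\beta=\alpha-\lambda$, then substitute $\alpha=\lambda/t$. The gap is the step you deferred, because the pieces do \emph{not} assemble into the stated formula. For $0<\lambda<1$, adding $1-\lambda+\lambda\log\lambda$, subtracting $(1-\lambda)\log(1-\lambda)-(1-\lambda)$, and adding $\lambda\bigl(\Li_2(\lambda)-\zeta(2)-\tfrac12\log^2\lambda\bigr)$ gives
\[ K(\lambda)=\lambda\bigl(\Li_2(\lambda)-\zeta(2)-\tfrac12\log^2\lambda\bigr)+\lambda\log\lambda+(1-\lambda)\bigl(2-\log(1-\lambda)\bigr). \]
The right-hand side in the statement exceeds this by $\lambda\log\lambda\,(\log\lambda-\lambda)$, which is strictly positive on $(0,1)$.

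Your value is the right one, as two independent checks show. At $\lambda=\tfrac12$ a direct evaluation gives $K(\tfrac12)=\int_0^{1/2}\frac{x}{x+1/2}(1-\log x)\d x=1-\frac{\pi^2}{24}-\frac12\log^2 2\approx 0.349$, whereas the stated formula gives about $0.762$. Also, integrating \eqref{eqn4.4} in $\lambda$ shows $\int_{\R}K=1$, so by evenness $\int_0^1K=\tfrac12$. Your expression satisfies this, while the stated one integrates to $\tfrac{31}{36}$. So the displayed identity is false for every $0<\lambda<1$, and hence also for $-1<\lambda<0$ under your $|\lambda|$ reading. No bookkeeping will close this step.

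The paper reaches the stated formula only through a slip in the substitution. It records $\int_\lambda^1\frac{\log\lambda}{t}\d t$ as $-\lambda\log\lambda$ instead of $-\log^2\lambda$; you have this right. The slip flips the sign of the $\tfrac12\lambda\log^2\lambda$ term and introduces a spurious $-\lambda^2\log\lambda$ into $K$.

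The remaining assertions are correct, and your arguments for them are fine:
\begin{itemize}
\item $K(0)=2$;
\item $K(\lambda)=0$ for $|\lambda|\geqslant1$;
\item $K(-\lambda)=K(\lambda)$.
\end{itemize}
For evenness, the paper's route is more self-contained than going through $K_\eta$ and \eqref{eqn5.5}: it uses the identity $\mu([\lambda,\beta+\lambda]\cap[0,\alpha])=\mu([0,\beta]\cap[-\lambda,\alpha-\lambda])$ and then swaps $\alpha$ and $\beta$. What your argument actually proves is the lemma with the corrected formula displayed above.
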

\begin{proof}
    It follows immediately from the expression for the integrand in the definition of $K(\lambda)$ that $K(\lambda) = 0$ whenever $|\lambda|\geqslant 1$. Moreover, by interchanging $\alpha$ and $\beta$ and appealing to the identity
    \begin{equation*}
        \mu([\lambda,\beta+\lambda]\cap [0,\alpha]) = \mu([0,\beta]\cap [-\lambda,\alpha-\lambda]),
    \end{equation*}
    we note that $K(\lambda) = K(-\lambda)$ for all $\lambda\in\R$.

    By direct computation, we see that
    \begin{equation*}
        K(0) = \int_{(0,1)^2}\frac{\min\{\alpha,\beta\}}{\alpha\beta}\d\alpha\d\beta =  1 +\int_0^1\int_\alpha^1\frac{1}{\beta}\d\beta\d\alpha = 2.
    \end{equation*}
    It only remains to consider $0<\lambda<1$. In this case, we have
    \begin{align*}
        K(\lambda) &= \int_{(0,1)^2}\frac{1}{\alpha\beta}\mu([\lambda,\min\{\alpha,\beta+\lambda\}]\d\beta\d\alpha = \int_\lambda^1 \frac{\alpha-\lambda}{\alpha}\d\alpha - \int_\lambda^1\frac{(\alpha-\lambda)\log(\alpha-\lambda)}{\alpha}\d\alpha\\
        &= 1+\lambda(\log(\lambda) - 1) - (1-\lambda)(\log(1-\lambda) - 1) + \lambda\int_\lambda^1\frac{\log(\alpha - \lambda)}{\alpha}\d\alpha.
    \end{align*}
    Recall from Lemma \ref{lem5.4} that $\Li_2(1) = \zeta(2)$. The change of variables $\alpha = \lambda/t$ therefore reveals that
    \begin{align*}
        \int_\lambda^1\frac{\log(\alpha - \lambda)}{\alpha}\d\alpha &= -\lambda\log(\lambda) - \int_\lambda^1\frac{\log(t)}{t}\d t+ \int_\lambda^1\frac{\log(1-t)}{t}\d t\\
        &=\frac{1}{2}\log^2(\lambda) - \lambda\log(\lambda) + \Li_2(\lambda) -\zeta(2).
    \end{align*}
Inserting this into our previous expression completes the proof.
\end{proof}
\subsection{Proof of Proposition \ref{prop1.5}}

We conclude this section by proving Proposition \ref{prop1.5}. 

\begin{proof}[Proof of Proposition \ref{prop1.5}]
As noted earlier, the formula for the singular series $\fS_h$ follows from Lemma \ref{lem5.1} and \eqref{eqn5.2}. 
In view of the identity
\begin{equation*}
    \int_{[-1,1]^4} \1_{|x_1x_4+ x_2x_3 - \lambda| < \eta} \d \mathbf{x} = 4\int_{(0,1)^4} \1_{|x_1x_4+ x_2x_3 - \lambda| < \eta} \d \mathbf{x} + 8\int_{(0,1)^4} \1_{|x_1x_4- x_2x_3 - \lambda| < \eta} \d \mathbf{x}
\end{equation*}
and the definitions of $\sigma_\infty(\lambda)$, $J(\lambda)$, and $K(\lambda)$, we deduce from Lemma \ref{lem5.3} and \eqref{eqn5.5} that $\sigma_\infty(\lambda) = 4J(\lambda) + 8K(\lambda)$. We also noted, in the remarks following \eqref{eqn3.7}, that $0<J(\lambda)\leqslant 4$ whenever $0<\lambda<2$, and $J(\lambda) = 0$ otherwise. A similar argument shows that $0\leqslant K(\lambda)\leqslant 4$ for all $\lambda\in\R$, and we know from Lemma \ref{lem5.6} that $K(0)=2$. Together these observations reveal that
\begin{equation*}
    0\leqslant \sigma_\infty(\lambda)=4J(\lambda) + 8K(\lambda)\leqslant 48
\end{equation*}
for all $\lambda\in\R$, with $\sigma_\infty(\lambda)>0$ whenever $0\leqslant\lambda<2$. The remaining expressions for the values of $\sigma_\infty(\lambda)$ follow immediately from Lemmas \ref{lem5.5} and \ref{lem5.6}. This concludes the proof of Proposition \ref{prop1.5}.
\end{proof}

\section{Error term analysis for Theorem \ref{thm1.1}}\label{sec6}

In this section, we analyse the error term 
\begin{equation} \label{eqn6.1}
    \cE(h,N):=\sum_{\substack{d|h \\ d \leqslant N}} \sum_{\substack{1 \leq u,v \leq N/d,\\ (u,v) = 1}}(\1_{\mathbb{Z}}(U)\1_{V \geq U} - ( \psi(V) - \psi(U))\1_{V \geq U}) 
\end{equation} 
described in \eqref{eqn3.4}, where $U$ and $V$ are given by either \eqref{eqn3.2} or \eqref{eqn4.2}. We treat both the additive-type and difference-type error terms simultaneously by considering arbitrary $U$ and $V$ of the form $(r_1 + \overline{v} r_2)/u$ and $(q_1 + \overline{u} q_2)/v$, where $\overline{v}$ is the multiplicative inverse of $v$ in $(\mathbb{Z}/u \mathbb{Z})^{\times}$ and $\overline{u}$ is the multiplicative inverse of $u$ in $(\mathbb{Z}/v \mathbb{Z})^{\times}$ and $r_1, r_2, q_1, q_2$ are suitably chosen integers.

We begin by bounding the contribution of the first term in \eqref{eqn6.1}.

\begin{lemma} \label{lem6.1}
We have
\begin{equation*}
\sum_{\substack{d|h \\ d \leqslant N}} \sum_{\substack{1 \leq u,v \leq N/d,\\ (u,v) = 1}} \1_{\mathbb{Z}}(U)\1_{V \geq U} \leq \sum_{\substack{d|h \\ d \leqslant N}} \sum_{\substack{1 \leq u,v \leq N/d,\\ (u,v) = 1}} \1_{\mathbb{Z}}(U) \ll_{\eps} N^{1 + \eps}. 
\end{equation*}
\end{lemma}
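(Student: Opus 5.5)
The first inequality is immediate, since dropping the factor $\1_{V\geq U}\in\{0,1\}$ can only increase each summand. For the second, I would translate $U\in\Z$ into a divisibility condition on $(u,v)$ and then count pairs. The point is that $U$ is a maximum of two rationals. Hence $\1_{\Z}(U)\leq \1_{\Z}(U_1)+\1_{\Z}(U_2)$, where $U_1,U_2$ are the two candidates. I would treat each separately.

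\emph{Additive case \eqref{eqn3.2}.} Here $U_1=(1-b_0)/u$ with $b_0\in[u]$, so $U_1\in\Z$ forces $b_0\equiv 1\mmod{u}$, that is $b_0=1$. By definition of $b_0$ this gives $v\equiv h/d \mmod{u}$. The other candidate is $U_2=(a_0-N)/v$. Recall $a_0=(h/d-b_0v)/u$, so $ua_0\equiv h/d\mmod v$. Since $(u,v)=1$, the condition $a_0\equiv N\mmod v$ is equivalent to $uN\equiv h/d\mmod{v}$.

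\emph{Difference case \eqref{eqn4.2}.} This is analogous. $U_2=(1-b_0)/u\in\Z$ forces $b_0=1$, i.e.\ $v\equiv -h/d\mmod u$. $U_1=(1-a_0)/v\in\Z$ means $a_0\equiv 1\mmod v$, which, using $ua_0=h/d+b_0v$, becomes $u\equiv h/d\mmod v$.

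So in every case I must bound, for $m=h/d$ and $M=N/d$, the number of coprime pairs $u,v\leq M$ satisfying a condition of the form $v\equiv \pm m\mmod u$ or $cu\equiv m\mmod v$, where $c\in\{1,N\}$.

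\emph{First type of condition.} Fix $u$. The condition $u\mid v\mp m$ places $v$ in a single residue class mod $u$. This gives at most $M/u+1$ values of $v$, and summing over $u\leq M$ gives $\ll M\log M+M\ll N^{1+\eps}$.

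\emph{Second type of condition.} Fix $v$. The condition $v\mid cu-m$ restricts $u$ to solutions of a linear congruence modulo $v$. This needs care when $(c,v)>1$, but since $(u,v)=1$ the congruence $cu\equiv m\mmod v$ has at most $(c,v)$ solutions mod $v$ (and none unless $(c,v)\mid m$). So the number of $u\leq M$ is $\ll (c,v)(M/v+1)$. Since $v\leq M$, this is $\ll (c,v)M/v$. Summing over $v$, with $c=N$ in the worst case, I get
\[ M\sum_{v\leq M}\frac{(N,v)}{v}\ll M N^{\eps}\log M \]
by partial summation from Lemma \ref{lem2.1} (dyadic blocks $v\sim V$ contribute $\ll V N^{\eps}/V$).

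\emph{Summing over $d$.} Finally, summing $M=N/d$ over $d\mid h$, $d\leq N$, gives $\ll N^{1+\eps}\sum_{d\mid h}1/d\ll N^{1+2\eps}$ by the divisor bound, since $h\leq 2N^2$.

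The only mildly delicate step is the second type of condition with the modulus-dependent factor $(N,v)$, which is exactly what Lemma \ref{lem2.1} absorbs. A minor edge case is $u=1$ or $v=1$, where the inverses and $b_0$ are trivial; those terms contribute $O(M)$ trivially.
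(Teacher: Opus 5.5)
Your proposal is correct and follows essentially the same route as the paper: turn the integrality of $U$ into a congruence condition, count the admissible $u$ or $v$ in residue classes (giving $\ll N/(du)$ or $\ll N/(dv)$ each), and sum the resulting harmonic series over the free variable and over $d\mid h$. Your explicit computation is more careful than the paper's proof in one place. The paper asserts that the condition always has the shape $u\equiv r_2 \mmod{v}$ with $r_2$ independent of $u,v$. In the additive case it is really $Nu\equiv h/d \mmod{v}$, which can have up to $(N,v)$ solutions modulo $v$, and your use of Lemma \ref{lem2.1} to absorb that factor closes the step the paper skips with ``without loss of generality''.
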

\begin{proof}
Note that for any value that $U$ may take as described in \eqref{eqn3.2}, the hypothesis that $U$ is an integer either prescribes a congruence condition of the shape $v \equiv r_1 \mmod{u}$ or $u \equiv r_2 \mmod{u}$, for some integers $r_1, r_2$ which are independent of $u, v$. Thus, without loss of generality, we have
\[ \sum_{\substack{d|h \\ d \leqslant N}} \sum_{\substack{1 \leq u,v \leq N/d,\\ (u,v) = 1}} \1_{\mathbb{Z}}(U) \leq \sum_{d|h} \sum_{1 \leq u \leq N/d} \sum_{\substack{ 1 \leq v \leq N/d, \\ v \equiv r_1 \mmod{u}}}  1,  \]
for some fixed $r_1 \in \mathbb{Z}$. The innermost sum on the right hand side admits a trivial upper bound of the shape $O(N/du)$. Thus,
\[ \sum_{\substack{d|h \\ d \leqslant N}} \sum_{\substack{1 \leq u,v \leq N/d,\\ (u,v) = 1}} \1_{\mathbb{Z}}(U) \ll \sum_{d|h, d\leq N} \sum_{1 \leq u \leq N/d} \frac{N}{du} \ll_{\eps} N^{1 + \eps}, \]
which concludes our proof of Lemma \ref{lem6.1}.
\end{proof}

It remains to bound the contribution of the second and third terms in \eqref{eqn6.1}. Our goal is to prove the following. 

\begin{lemma} \label{lem6.2}
We have
\begin{equation*}
\sum_{d|h, d \leq N} \sum_{\substack{1 \leq u, v \leq N/d, \\ (u,v) = 1}}\psi(V) \1_{V \geq U} \ll_{\eps} N^{3/2 + \eps}. 
\end{equation*}
\end{lemma}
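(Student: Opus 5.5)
The plan is to follow the outline given in the introduction. Fix $d\mid h$ with $d\leq N$. First I will make the summation region explicit. For each of the two possible values of $U$ and of $V$ (in \eqref{eqn3.2} or \eqref{eqn4.2}), the condition $V\geq U$ together with the choice of which term realises the minimum in $V$ is a condition on $(u,v)$ through linear and bilinear inequalities. These are of the type $v\leq h/(dN)$, $u\geq h/(dN)$, $u+v<h/d$ or $u-v\geq h/d$ that appear in the proofs of Lemma \ref{lem3.1} and \eqref{eqn4.3}. So the set of $(u,v)\in[1,N/d]^2$ where $V\geq U$ and $V$ takes a given form splits into $O(1)$ convex pieces. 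On each piece, for fixed $u$, the variable $v$ runs over an interval $I_u$ with $|I_u|\leq N/d$, and symmetrically when we fix $v$.

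On each piece $V$ has one of two shapes. The first is $(r_1+r_2\overline{v})/u$, with $r_1,r_2$ depending only on $h/d$ and $N$ and $\overline v$ the inverse modulo $u$. The second is $(q_1+q_2\overline{u})/v$. By symmetry it suffices to treat the first shape, with $u$ as the outer variable.

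Next I apply Lemma \ref{lem2.2} with $Q=N^{100}$ to get
\[ \sum_{u}\sum_{\substack{v\in I_u\\ (u,v)=1}}\psi\Big(\frac{r_1+r_2\overline v}{u}\Big)=\sum_{0<|s|\leq Q}\frac{1}{2\pi i s}\sum_u e\Big(\frac{sr_1}{u}\Big)\sum_{\substack{v\in I_u\\(u,v)=1}} e\Big(\frac{sr_2\overline v}{u}\Big)+O\Big(\sum_u\sum_{v}\min\Big\{1,\frac{1}{Q\|(r_1+r_2\overline v)/u\|}\Big\}\Big). \]
The remainder term needs care, and I expect it to be the main obstacle. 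Whenever $\|(r_1+r_2\overline v)/u\|\geq 1/u$, its summand is at most $u/Q\leq N^{-99}$, which is negligible. The only pairs that cost anything are those with $r_1+r_2\overline{v}\equiv 0 \mmod u$. For these, $\overline{v}$ is confined to a single residue class modulo $u/(r_2,u)$ (and there are none unless $(r_2,u)\mid r_1$). Hence $v$ lies in at most $(r_2,u)$ classes modulo $u$. Counting $v\in I_u$ gives $\ll (r_2,u)(N/(du)+1)$. Summing over $u\leq N/d$ with Lemma \ref{lem2.1} yields $\ll_\eps N^{1+\eps}$, where I need $r_2\neq0$ and $|r_2|\ll N^{O(1)}$ for the divisor bound. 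If this needs adjusting, I would instead use the $\min\{1,\cdot\}$ form of Lemma \ref{lem2.2} with a slightly smaller $Q$.

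For the main part, I take absolute values and apply Lemma \ref{lem2.4} with $r=sr_2$:
\[ \ll N^{\eps}\sum_{0<|s|\leq Q}\frac1{|s|}\sum_{u\leq N/d}\frac{(sr_2,u)^{1/2}}{u^{1/2}}\Big(\frac Nd+u\log u\Big)\ll N^{\eps}\,\frac Nd\sum_{0<|s|\leq Q}\frac1{|s|}\sum_{u\leq N/d}\frac{(sr_2,u)^{1/2}}{u^{1/2}}. \]
To bound the inner sum, I apply Cauchy--Schwarz in the form
\[ \sum_{u\leq M}(m,u)^{1/2}u^{-1/2}\leq\Big(\sum_{u\leq M}(m,u)\Big)^{1/2}\Big(\sum_{u\leq M} u^{-1}\Big)^{1/2}\ll_\eps M^{1/2+\eps}, \]
where the first factor is controlled by Lemma \ref{lem2.1}. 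Here $m=sr_2$ is at most polynomial in $N$, so $m^\eps\ll N^{O(\eps)}$. The sum over $s$ contributes $\log Q\ll\log N$. This gives $\ll_\eps N^{\eps}(N/d)^{3/2}$. Summing over the $\ll_\eps N^\eps$ divisors $d$ and the $O(1)$ pieces gives the claimed bound $O_\eps(N^{3/2+\eps})$.
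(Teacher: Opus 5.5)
Your proposal is correct and follows the paper's proof essentially step for step. The paper likewise splits $[1,N/d]^2$ into convex pieces according to which expressions realise $U$ and $V$, so that $v$ runs over an interval $I_u$ with $|I_u|\leq N/d$, and then proves your bound $N^{\eps}(N/d)^{3/2}$ in Lemma \ref{lem6.3}, using Lemma \ref{lem2.2} with $Q=N^{100}$, Lemma \ref{lem2.4}, and Cauchy--Schwarz with Lemma \ref{lem2.1}. Your treatment of the degenerate pairs with $r_1+r_2\overline{v}\equiv 0 \mmod{u}$ allows up to $(r_2,u)$ residue classes for $v$; this is slightly more careful than the corresponding count in \eqref{eqn6.3}, and it still gives $O_\eps(N^{1+\eps}/d)$.
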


The case when $\psi(V)$ is replaced by $\psi(U)$ can be analysed similarly. In order to prove Lemma \ref{lem6.2}, we require the following preliminary estimate. 

\begin{lemma} \label{lem6.3}
    For every $1 \leq u \leq N/d$, let $I_u$ be some interval such that $|I_u| \leq N/d$. Let $r_1, r_2 \leq N^{10}$ be some non-zero integers. Then 
    \[ \sum_{1 \leq u \leq N/d} \sum_{\substack{v \in I_u\\ (u,v) = 1}} \psi((r_1 + \overline{v} r_2)/u) \ll N^{\eps} (N/d)^{3/2}.  \]
\end{lemma}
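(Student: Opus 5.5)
We expand $\psi$ by Lemma \ref{lem2.2} with a large truncation parameter $Q = N^{100}$. For each $u$ and $v \in I_u$ with $(u,v)=1$, write $\theta_{u,v} = (r_1 + \overline{v} r_2)/u$. Lemma \ref{lem2.2} gives
\[ \psi(\theta_{u,v}) = \sum_{0<|s|\leq Q} \frac{e(s r_1/u)\, e(s r_2 \overline{v}/u)}{2\pi i s} + O\Big(\min\Big\{1, \frac{1}{Q\n{\theta_{u,v}}}\Big\}\Big). \]

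\textbf{The remainder term.} Since $\theta_{u,v}$ has denominator $u \le N$, either $\n{\theta_{u,v}} = 0$ or $\n{\theta_{u,v}} \geq 1/u \geq 1/N$.
\begin{enumerate}[(i)]
\item If $\n{\theta_{u,v}} \geq 1/N$, the remainder is $O(N/Q) = O(N^{-99})$. Summed over all $O(N^2)$ pairs $(u,v)$, this is negligible.
\item If $\n{\theta_{u,v}} = 0$, then $r_1 + \overline{v} r_2 \equiv 0 \mmod{u}$. This fixes $\overline{v}$, and hence $v$, modulo $u/(r_2,u)$.
\end{enumerate}
So for fixed $u$, case (ii) contributes $O(1 + (N/d)(r_2,u)/u)$ pairs $v \in I_u$. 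Summing over $u \le N/d$ with Lemma \ref{lem2.1} (in dyadic ranges, or via $\sum_{u} (r_2,u)/u \ll N^{\eps}\log N$) gives $O(N^{1+\eps}/d)$. This is admissible, since $N/d \le (N/d)^{3/2} d^{1/2}$. (More carefully, one should get $N^{\eps}(N/d)$, which follows since $u \le N/d$.)

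\textbf{The main term.} Exchanging the sums and taking absolute values, the main term is bounded by
\[ \sum_{0<|s|\leq Q}\frac{1}{|s|}\sum_{1\le u\le N/d}\Big|\sum_{\substack{v\in I_u\\ (v,u)=1}} e(s r_2\overline{v}/u)\Big|. \]
The factor $e(sr_1/u)$ is independent of $v$, so it disappears. Apply Lemma \ref{lem2.4} with $r = s r_2$; note $|sr_2| \le N^{110}$, and the lemma's proof only uses the divisor bound, so the $N^{\eps}$ loss persists. Together with $|I_u| \le N/d$, the inner sum is
\[ \ll N^{\eps}\,(sr_2,u)^{1/2}\big(N d^{-1} u^{-1/2} + u^{1/2}\log u\big) \ll N^{\eps}(sr_2,u)^{1/2}(N/d)u^{-1/2}, \]
using $u \le N/d$.

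\textbf{Summing over $u$.} By Cauchy--Schwarz and Lemma \ref{lem2.1},
\[ \sum_{u\le N/d}(sr_2,u)^{1/2}u^{-1/2} \le \Big(\sum_{u\le N/d}(sr_2,u)\Big)^{1/2}\Big(\sum_{u\le N/d}u^{-1}\Big)^{1/2} \ll (N/d)^{1/2}|sr_2|^{\eps}(\log N)^{1/2}. \]
Here $|sr_2|^{\eps} \ll N^{110\eps}$, which is fine after renaming $\eps$. Alternatively, split dyadically in $u$ to avoid the log. This yields a bound $N^{\eps}(N/d)^{3/2}$ for each $s$.

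\textbf{Summing over $s$.} The sum $\sum_{|s|\le Q}1/|s| \ll \log Q \ll \log N$ is absorbed into $N^{\eps}$, completing the bound.

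\textbf{Main obstacle.} The main point of care is keeping the $\gcd$ factor $(sr_2,u)$ under control uniformly in $s$. The divisor-type bound in Lemma \ref{lem2.1} depends on $m = sr_2$ only through $m^{\eps}$, and $m \le N^{O(1)}$, so this works. The other point is treating the degenerate case $\n{\theta_{u,v}}=0$ separately, since the remainder in Lemma \ref{lem2.2} is not small there.
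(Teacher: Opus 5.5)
Your proposal is correct and follows essentially the same route as the paper: truncate $\psi$ via Lemma \ref{lem2.2} with $Q=N^{100}$, bound the exponential-sum part by Lemma \ref{lem2.4} together with $u,|I_u|\leq N/d$, Cauchy's inequality and Lemma \ref{lem2.1}, and handle the remainder by separating the case $\n{(r_1+r_2\overline{v})/u}=0$ from the case $\n{(r_1+r_2\overline{v})/u}\geq 1/u$. Your count of the degenerate $v$ is in fact slightly more careful than the paper's: you fix $v$ modulo $u/(r_2,u)$, giving $O(1+(N/d)(r_2,u)/u)$ values per $u$. The aside about $(N/d)^{3/2}d^{1/2}$ is muddled but unnecessary, since $N^{1+\eps}/d = N^{\eps}(N/d)\leq N^{\eps}(N/d)^{3/2}$.
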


\begin{proof}
    We start by employing the truncated Fourier expansion of $\psi$. Thus, applying Lemma \ref{lem2.2} with $Q = N^{100}$, we have
    \begin{align} \label{eqn6.2}
    \sum_{1 \leq u \leq N/d} \sum_{\substack{v \in I_u, \\ (v,u) = 1}} \psi((r_1 + \overline{v} r_2)/u) 
    = & \sum_{1 \leq u \leq N/d}  \sum_{\substack{v \in I_u, \\ (v,u) = 1}}  \sum_{0 < |s| \leq Q} \frac{e(s(r_1 + \overline{v}r_2)/u )}{2 \pi i s} \nonumber \\
    & +  \sum_{1 \leq u \leq N/d} \sum_{\substack{v \in I_u, \\ (v,u) = 1}} O \left(  \min\left\{1, \frac{1}{Q \n{ (r_1 + r_2 \overline{v})/u}}   \right\} \right).
    \end{align}
    The absolute value of the first term on the right-hand side is bounded above by
    \[ \sum_{0 < |s| \leq Q} \frac{1}{2 \pi s} \sum_{1 \leq u \leq N/d} \bigg| \sum_{\substack{v \in I_u, \\ (v,u) = 1}} e( s r_2 \overline{v}/u) \bigg| \ll N^{\eps} \sum_{0 < |s| \leq Q} \frac{1}{s} \sum_{1 \leq u \leq N/d} \frac{(s r_2,u)^{1/2}}{u^{1/2}} ( |I_u| + u \log u). \]
    Here, the latter inequality follows from Lemma \ref{lem2.4}. Note that $u, |I_u| \leq N/d$, and so, the above is 
    \[ \ll N^{\eps} (N/d) \sum_{0 < |s| \leq Q} \frac{1}{s} \sum_{1 \leq u \leq N/d} \frac{(sr_2, u)^{1/2}}{u^{1/2}}.  \]
    Now, applying Cauchy's inequality to the inner sum and using Lemma \ref{lem2.1}, we find that
    \[ \sum_{1 \leq u \leq N/d} \frac{(sr_2, u)^{1/2}}{u^{1/2}} \leq \bigg( \sum_{1 \leq u \leq N/d} (sr_2, u) \bigg)^{1/2} \bigg( \sum_{1 \leq u \leq N/d} \frac{1}{u} \bigg)^{1/2} \ll N^{\eps} (N/d)^{1/2}. \]
    Putting everything together, we see that the first term on the right-hand side in \eqref{eqn6.2} is 
    \[ \ll  N^{\eps} (N/d)^{3/2} \sum_{0 < |s| \leq Q} \frac{1}{s} \ll N^{\eps} (N/d)^{3/2}. \]

    We now analyse the second term on the right-hand side in \eqref{eqn6.2}. We begin by noting that for any $t \in \mathbb{Z}$, we have $\n{t/u} \geq 1/u$ unless $u \mid t$. Thus,
    \begin{align} \label{eqn6.3}
        \sum_{1 \leq u \leq N/d} & \sum_{\substack{v \in I_u, \\ (v,u) = 1}}   \min\left\{1, \frac{1}{Q \n{ (r_1 + r_2 \overline{v})/u}}   \right\}   \ll \sum_{1 \leq u\leq N/d} \sum_{\substack{v \in I_u, \\ (u,v) = 1, \\ r_1 + r_2 \overline{v} \equiv 0 \mmod{u} }} 1 +  \sum_{1 \leq u \leq N/d} \sum_{\substack{v \in I_u, \\ (u,v) = 1}} \frac{u}{Q} \nonumber  \\
        & \ll  \sum_{\substack{ 1 \leq u \leq N/d, \\ u \nmid r_2}}  \frac{N/d}{u}  +    \sum_{\substack{ 1 \leq u \leq N/d, \\ u \mid r_2}}  N/d     +  \sum_{ 1 \leq u \leq N/d} \frac{N^2}{Q}  \ll N^{1 + \eps}/d. 
    \end{align}
    This completes the proof of Lemma \ref{lem6.3}.
\end{proof}

We now return to the proof of Lemma \ref{lem6.2}.

\begin{proof}[Proof of Lemma \ref{lem6.2}]
Notice that both $u,v$ always lie in the set $[1, N/d] \times [1,N/d]$. Let us consider the case of additive-type solutions first. The assumption that $U = (1 - b_0)/u$ holds is equivalent to the condition that $\alpha_1 u + \beta_1 v \leq \gamma_1$ for some choice of $\alpha_1, \beta_1, \gamma_1 \in \mathbb{R}$. A similar comment holds for fixing $V$ to equal either $(N- b_0)/u$ or $(a_0 - 1)/v$. Thus, fixing $U$ and $V$ to take specific values as recorded in \eqref{eqn3.2} and then subsequently fixing the condition that $V \geq U$ is equivalent to ensuring that the vector $(u,v)$ lies in a specific convex region $\cC$ inside  $[1, N/d] \times [1,N/d]$. This would mean that for any fixed $u \in  [1,N/d] \cap \mathbb{Z}$, the set $\cC \cap( \{u\} \times \mathbb{Z})$ is precisely of the form $\{u\} \times I_u$ where $I_u$ is some interval of integers. Moreover, since this is a subset of $[1, N/d] \times [1,N/d]$, we see that $|I_u| \leq N/d$. A similar comment holds for the case when we fix some $v \in [1,N/d] \cap \mathbb{Z}$ and consider all $u \in [1,N/d] \cap \mathbb{Z}$ which satisfy the above conditions. Now since there are two possibilities each for $U$ and $V$, all of which look like $(r + q\overline{x})/y$ where $\{x,y\} = \{ u, v\}$ and $r,q$ are non-zero integers lying in the interval $[-N^{10},N^{10}]$ such that $y \nmid q$, we see that
\begin{align*}
    \sum_{d|h, d \leq N} \sum_{\substack{1 \leq u, v \leq N/d, \\ (u,v) = 1}}\psi(V) \1_{V \geq U} \ll \sum_{d|h, d \leq N} \sum_{i=1}^4 \sum_{1 \leq x_i \leq N/d} \sum_{\substack{ y_i \in I_{x_i}, \\ (x_i,y_i) = 1}} \psi((r_i+q_i y_i)/x_i)  .
\end{align*}
We may now apply Lemma \ref{lem6.3} to deduce that
\[ \sum_{d|h, d \leq N} \sum_{\substack{1 \leq u, v \leq N/d, \\ (u,v) = 1}}\psi(V) \1_{V \geq U} \ll_{\eps} N^{3/2 + \eps} \sum_{d|h, d \leq N} 1/d^{3/2} \ll N^{3/2 + \eps}. \]
This is precisely the desired bound. 
\end{proof}

One can proceed mutatis mutandis when we study $\psi(V)$ with $\psi(U)$ and when we consider difference-type solutions.

\section{Proof of Theorem \ref{thm1.1} and Corollary \ref{cor1.2}} \label{sec7}

We now bring together all of the observations and results developed over the course of the paper to establish Theorem \ref{thm1.1} and Corollary \ref{cor1.2}. We begin with the proof of Theorem \ref{thm1.1}.

\begin{proof}[Proof of Theorem \ref{thm1.1}]
    Our goal is to obtain an asymptotic formula for the number $T(h,N)$ of integer solutions $(x_1,x_2,x_3,x_4)\in[-N,N]^4$ to the equation
    \begin{equation*}
        x_1x_4 - x_2x_3 = h.
    \end{equation*}
    By the divisor bound, there are at most $O_\eps(N^{1+\eps})$ solutions where at least one of the $x_i$ is equal to $0$. Consider then solutions where the $x_i$ are all non-zero. There are $16$ choices for the signs of the $x_i$. Since $h\geqslant 1$, the four choices which cause $x_1x_4 < 0<x_2x_3$ result in no solutions. The four choices of signs for which $x_2x_3<0<x_1x_4$ lead to counting solutions over $[N]^4$ to the additive equation \eqref{eqn3.1}. Similarly, the remaining eight sign choices reduce our problem to counting solutions over $[N]^4$ to the difference equation \eqref{eqn4.1}. Putting this all together, we have
    \begin{equation} \label{eqn7.1}
        T(h,N) = 4T_+(h,N) + 8T_-(h,N) + O_\eps(N^{1+\eps}),
    \end{equation}
    where
    \begin{equation*}
        T_\pm(h,N):=|\{ (x_1,x_2,x_3,x_4) \in \{1,\ldots,N\}^4 : x_1x_4\pm x_2x_3 =h\}|.
    \end{equation*}

    We evaluated the leading order of the quantities $T_\pm(h,N)$ in Propositions \ref{prop3.3} and \ref{prop4.1}, and so
    \begin{equation}\label{eqn7.2}
        T(h,N) = \frac{4J(h/N^2) + 8K(h/N^2)}{\zeta(2)}N^2 \sum_{d\mid h}\frac{1}{d} + 4\cE_+(h,N) + 8\cE_-(h,N) + O_\eps(N^{1+\eps}),
    \end{equation}
    where the error terms $\cE_+(h,N)$ and $\cE_-(h,N)$ are as defined in Propositions \ref{prop3.3} and \ref{prop4.1} respectively. Both of these error terms take the form \eqref{eqn6.1}. Invoking Lemmas \ref{lem6.1} and \ref{lem6.2}, we have
    \begin{equation*}
        4\cE_+(h,N) + 8\cE_-(h,N) \ll_\eps N^{\frac{3}{2} +\eps}.
    \end{equation*}
    Finally, Proposition \ref{prop1.5} allows us to rewrite the main term in \eqref{eqn7.2} into the form
    \begin{equation*}
        T(h,N) = \sigma_{\infty}(h/N^2) \mathfrak{S}_h N^2 + O_{\eps}(N^{3/2 + \eps}),
    \end{equation*}
    which finishes the proof of Theorem \ref{thm1.1}.
\end{proof}

We now turn our attention to Corollary \ref{cor1.2}. The key feature of this result is that the leading constant is bounded away from zero in terms of a given fixed $\lambda\in[0,2)$. To deduce this from Theorem \ref{thm1.1}, we need to understand how the real density $\sigma_\infty(\lambda)$ varies with $\lambda$. The following lemma is suitable for this purpose.

\begin{lemma}\label{lem7.1}
    Let $N\in\N$ and $\lambda\in[0,2)$. Let $\sigma_\infty(\lambda)$ be as defined in Theorem \ref{thm1.1}. For all $\Delta\in\Z\cap[-N^2,N^2]$, we have
    \begin{equation*}
        \sigma_\infty\left(\lambda+\frac{\Delta}{N^2}\right) = \sigma_\infty(\lambda) + O\left( \frac{|\Delta| \log^2(N)}{N^2}\right).
    \end{equation*}
\end{lemma}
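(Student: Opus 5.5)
Since $\sigma_\infty = 4J + 8K$ by Proposition \ref{prop1.5}, it suffices to show that $J$ and $K$ are each Lipschitz-type continuous with the stated modulus. Write $\lambda' = \lambda + \Delta/N^2$ and $\delta = |\Delta|/N^2 \le 1$. We may assume $\delta \ge N^{-2}$, since $\Delta = 0$ is trivial.

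The key tool is the smoothing parameter $\eta$ from Lemma \ref{lem5.3} and \eqref{eqn5.5}. Take $\eta = \delta$, so $\eta \in [N^{-2},1]$. Lemma \ref{lem5.3} gives
\begin{equation*}
J(\mu) = \widetilde{J}_\eta(\mu) + O(\eta\log^2(2/\eta))
\end{equation*}
for every real $\mu$, and \eqref{eqn5.5} gives the analogous statement for $K$ and $\widetilde{K}_\eta$. Applying these at $\mu = \lambda$ and $\mu = \lambda'$ reduces the problem to bounding $|\widetilde{J}_\eta(\lambda) - \widetilde{J}_\eta(\lambda')|$ and $|\widetilde{K}_\eta(\lambda) - \widetilde{K}_\eta(\lambda')|$. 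The crude Lipschitz estimate \eqref{eqn5.3}, and its analogue $|\mu([\lambda,\beta+\lambda]\cap[0,\alpha]) - \mu([\lambda',\beta+\lambda']\cap[0,\alpha])| \le |\lambda-\lambda'|$ for $K$, bound the integrand difference by $\delta$. Hence
\begin{equation*}
|\widetilde{J}_\eta(\lambda) - \widetilde{J}_\eta(\lambda')| \le \delta\int_{(\eta,1)^2}\frac{\d\alpha\d\beta}{\alpha\beta} = \delta\log^2(1/\eta),
\end{equation*}
and the same bound holds for $\widetilde{K}_\eta$.

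Collecting terms gives an error $O(\delta\log^2(2/\delta))$. Since $\delta \ge N^{-2}$, we have $\log(2/\delta) \ll \log N$ (taking $N \ge 2$; the case $N = 1$ is trivial because $\sigma_\infty$ is bounded by Proposition \ref{prop1.5}). This yields $O(|\Delta|\log^2 N/N^2)$, as required.

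The only point needing care is that Lemma \ref{lem5.3} and \eqref{eqn5.5} hold for all real arguments. This matters because $\lambda'$ may lie outside $[0,2)$, including negative values where $K$ is evaluated. Lemma \ref{lem5.3} is stated for all $\lambda \in \mathbb{R}$. The analogue for $K$ is asserted in \eqref{eqn5.5} for $\lambda \in \mathbb{R}$, and I would take it as given. Beyond this I do not expect any real obstacle: the choice $\eta = \delta$ balances the smoothing error against the Lipschitz error, and both are $O(\delta\log^2(1/\delta))$.
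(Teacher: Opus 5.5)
Your argument is the paper's own: smooth at scale $\eta=\delta=|\Delta|/N^2$ via Lemma~\ref{lem5.3} and \eqref{eqn5.5}, compare $\widetilde J_\eta$ and $\widetilde K_\eta$ at the two arguments using the $1$-Lipschitz bound \eqref{eqn5.3}, and recombine through $\sigma_\infty=4J+8K$.

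One step does not hold as written, and the paper's proof has the same slip. You rightly flagged that $\lambda'=\lambda+\Delta/N^2$ can be negative (e.g.\ $\lambda=0$, $\Delta<0$). But the difficulty there is not Lemma~\ref{lem5.3} or \eqref{eqn5.5}, which are fine for all real arguments; it is the final recombination. Proposition~\ref{prop1.5} states $\sigma_\infty=4J+8K$ only for positive arguments, and for $\mu\in(-2,0)$ it is false. Splitting $[-1,1]^4$ into orthants gives, for every real $\mu$,
\[
\sigma_\infty(\mu)=4J(\mu)+4J(-\mu)+8K(\mu)=4J(|\mu|)+8K(\mu).
\]
Equivalently, $\sigma_\infty$ is even, since $Q(x_2,x_1,x_4,x_3)=-Q(\vec x)$. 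Since $J$ vanishes on $(-\infty,0]$, for $\lambda'<0$ your formula drops the positive term $4J(|\lambda'|)$. The fix is immediate. Use the displayed formula, and run your argument with the same $\eta=\delta$ on the pair $|\lambda'|,\lambda$ for $J$; this works because $\lvert\,|\lambda'|-\lambda\,\rvert\leq|\lambda'-\lambda|=\delta$, as $\lambda\geq 0$. For $K$, use the pair $\lambda',\lambda$ as you did.

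Separately, your treatment of $N=1$ is wrong. There $\log^2 N=0$, so the claimed bound would force $\sigma_\infty(\lambda\pm 1)=\sigma_\infty(\lambda)$. This fails, e.g.\ $\sigma_\infty(0)=16\neq 8-4\zeta(2)=\sigma_\infty(1)$, and boundedness of $\sigma_\infty$ only gives $O(1)$. The lemma must be read with $N\geq 2$, or with $\log(2N)$ in place of $\log N$, as the paper tacitly does. Neither issue affects the applications in Corollary~\ref{cor1.2} and Theorem~\ref{thm1.3}: there $\lambda'=h/N^2>0$, and bounded $N$ is trivial.
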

 \begin{proof}
     The result is immediate if $\Delta=0$, so assume $\Delta\neq 0$ and set $\eta = |\Delta|/N^2$. Lemma \ref{lem5.3} shows that
     \begin{equation*}
         J(\lambda+\eta)  + O(\eta\log^2(2/\eta)) = \widetilde J_\eta(\lambda+\eta) = \int_{(\eta,1)^2} \frac{1}{\alpha \beta} \ \mu( (0, \beta] \cap (\lambda - (0, \alpha]  ) )\d \alpha \d \beta.
     \end{equation*}
     In view of the crude bound \eqref{eqn5.3} and the estimate
     \begin{equation*}
         \int_{(\eta,1)^2}\frac{\eta}{\alpha\beta}\d\alpha\d\beta \ll \eta\log^2(2/\eta),
     \end{equation*}
     the triangle inequality reveals that
     \begin{equation*}
         |J(\lambda+\eta) - J(\lambda)| \leqslant |J(\lambda+\eta) - \widetilde J_\eta(\lambda+\eta)| + |J(\lambda) - \widetilde J_\eta(\lambda)| +O(\eta\log^2(2/\eta)) \ll \eta\log^2(2/\eta).
     \end{equation*}
     A similar reasoning, combined with \eqref{eqn5.5}, leads to the bounds
     \begin{equation*}
         |J(\lambda-\eta) - J(\lambda)|,|K(\lambda+ \eta) - K(\lambda)|,|K(\lambda- \eta) - K(\lambda)|\ll \eta\log^2(2/\eta).
     \end{equation*}
     Recalling that $\eta\log^2(2/\eta) \ll \Delta N^{-2}\log^2(N)$ and $\sigma_\infty(\lambda) = 4J(\lambda) + 8K(\lambda)$ ends the proof.
 \end{proof}

We now present our proof of Corollary \ref{cor1.2}.

 \begin{proof}[Proof of Corollary \ref{cor1.2}]
     Fix some $\lambda\in[0,2)$. Proposition \ref{prop1.5} informs us that $0<\sigma_\infty(\lambda)\leqslant 48$. Writing $\Delta = h -\lambda N^2$, Theorem \ref{thm1.1} and \eqref{eqn1.4} show that
     \begin{equation*}
         T(h,N) = \sigma_{\infty}\left(\lambda + \frac{\Delta}{N^2}\right)\frac{N^2}{\zeta(2)}\sum_{d\mid h}\frac{1}{d} + O_{\eps}(N^{3/2 + \eps}).
     \end{equation*}
     The result now follows by applying Lemma \ref{lem7.1} to the above and invoking the divisor bound
     \begin{equation*}
         1\leqslant \sum_{d\mid h}\frac{1}{d}\ll_\eps N^\eps.\qedhere
     \end{equation*}
 \end{proof}

\section{Proof of Theorem \ref{thm1.3}}\label{sec8}

 In this section, we will present our proof of Theorem \ref{thm1.3}. Thus, let $h,N$ be positive integers and let $h = N^2 + \Delta$. We first consider the number of solutions to $ad - bc = h$ with $abcd = 0$ and $-N \leq a,b,c,d \leq N$. As in the proof of Theorem \ref{thm1.1}, we see that the number of such solutions is at most $O_{\eps}(N^{1 + \eps})$.  We will now show that the number of difference-type solutions, that is, the number of solutions satisfying $ad - bc = h$ with $1 \leq a,b,c,d \leq N$, is at most $O_{\eps}(N^{\eps}( 1 + |\Delta|)).$ Indeed,  suppose $ad - bc = h = N^2 + \Delta$ for some $a,b,c,d \in \{1,\dots, N\}$. This implies that 
 \[ a \geq ad/N \geq (N^2 + \Delta + bc)/N \geq N + \Delta/N. \]
 Since $a \in [N]$, we see that there are at most $O(1 + |\Delta|/N)$ many choices for $a$. Similarly, we have at most $O(1 + |\Delta|/N)$ many choices for $d$. Fixing such $a,d$, we see that $bc$ is fixed. Finally since we know that $bc \neq 0$, we can use the divisor estimate to fix $b,c$ in $O_{\eps}(N^{\eps})$ many ways. Thus, the number of difference solutions is at most
 \[ \ll_{\eps} N^{\eps} ( 1 + |\Delta|/N)^2 \ll N^{\eps} (1 + \Delta^2/N^2) \ll N^{\eps} (1 + |\Delta|), \]
 where the final step uses the fact that $|\Delta| = |h - N^2| \leq N^2$. 

 Hence, it suffices to estimate the number of additive-type solutions. We now follow our strategy described in \S\ref{sec3}. In particular, the main term here is 
\[ J(h/N^2) \zeta(2)^{-1}N^2\sum_{d|h}1/d. \]
Applying Lemma \ref{lem7.1}, we see that upon incurring a further cost of order $O_{\eps}(N^{\eps}|\Delta|)$, we may replace $J(h/N^2) = J(1 + \Delta/N^2)$ by $J(1)$. Substituting the value of $J(1)$ from Lemma \ref{lem5.5} into the above expression and noting \eqref{eqn7.1}, we find that our main term contribution to $T(h,N)$ is precisely
\[( 8\zeta(2)^{-1} - 4)N^2\sum_{d|h}1/d. \]

Thus, we now focus on the additive-type error terms. Recalling the definition of $U$ and $V$ in \eqref{eqn3.4}, we see that 
\[ U = \frac{a_0 - N}{v} \ \Leftrightarrow \ u \leq \frac{h}{dN} - \frac{v}{N} = \frac{N}{d} - \frac{v}{N} + \frac{\Delta}{dN} 
\]
and
\[ V = \frac{N- b_0}{u} \  \Leftrightarrow \ v \leq \frac{h}{dN} - \frac{u}{N} = \frac{N}{d} - \frac{u}{N} + \frac{\Delta}{dN} .  \]
Now note that $u, v \leq N/d$, and so, if $U \neq (a_0 - N)/v$, then we must have 
\[ N/d - v/N + \Delta/(dN) < u \leq N/d. \]
The contribution of these cases to the error term $\cE(h,N)$ in \eqref{eqn3.4} is at most
\begin{equation} \label{eqn8.1}
\leq \sum_{\substack{d|h \\ d \leqslant N}} \sum_{1 \leq v \leq N/d} \ \sum_{N/d - v/N + \Delta/(dN) < u \leq N/d} \  3 \ll  \sum_{\substack{d|h \\ d \leqslant N}} \sum_{1 \leq v \leq N/d} \left( \frac{\Delta}{dN} + \frac{v}{N} \right) \ll \Delta + N .
\end{equation}
We can similarly estimate the contribution of the terms when $V \neq (N - b_0)/u$. Thus, we may assume that $U = (a_0 - N)/v$ and $V = (N- b_0)/u$, in which case, $V \geq U$ is equivalent to the condition that $N(u + v) \geq h/d = N^2/d + \Delta/d$. Thus the contribution of these terms to $\cE(h,N)$ is 
\[ \sum_{\substack{d|h \\ d \leqslant N}} \sum_{\substack{1 \leq u,v \leq N/d,\\ (u,v) = 1, \\ u+ v \geq \frac{N}{d} + \frac{\Delta}{dN}}}(\1_{\mathbb{Z}}(U) - ( \psi(V) - \psi(U)) \1_{u \leq \frac{N}{d} - \frac{v}{N} + \frac{\Delta}{dN}} \1_{v \leq \frac{N}{d} - \frac{u}{N} + \frac{\Delta}{dN} }  . \]
As in the estimation of \eqref{eqn8.1}, we can remove the factors $\1_{u \leq \frac{N}{d} - \frac{v}{N} + \frac{\Delta}{dN}}$ and $\1_{v \leq \frac{N}{d} - \frac{u}{N} + \frac{\Delta}{dN} }$ from the above expression by incurring an error term of size $O(\Delta + N)$. Moreover, the contribution of the term $\1_{\mathbb{Z}}(U)$ to the above sum can be shown to be $O_{\eps}(N^{1 + \eps})$ exactly as in the proof of \eqref{eqn8.1}. Hence, it remains to show that
\begin{equation} \label{eqn8.2}
\sum_{\substack{d|h \\ d \leqslant N}} \sum_{\substack{1 \leq u,v \leq N/d,\\ (u,v) = 1, \\ u+ v \geq \frac{N}{d} + \frac{\Delta}{dN}}} \psi(V)   \ll N^{1 + \eps} \ \ \text{and} \ \ \sum_{\substack{d|h \\ d \leqslant N}} \sum_{\substack{1 \leq u,v \leq N/d,\\ (u,v) = 1, \\ u+ v \geq \frac{N}{d} + \frac{\Delta}{dN}}} \psi(U) \ll_{\eps} N^{1 + \eps} . 
\end{equation}
We will only record the proof of the first inequality, with the proof of the second bound following in the same vein with suitable modifications.

For our proof of the first inequality in \eqref{eqn8.2}, we will require the following lemma. 

\begin{lemma} \label{lem8.1}
   Let $\sigma >0$ be a parameter. For every $1 \leq u \leq N/d$, let $I_u$ be some interval such that $|I_u| = u + O(\sigma)$. Let $r_1, r_2 \leq N^{10}$ be some non-zero integers. Then 
    \[ \sum_{1 \leq u \leq N/d} \sum_{v \in I_u, (u,v) = 1} \psi((r_1 + \overline{v} r_2)/u) \ll  N^{1 + \eps}(1+ \sigma)/d .  \]
\end{lemma}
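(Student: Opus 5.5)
We follow the proof of Lemma \ref{lem6.3}, but replace the incomplete Kloosterman sum bound by an exact evaluation over complete residue systems. First, since $|I_u| = u + O(\sigma)$, we write $I_u = I_u^{(0)} \cup I_u^{(1)}$. Here $I_u^{(0)}$ is an interval of exactly $u$ consecutive integers (possibly empty if $|I_u|<u$), and $I_u^{(1)}$ is a union of at most two intervals of total length $O(1+\sigma)$. More precisely, $I_u^{(1)}$ is the symmetric difference between $I_u$ and a length-$u$ interval sharing an endpoint with it. Since $|\psi|\leq 1/2$, the contribution of $I_u^{(1)}$ is at most
\[ \sum_{1\leq u\leq N/d} O(1+\sigma) \ll (1+\sigma)N/d, \]
which is acceptable. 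It remains to treat the sum over complete residue systems $I_u^{(0)}$. On such a system, $v$ with $(u,v)=1$ runs over $(\Z/u\Z)^\times$ exactly once, and so does $\overline{v}$.

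For the complete sums, we apply Lemma \ref{lem2.2} with $Q = N^{100}$, exactly as in \eqref{eqn6.2}. The $O(\min\{1, 1/(Q\n{\cdot})\})$ remainder is handled verbatim as in \eqref{eqn6.3}, giving $O(N^{1+\eps}/d)$; that argument only used $|I_u|\ll N/d$. The main Fourier piece becomes
\[ \sum_{0<|s|\leq Q}\frac{e(sr_1/u)}{2\pi i s}\sum_{1\leq u\leq N/d}\ \sum_{t\in(\Z/u\Z)^\times} e(sr_2 t/u), \]
with $e(sr_1/u)$ pulled inside the $u$-sum. The innermost sum is a Ramanujan sum, so by \eqref{eqn2.2} its absolute value is at most $\sum_{e\mid (sr_2,u)} e \leq d((sr_2,u))\,(sr_2,u) \ll N^{\eps}(sr_2,u)$. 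Here we use the divisor bound, noting that $|sr_2|\leq N^{110}$.

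By the triangle inequality the Fourier piece is therefore
\[ \ll N^{\eps}\sum_{0<|s|\leq Q}\frac{1}{|s|}\sum_{1\leq u\leq N/d}(sr_2,u) \ll N^{\eps}\sum_{0<|s|\leq Q}\frac{1}{|s|}\cdot\frac{N}{d}(sr_2)^{\eps}, \]
where the second step is Lemma \ref{lem2.1} with $m = |sr_2|$ and $M = N/d$. Since $|sr_2| \leq N^{110}$ and $\sum_{|s|\leq Q}1/|s|\ll \log N$, this is $\ll N^{1+\eps}/d$ after renaming $\eps$. Collecting the three contributions gives $N^{1+\eps}(1+\sigma)/d$.

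The main point requiring care is the reduction to complete residue systems. We must check that a length-$u$ interval meets each class modulo $u$ exactly once, so that the coprimality condition turns the $v$-sum into a sum over $(\Z/u\Z)^\times$ with no boundary terms. We must also check that the bounded modification of $I_u$ costs only $O(1+\sigma)$ per $u$, rather than something depending on $N/d$. Both follow from elementary counting, and the rest is a routine adaptation of Lemma \ref{lem6.3}.
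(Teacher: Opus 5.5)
Your argument is correct and is essentially the paper's proof. It completes each $I_u$ to a full residue system modulo $u$ at a cost of $O(1+\sigma)$ per $u$, expands $\psi$ via Lemma~\ref{lem2.2}, treats the tail as in \eqref{eqn6.3}, and bounds the resulting Ramanujan sums using \eqref{eqn2.2} and Lemma~\ref{lem2.1}. There are two cosmetic differences: you trim $I_u$ before expanding $\psi$ (using $|\psi|\le 1/2$), whereas the paper trims inside the Fourier sum; and when $|I_u|<u$ your decomposition must be read as the signed identity $\sum_{I_u}=\sum_{I_u^{(0)}}-\sum_{I_u^{(0)}\setminus I_u}$, not as a union with $I_u^{(0)}$ empty, which your ``more precisely'' sentence already supplies.
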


\begin{proof}
    We proceed as in Lemma \ref{lem6.3} and consider the truncated Fourier decomposition as in \eqref{eqn6.2}. Noting \eqref{eqn6.3}, we see that the second term on the right hand side in \eqref{eqn6.2} is already $O_{\eps}(N^{1+ \eps}/d)$ and so, it suffices to analyse the first term. Since $|I_u| = u + O(\sigma)$, we see that upon adding or removing at most $\sigma$ many terms, we can make $I_u$ a complete interval modulo $u$. This means that the first term in the right hand side of \eqref{eqn6.2} satisfies
\[ \bigg|  \sum_{1 \leq u \leq N/d}   \sum_{\substack{v \in I_u, \\ (v,u) = 1}}  \sum_{0 < |s| \leq Q} \frac{e(s(r_1 + \overline{v}r_2)/u )}{2 \pi i s} \bigg| 
     \ll \sum_{1 \leq u \leq N/d}  \sum_{0 < |s| \leq Q} \frac{1}{s} \Big( \Big|\sum_{\substack{1 \leq v' \leq u,\\ (v',u) = 1} } e(s v' r_2/u) \Big|+ \sigma \Big). \]
   The contribution of the term $\sigma$ in the inner sum to the entire sum is at most $\sigma N^{1 + \eps}/d$, whence we can analyse the first term in the inner sum above. Applying \eqref{eqn2.2} and Lemma \ref{lem2.1}, we get that
\begin{align*}
   \sum_{1 \leq u \leq N/d}  \sum_{0 < |s| \leq Q} \frac{1}{s} \Big|\sum_{\substack{1 \leq v' \leq u,\\ (v',u) = 1} } e(s v' r_2/u) \Big| & \ll \sum_{1 \leq u \leq N/d}  \sum_{0 < |s| \leq Q} \frac{1}{s} \sum_{r| (u, sr_2) } r  \ll_{\eps} N^{\eps}  \sum_{1 \leq u \leq N/d}  \sum_{0 < |s| \leq Q} \frac{(u, sr_2)}{s}   \\
    & = N^{\eps} \sum_{0 < |s| \leq Q} \frac{1}{s} \sum_{1 \leq u \leq N/d} (u, sr_2)  \ll_{\eps}  N^{1 + \eps}/d .
\end{align*}
Thus, we are finished with the proof of Lemma \ref{lem8.1}.
\end{proof}

We now return to the proof of the first inequality in \eqref{eqn8.2}. The left hand side of this inequality can be written as
\[ \sum_{\substack{d|h \\ d \leqslant N}} \sum_{1 \leq u \leq N/d} \ \sum_{\substack{ \frac{N}{d} + \frac{\Delta}{dN}  - u \leq v \leq \frac{N}{d},\\ (u,v) = 1} }  \psi( (N- b_0)/u )  . \]
Note that for each fixed $1 \leq u \leq N/d$, we have $v$ lying in an interval of length $u + O(|\Delta|/dN)$. Applying Lemma \ref{lem8.1}, we see that the above sum is at most 
\[ \ll_{\eps} \sum_{\substack{d|h \\ d \leqslant N}} \frac{N^{1+ \eps}}{d} \left( 1 + \frac{|\Delta|}{dN} \right)  \ll_{\eps} N^{\eps}(N + |\Delta|),  \]
which is precisely the desired bound.

\end{document}